\newcommand{\inthom}{\operatorname{map}}
\newcommand{\sethom}{\hom}
\newcommand{\bk}{{\bar{k}}}
\newcommand{\bj}{{\bar{j}}}
\newcommand{\shuff}[1]{\mathcal{S}_{#1}}
\newcommand{\zh}{Z_{h\pi}}
\newcommand{\bw}{\overline{W}}
\newcommand{\coact}{\varrho}
\newcommand{\slush}[1]{\underline{#1}}
\newcommand{\inta}{{\zeta}}
\newcommand{\intalg}{\inta_{alg}}
\newcommand{\intb}{{\chi}}
\newcommand{\faked}{a}
\newcommand{\mess}{\ell}
\newcommand{\abut}{\iota}
\renewcommand{\exte}{u}
\newcommand{\algr}{\nu}
\newcommand{\booeta}{\epsilon} %CHANGE ME
\begin{document}
\begin{frontmatter}

\title{Spectral sequence operations converge to Araki-Kudo operations}
\author{Philip Hackney\fnref{fn}}
\ead{hackney@math.ucr.edu}
\address{Department of Mathematics, University of California, Riverside, 900 University Avenue,
Riverside, CA 92521, USA}
\fntext[fn]{Phone: 951-827-5402, Fax: 951-827-7314}

\begin{abstract} 
Previously we constructed operations in the mod 2 homology spectral sequence associated to a cosimplicial $E_\infty$-space $X$. The correct target for this spectral sequence is the homology of $\Tot X$. Noting that in this setting  $\Tot X$ is an $E_\infty$-space, we show that our operations agree with the usual Araki-Kudo operations in the target. We also prove that the multiplication in the spectral sequence agrees with the multiplication in $H_*(\Tot X)$.

\end{abstract}

\begin{keyword}
Araki-Kudo operation\sep cosimplicial space\sep spectral sequence 
\MSC 55S12 \sep 55T20
\end{keyword}

\end{frontmatter}

\section{Introduction}

Let $\msc$ be a fixed $E_\infty$ operad. In \cite{me1}, we gave a new construction of operations in the mod 2 homology spectral sequence of a cosimplicial $\msc$-space $X$. They are formally similar to (and probably equal to) those given by Turner in \cite{turner}, and take the form of `vertical' and `horizontal' operations:
\begin{align*}
Q^{m}:E^r_{-s,t} &\to E^r_{-s,m+t} & m&\geq t \\
Q^m:E^r_{-s,t} &\to E^w_{m-s-t,2t} & m&\in[t-s,t]\end{align*}
where $w\in [r,2r-2]$. The precise value of $w$ is not relevant for the current paper, since we are concerned with what happens in the limit $r=\infty$, and, in this setting, the operations may be given a unified form 
\[ Q^m: E^\infty_{-s,t} \to E^\infty_{-v,v-s+m+t} \]
where \begin{equation} v(t,s,m) = \begin{cases} s & m\geq t \\ t+s-m & t-s \leq m \leq t. \end{cases} \label{E:v}\end{equation}

In this context, $\Tot(X)$ is a $\msc$-space, which implies that the mod 2 homology $H_*(\Tot X)$ has Araki-Kudo operations. We can compare the operations on page $E^\infty$ of the spectral sequence with the usual Araki-Kudo operations using Bousfield's identification of $H_*(\Tot X)$ as the abutment of the homology spectral sequence (see \cite{bousfield}).

\begin{thm}\label{T:internal}
Suppose that $X$ is a cosimplicial $\msc$-space. Then the Araki-Kudo operation $Q^m$ has the following effect on the filtration:
\[ Q^m [ F^{-s} H_{t-s} (\Tot X)] \ci F^{-v}H_{t-s+m} (\Tot X), \]
where $v$ is as in \eqref{E:v}. 
Furthermore, for each $m$  the following diagram commutes:
\[ \xymatrix{
\left({F^{-s}}/{F^{-s-1}}\right) H_{t-s} (\Tot X) \ar@{->}[r]^-\abut \ar@{->}[d]^{Q^m} & E_{-s,t}^\infty (X) \ar@{->}[d]^{\ext^m}\\
\left( {F^{-v}}/{F^{-v-1}}\right) H_{t-s+m} (\Tot X) \ar@{->}[r]^-\abut & E_{-v, v-s+m+t}^\infty (X),
}\]
where the map on the right is the one developed in \cite{me1}.
\end{thm}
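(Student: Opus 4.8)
\emph{Proof proposal.} The plan is to obtain all three players --- the $\msc$-action on $\Tot X$, the target Araki--Kudo operation, and the spectral-sequence operation of \cite{me1} --- from the single map of cosimplicial $\msc$-spaces
\[ \mu_\bullet\colon DX:=\msc(2)\times_{\Sigma_2}X^{\times 2}\longrightarrow X \]
($X^{\times 2}$ the levelwise square), and to compare them by applying Bousfield's identification at once to $X$ and to $DX$, working with Reedy fibrant models throughout so that Bousfield's theorem applies. First I would record the formal preliminaries. Since $\Tot$ and each partial totalization $\Tot^p$ are limits commuting with finite products, they send $\msc$-spaces to $\msc$-spaces, turn $\cdots\to\Tot^p X\to\Tot^{p-1}X\to\cdots$ together with the maps $\Tot X\to\Tot^p X$ into a tower of $\msc$-spaces, and admit a natural assembly map $a\colon D(\Tot X)=\msc(2)\times_{\Sigma_2}(\Tot X)^{\times 2}\to\Tot(DX)$ --- which exists despite $\msc(2)\times_{\Sigma_2}(-)$ not commuting with limits, since we need only a map --- compatibly with the analogous $D(\Tot^p X)\to\Tot^p(DX)$; moreover the composite $\Tot(\mu_\bullet)\circ a$ is the structure map $\mu\colon D(\Tot X)\to\Tot X$ defining the $\msc$-action, so $Q^m z=\mu_*(D^m_z)$, where $D^m_z\in H_{t-s+m}(D(\Tot X))$ is the geometric Dyer--Lashof class of $z$, namely the image of $e_{m-t+s}\otimes z^{\otimes 2}$ under the extended-power structure ($e_j$ the bottom class of $H_j(B\Sigma_2)$, so that $e_0$ is the fibre point).

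The filtration claim for $m\geq t$ (where $v=s$) is then immediate: if $z\in F^{-s}H_{t-s}(\Tot X)$ dies in $H_{t-s}(\Tot^{s-1}X)$ then by naturality $Q^m z=Q^m(0)=0$ in $H_{t-s+m}(\Tot^{s-1}X)$, i.e.\ $Q^m z\in F^{-s}$. For $t-s\leq m\leq t$ we need the sharper $Q^m z\in F^{-v}$ with $v=t+s-m\geq s$. As $\mu=\Tot(\mu_\bullet)\circ a$ and $\Tot(\mu_\bullet)$ preserves the Tot-filtration (it is induced by a cosimplicial map), it suffices to show $a_*(D^m_z)\in F^{-v}H_{t-s+m}(\Tot(DX))$; and once we also identify $a_*(D^m_z)$ on the $E^\infty$-page of $DX$ and compare with \cite{me1}, pushing forward along $\Tot(\mu_\bullet)$ yields the commuting square as well. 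The theorem is thus reduced to a computation in the homology spectral sequence of $DX$.

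For that computation there are two inputs. The bottom case $m=t-s$: here $D^{t-s}_z$ is the image of $z\otimes z\in H_{2(t-s)}((\Tot X)^{\times 2})=H_{2(t-s)}(\Tot(X^{\times 2}))$ under the fibre inclusion $X^{\times 2}\to DX$; as $z\otimes z$ lies in $F^{-2s}$ (K\"unneth plus multiplicativity of the Tot-filtration) and is detected by $\abut(z)\otimes\abut(z)$, and the fibre inclusion is filtration-preserving, $a_*(D^{t-s}_z)$ lies in $F^{-2s}$ (and $v=2s$ when $m=t-s$) and is detected by the image of $\abut(z)^{\otimes2}$ --- this is exactly the multiplication comparison, the other main result of this paper, which may be invoked (or run in parallel by the same method without the $\Sigma_2$-quotient). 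For $t-s<m\leq t$ the class $D^m_z$ sits one step higher in the skeletal filtration of $B\Sigma_2$ per unit increase in $m$; to turn this into the stated Tot-filtration and to match $D^m_z$ with $\ext^m(\abut z)$, I would compare the skeletal filtration of $B\Sigma_2$ --- which controls the Araki--Kudo operations --- with the $\bw$-resolution used in \cite{me1} --- which controls the spectral-sequence operations --- by an equivariant chain equivalence (acyclic models: both are free $\Sigma_2$-resolutions of $\mathbb{F}_2$), transporting the combinatorial computation of \cite{me1}, including the value \eqref{E:v} and the way the two ranges fit together at $m=t$. Naturality of $\abut$ for these structures then delivers both $a_*(D^m_z)\in F^{-v}$ and that $a_*(D^m_z)$ represents $\ext^m(\abut z)$, and applying $\Tot(\mu_\bullet)_*$ finishes.

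The main obstacle I anticipate is precisely this last matching: aligning the geometric skeletal filtration of $B\Sigma_2$, which governs $Q^m$ in the target, with the combinatorial $\bw$-filtration of \cite{me1} governing $\ext^m$, and doing so compatibly with both the abutment isomorphism and the Tot-filtration, so that the case split in \eqref{E:v} drops out. The remaining steps are naturality, the formal behaviour of totalization, and the product comparison.
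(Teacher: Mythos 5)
Your overall reduction (factoring $Q^m z = \mu_*(D^m_z)$ with $\mu = \Tot(\mu_\bullet)\circ a$, so that the internal statement follows from a computation with the assembly map $a = \inta$ and the external spectral sequence of $DX$) matches the paper's architecture: the paper proves exactly this external statement, Theorem~\ref{T:convergence}, and deduces Theorem~\ref{T:internal} from it by naturality and the $\msc$-structure on $\Tot X$. However, your proposal for the external computation is essentially the proof sketch attributed to McClure that Section~\ref{S:turner} of the paper explicitly critiques, and it leaves the genuine technical work unaddressed.

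There are two concrete gaps. First, you invoke Reedy-fibrant replacement ``so that Bousfield's theorem applies,'' and implicitly so that $a_* = \inta_*$ becomes a homology isomorphism. But the spectral sequence operations $\ext^m$ from \cite{me1} are defined via the Bousfield--Kan universal examples $\K\bkspace_{(\infty,s,t)}$, which are not Reedy fibrant, and the abutment map $\abut$ and the filtration are defined at the chain level on $TCN\K X$ rather than via the partial totalizations of a fibrant model. You cannot both fibrantly replace and retain access to the universal example machinery that identifies $\ext^m$. The paper sidesteps this by never requiring $\inta$ to be a homology isomorphism for general $X$: it works with cosimplicial simplicial $\K$-modules, uses the algebraic interchange $\intalg$, establishes an $E^1$-isomorphism $E^\infty(W\tp NV^{\ten 2})\cong E^\infty(N(\K E\pi\tp V^{\ten 2}))$, and proves $\intalg$ is a homology isomorphism only for the universal example (Proposition~\ref{P:interchange}). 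Second, your ``acyclic models'' comparison between the skeletal filtration of $B\Sigma_2$ and the $\bw$-resolution is the heart of the matter and is left as an assertion. An equivariant chain equivalence between two free $\Sigma_2$-resolutions of $\K$ is easy; what is hard is showing that the interchange $\intalg$ sends the external class $e_{m-t+s}\ten v\ten v$ into the correct column of the Tot-filtration \emph{and} that the image agrees with $\ext^m(\abut z)$. The paper achieves this with a genuinely different tool: the coalgebra $H(B\pi) = \bw$ coacts on both $N(\K E\pi\tp(\Tot V)^{\ten 2})$ and $TCN(\K E\pi\tp V^{\ten 2})$ (Propositions~\ref{P:nzh}, \ref{P:nnzh}), the interchange is a comodule map (Theorem~\ref{T:comodules}), and cofreeness of the source in the universal case forces injectivity, hence isomorphism (Proposition~\ref{P:interchange}); the filtration assertion then falls out of Lemma~\ref{L:Hofhomotopyorbits}, not from any direct chain-level filtration comparison. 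None of this comodule machinery appears in your proposal, and without it the ``transport'' step does not go through. Finally, the bottom case $m = t-s$, which you say ``may be invoked,'' is itself a nontrivial result (Theorem~\ref{T:bottomop}) resting on the shuffle-combinatorics proof of Theorem~\ref{T:tensorproducts} in Section~\ref{S:Ptensor}.
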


The first statement along these lines appeared in \cite{turner}.
We also prove a similar statement relating the multiplication in the spectral sequence to the multiplication on $H_*(\Tot X)$ (Corollary~\ref{C:mult}). 
The results of this paper are actually slightly more general: they apply to external operations and external multiplication for arbitrary cosimplicial spaces. In particular, Theorem~\ref{T:internal} follows directly from our main theorem, \ref{T:convergence}, which is the external analog of \cite[5.11]{turner}. The reason for the present paper is not that the external version of this theorem is much harder to prove than the internal version, but rather that the sketch of a proof given in \cite{turner} turns out to need a lot more work (see section~\ref{S:turner}). This article is devoted to providing a complete proof.

Only minor changes are necessary to extend the results of this paper to the case when $X$ is a cosimplicial $E_{n+1}$ space. This is discussed in \cite{me2b}.

\subsection{Notation and Background}\label{S:notationbackground} We work over the field $\K=\Z/2$, and all modules should be interpreted as being $\K$-vector spaces unless otherwise mentioned. Let $\pi := \set{e, \sigma}$ be the group with two elements. For a set $Z$ with basepoint $*$, the notation $\K Z$ will always mean the free $\K$-module with basis $Z\setminus \set{*}$, so that $\K$ defines a functor $\Set_* \to \K\operatorname{Mod}$. If $Z$ is just a set, then $\K Z$ will be denote the free $\K$-module with basis $Z$.

In this paper, we take ``space'' to mean ``simplicial set.'' Important examples of simplicial sets are the simplices $\Delta^p = \hom_{\Delta}( -, [p])$, the classifying space $B\pi$, and the contractible space $E\pi$. The models we use for the latter two spaces are those from \cite[V.4]{gj}, with $B\pi_q = \pi^{\times q}$ and $E\pi_q = \pi^{\times (q+1)}$, although this specificity plays a role only beginning in section~\ref{S:comodules}.

\subsection{Normalization, conormalization, and the spectral sequence}

If $\msa$ is an abelian category, we write $N: \msa^{\Delta^{op}} \to \ch_{\geq 0}(\msa)$ for the normalization functor. 
Specifically, if $A\in \msa^{\Delta^{op}}$, we set \begin{align*}
N_p(A) &= \coker \left(\bigoplus_{k=0}^{p-1} s_k: \bigoplus A_{p-1} \to A_p \right) \\
\partial(a_p) &= \sum_{k=0}^{p} (-1)^k d_k(a).
\end{align*}
We also write 
\[ S_*: \operatorname{Spaces} = \Set^{\Delta^{op}} \overset{\K}{\to} (\K\operatorname{Mod})^{\Delta^{op}} \overset{N}{\to} \ch \]
for the mod 2 normalized chains functor, where $\ch = \ch_{\ge 0}(\K\operatorname{Mod})$ is the category of nonnegatively graded chain complexes over $\K$. A recurring example is $S_*(E\pi)$, which, using the model for $E\pi$ from the previous paragraph, is isomorphic to the complex $W$ with
\begin{align*} W_i &= \begin{cases} e_i \cdot \K \pi & i\geq 0 \\ 0 & i <0 \end{cases}\\
d(e_i) &= (1+\sigma) e_{i-1} \end{align*}
where $\K \pi$ is the group algebra.

\begin{defn}
Suppose that $A$ and $B$ are simplicial $\K$-modules. The tensor product is the simplicial $\K$-module with $(A\ten B)_n = A_n \ten B_n$, where $\ten = \ten_\K$. 
We will regularly use the following two natural transformations (see \cite[Corollaries VIII.8.6 and VIII.8.9]{homology}). 
The first is the 
Alexander-Whitney map
\begin{gather*}
AW:N(A\ten B) \to N(A) \ten N(B) \\
AW(a_n\ten b_n) = \sum_{p+q=n} d_{p+1} \cdots d_n a_n \ten d_0 \cdots d_{p-1} b_n
\end{gather*}
and the second is the 
shuffle map
\begin{gather*} \nabla: N(A) \ten N(B) \to N(A\ten B) \\
\nabla(a_p\ten b_q) = \sum_{\substack{ (p,q)-\text{shuffles}\\ \tau}} s_{\tau(p+q-1)} \cdots s_{\tau(p)} a_p \ten s_{\tau(p-1)} \cdots s_{\tau(0)} b_q,  \end{gather*}
where we consider $(p,q)$-shuffles as permutations of the set \[ \set{0,1,\dots, p+q-1}. \]
\end{defn}
A consequence of the Eilenberg-Zilber theorem and the K\"unneth theorem is that $AW$ and $\nabla$ are inverse chain homotopy equivalences.

Again assuming that $\msa$ is an abelian category, we write $C: \msa^\Delta \to \coch^{\geq 0} (\msa)$ for conormalization, with the convention that
\begin{equation} CY^p = \coker \left(\bigoplus_{k=1}^p d^k: \bigoplus Y^{p-1} \to Y^p \right). \label{E:conorm}\end{equation} 
If $\msa$ happens to be the category $\ch$ of chain complexes over $\K$, then we interpret $C$ as landing in the category of (second-quadrant) bicomplexes.
If $Y$ is a cosimplicial chain complex, the indexing is given by
\[ C(Y)_{-p,q} = C(Y_q)^p. \]

We will also need the cosimplicial versions of the Alexander-Whitney and shuffle maps. When we define them for cosimplicial chain complexes $Y$ and $Z$, we interpret them as maps of bicomplexes between $C(Y) \ten C(Z)$ and $C(Y\ten Z)$ by \cite[Lemma 4.1]{me1}.
\begin{defn}[{\cite[Appendix]{bauesmuro}}]\label{D:awshuff}
The Alexander-Whitney map $AW$ is defined on $C(Y)^p \ten C(Z)^q$ by
\[ AW(y^p\ten z^q) = d^{p+q} \cdots d^{p+1} y^p \ten d^{p-1} \cdots d^{0} z^q. \] The shuffle map $\nabla$ is defined on $C(Y\ten Z)^n$ by
\[ \nabla(y^n \ten z^n) = \sum_{p+q=n} \sum_{\substack{ (p,q)-\text{shuffles}\\ \tau}} s^{\tau(p)} \cdots s^{\tau(p+q-1)} y^n \ten s^{\tau(0)} \cdots s^{\tau(p-1)} z^n. \]

\end{defn}

Given a bicomplex $B$, we will let $TB$ denote the product total complex
\begin{equation} TB_m = \prod_{j} B_{j,m-j} \label{E:total}\end{equation} together with the filtration by columns:
\[ F^{k}_m = \prod_{j\leq k} B_{j,m-j}. \] 
We will write 
\begin{equation}
 T_\mess B := TB / F^{-(\mess + 1)}  \label{E:totalell} \end{equation} for the canonical quotients. The spectral sequences used in this paper are derived from this filtration. In particular, the spectral sequence associated to a cosimplicial chain complex $Y$ is, by definition, the spectral sequence associated to the above filtration for the bicomplex $B=CY$. Furthermore, the spectral sequence associated to a cosimplicial space $X$ is defined as the spectral sequence associated to the cosimplicial chain complex $S_* (X)$. 

\begin{notation}
If $C$ is a filtered chain complex with $F^{-s-1} \ci F^{-s}$, we will write 
\begin{align*}
Z_{-s,t}^r &= \setm{x\in F^{-s} C_{t-s}}{\partial x \in F^{-s-r} C} \\
B_{-s,t}^r &= \partial Z^{r-1}_{-s+r-1, t-r+2} + Z^{r-1}_{-s-1, t+1}
\end{align*} for the $r$-cycles and $r$-boundaries, and
$E_{-s,t}^r = Z_{-s,t}^r / B_{-s,t}^r$ for the $(-s,t)$ position of the $r^{\text{th}}$ page of the spectral sequence . 
\end{notation}

\subsection{Totalization and the abutment of the spectral sequence}\label{S:totalization}

We now give an indication as to why this spectral sequence approaches $H_*(\Tot X)$, following \cite{bousfield}. 
For a cosimplicial space $X$ and $\mess\leq \infty$ we have
\begin{equation} \Tot_\mess X := \inthom (\sk_\mess \Delta, X) \ci \prod_p \inthom(\sk_\mess \Delta^p, X^p). \label{E:Totell} \end{equation}
Here, for spaces $Z$ and $Z'$, $\inthom (Z,Z')$ is the mapping space with $\inthom(Z,Z')_q = \sethom(Z\times \Delta^q, Z')$. A $q$-simplex of $\Tot_\mess X$ should thus be taken as a sequence $(f^\bullet)=f$ with
\[ f^p: \sk_\mess \Delta^p \times \Delta^q \to X^p \] a map of spaces so that the diagram
\[ \xymatrix{
\sk_\mess \Delta^p \times \Delta^q \ar@{->}[r]^-{f^p} \ar@{->}[d]_{\sk_\mess \Delta^\alpha \times \Delta^q} & X^p \ar@{->}[d]^{X^\alpha} \\
\sk_\mess \Delta^{\bar{p}} \times \Delta^q \ar@{->}[r]^-{f^{\bar{p}}}  & X^{\bar{p}}
}\]
commutes for each $\alpha: [p] \to [\bar{p}]$.

We remark that if $V$ is a cosimplicial simplicial $\K$-module, then $(\Tot_\mess V)_q$ may be identified with $\hom(\K \sk_\mess \Delta \ten \K \Delta^q, V)$ using adjointness.
Considering the case $V=\K X$ for a cosimplicial space $X$,
there is a map
\begin{equation}
\begin{aligned} \Tot_\mess X &\to \Tot_\mess \K X \\
f&\mapsto \K f
\end{aligned}
\label{E:geominterchange} \end{equation}
 where
\[ \K f^p: \K \sk_\mess \Delta^p \ten \K \Delta^q = \K (\sk_\mess \Delta^p \times \Delta^q) \to \K X^p \]
is the induced map. 
This defines a natural transformation
\begin{equation}
\algr: \K \Tot X \to \Tot \K X \label{E:algr}
\end{equation}
which will appear frequently in later sections.
If $f \in (\Tot_\mess V)_q$, we will write \begin{equation} \slush{f} = T_\mess CN f : T_\mess CN (\K \sk_\mess \Delta^\bullet \ten \K \Delta^q) \to T_\mess CN V. \label{E:funderline} \end{equation}

We now recall, for $\mess\leq \infty$, the quasi-isomorphism \[ \phi_\mess: N \Tot_\mess V \to T_\mess CN V \]  from \cite[Lemma 2.2]{bousfield}.
Let 
\[ 0 \neq \imath_k \in N_k ( \K \Delta^k) \cong \K \]
be the class representing the top dimensional simplex of $\Delta^k$, and let 
 $\imath_k \wedge \imath_q$ be the image of 
\[ \imath_k \ten \imath_q \in N_k(\K \Delta^k) \ten N_q(\K \Delta^q) = N_k(\K \sk_\mess \Delta^k) \ten N_q(\K \Delta^q)\] 
under the shuffle map
\[ N(\K \sk_\mess \Delta^k) \ten N(\K \Delta^q) \overset{\nabla}{\to} %N[ \K \Delta^k \ten \K \Delta^q ] = 
N[ \K \sk_\mess \Delta^k \ten \K \Delta^q ] \hookrightarrow T_\mess CN [ \K \sk_\mess \Delta^\bullet \ten \K \Delta^q ] .\]
The map $\phi_\mess$ is defined by  \begin{equation} \phi_\mess (f) = \slush{f} \left[ \prod_{k=0}^\mess \imath_k \wedge \imath_q \right], \label{E:phiell} \end{equation}
and observe that 
\[ \phi_\mess(f) = \prod_{k=0}^\mess \slush{f}(\imath_k \wedge \imath_q) \] for $\mess \leq \infty$.

The composite
\[ S_* \Tot X = N \K \Tot X \to N \Tot (\K X) \overset{\phi_\infty}{\to} T CN \K X \] 
is used to define the filtration on $H_*(\Tot X)$. The filtration is given by
\[ F^{-s} = \ker(H \Tot X \to H TCN\K X \to H T_{s-1} CN\K X ),  \]
so $F^{-(s+1)} \ci F^{-s}$. This gives the abutment map
\begin{equation} \abut: (F^{-s} / F^{-s-1}) H (\Tot X) \hookrightarrow E_{-s}^\infty (X; \K) \label{SYMBOL:ABUTMENT} \end{equation}
from \cite[p.~364]{bousfield}.

\subsection{Totalization, products, and operad actions}\label{S:tpoa}

In this section we mention how the totalization of a cosimplicial $\msc$-space should be regarded as a $\msc$-space. Note 
that this section generalizes to other operads.

Suppose that $X$ and $X'$ are cosimplicial spaces and $\mess \leq \infty$. Using the diagonal \[ \sk_\mess \Delta^\bullet \times \Delta^q \to (\sk_\mess \Delta^\bullet \times \Delta^q) \times (\sk_\mess \Delta^\bullet \times \Delta^q),\] we have a function
\[ \Tot_\mess (X)_q \times \Tot_\mess(X')_q \to \Tot_\mess (X\times X')_q.\]
Since $\Tot_\mess(-)_q$ is a mapping space $\inthom(\sk_\mess \Delta^\bullet\times \Delta^q, -)$, this actually gives an isomorphism
\[ \Tot_\mess(X) \times \Tot_\mess(X') \cong \Tot_\mess (X\times X'). \]

If $Z$ is a space, we may consider the constant cosimplicial space $cZ$ with $cZ^p = Z$ for all $p$ and notice that $\Tot (cZ) = Z$. 
For a cosimplicial space $X$, we then have a composite of $\Sigma_n$-equivariant isomorphisms 
\begin{align*} \msc(n) \times \Tot(X)^{\times n} \to \Tot(c\,\msc(n)) \times \Tot(X)^{\times n} &\to \Tot( c\,\msc(n) \times X^{\times n}) \\ & = \Tot (\msc(n) \times X^{\times n}). \end{align*}
Now suppose that $X$ is a cosimplicial $\msc$-space. To define the $\msc$ structure on $\Tot(X)$, combine the above isomorphism with the totalization of the map of cosimplicial spaces
$ \msc(n) \times X^{\times n} \to X$
to obtain the map
\[ \msc(n) \times \Tot(X)^{\times n} \to \Tot (\msc(n) \times X^{\times n}) \to \Tot(X). \]
One can check that these structure maps make $\Tot(X)$ into an algebra over $\msc$.

\subsection{Araki-Kudo operations}\label{S:arakikudo}

There are two steps needed to define, in the style of \cite{may}, the usual Araki-Kudo operations for a $\msc$-space $Z$. %The first step is to define, 
For any chain complex $C$, we have graded (non-additive) functions of degree $m$ 
\[ q^m: C \to W\tp (C\ten C), \] 
defined the formula
\[ c \mapsto e_{m-|c|} \ten c \ten c + e_{m-|c|+1} \ten c \ten \partial c, \]
which we call \emph{external operations}. 
Notice that $q^m \partial = \partial q^m$ and that $q^m$ induces a homomorphism in homology.
When $C=S_*(Z)$, we have the chains of the $\msc(2)$-structure map
\[ S_* (\msc(2) \times_\pi (Z \times Z) ) \to S_*(Z), \]
which we connect to the external operations via
\[ W\tp (S_*(Z) \ten S_*(Z)) \to S_*(\msc(2)) \tp (S_*(Z) \ten S_*(Z)) \overset\nabla\to S_*( \msc(2) \times_\pi (Z\times Z)). \]
This composite is a quasi-isomorphism since  $S_*(\msc(2)) \simeq_\pi W$ and the shuffle map $\nabla$ is a $\pi$-equivariant quasi-isomorphism.
The composite
\[ H_*(Z) \overset{q^m}{\longrightarrow} H_{*+m} (\msc(2) \times_\pi (Z\times Z)) \to H_{*+m} (Z) \] is then an Araki-Kudo operation. The original paper \cite{kudoaraki} does not mention that the operations factor through the homology of the homotopy orbits of $Z\times Z$, but this is how they are constructed in \cite{may}.

\subsection{Bousfield-Kan universal examples}\label{S:universality}

\newcommand{\cof}{\operatorname{cof}}

We will need certain small cosimplicial spaces later in the paper.
The Bousfield-Kan universal examples (introduced in \cite{bk}) are cosimplicial simplicial pointed sets defined, for $t\geq s$, by
\[ \bkspace_{(\infty,s,t)} := \Sigma^{t-s} \cof \left( \sk_{s-1} \Delta_+^\bullet \to  \Delta_+^\bullet \right), \]
where $\Sigma$ is the Kan suspension and $\Delta_+^\bullet$ is obtained by adding a disjoint basepoint to %each cosimplicial degree of 
the standard cosimplicial simplicial set $\Delta^\bullet$. 
The $E^1$ page of the spectral sequence associated to $\bkspace_{(\infty,s,t)}$ consists of a single non-zero class in bidegree $(-s,t)$.
% For $r\geq 2$, pages $E^2$ through $E^r$ of the spectral sequence associated to $\bkspace_{(r,s,t)}$ are given in figure~\ref{F:uebk}. 
If $V$ is a cosimplicial simplicial $\K$-module (the main class of examples are given by  $V=\K X$, where $X$ is a cosimplicial space) and $x$ is an infinite cycle with $[x] \in E_{-s,t}^\infty(V)$, then $x$ determines a map of cosimplicial simplicial modules $\K \bkspace_{(\infty,s,t)} \to V$ so that on $E^\infty$ the unique nonzero class $
\imath$ maps to   $ [x]. $
This universal property is given in \cite{bk} for $\Z$ instead of $\K$, or one may use \cite[Proposition \propuniversalprop]{me1} and the fact that normalization is an equivalence of categories by the Dold-Kan correspondence.
See also figure~\ref{F:inftyconormed}.

% \begin{figure}[ht]
% \centering
% \begin{tikzpicture}
%     %Axes
%     \draw[->] (0.2,0) -- (-4,0);
%     \draw[->] (0,-0.2) -- (0,3);
%     %Horizontal ticks
%     \draw (-1,2pt) -- (-1,-2pt) node[below] {$-s$};
%     \draw (-3,2pt) -- (-3,-2pt) node[below] {$-(s+r)$};
%     %Vertical Ticks
%     \draw (2pt,1) -- (-2pt,1) node[right] {$t$};
%     \draw (2pt,2) -- (-2pt,2) node[right] {$t+r-1$};
%     %Points
%     \fill (-1,1) circle (2pt) node[right] {$\imath$};
%     \fill (-3,2) circle (2pt) node[left] {$\ssd^r(\imath)$};
%     %Arrow
%     \draw [->] (-1.2,1.1) -- (-2.8,1.9);
% \end{tikzpicture}
% \caption{Spectral Sequence for $\bkspace_{(r,s,t)}$}\label{F:uebk}
% \end{figure}
% The important case for the present paper is when $r=\infty$, and in that case the s

\subsection{Spectral sequence operations}\label{S:sso} We briefly recall, from \cite{me1}, how to define external operations in the spectral sequence of a cosimplicial space $X$ at $E^\infty$. We actually show how to do it for a cosimplicial simplicial module $V$. The spectral sequence associated to $\K E\pi \tp (\K \bkspace_{(\infty, s, t)})^{\ten 2}$ has $E^2 = E^\infty$, and takes the form of figure~\ref{F:e2bkinf}, where each lattice point on a solid line is a copy of $\K$ and each other lattice point is zero \cite[Theorem 7.1]{me1}. 
\begin{figure}[ht] 
%\centering
% \begin{tikzpicture}
%     %Axes
%     \draw[->] (0.2,0) -- (-5,0);
%     \draw[->] (0,-0.2) -- (0,5);
%     %Horizontal ticks
%     \draw (-1,2pt) -- (-1,-2pt) node[below] {$-s$};
%     \draw (-4,2pt) -- (-4,-2pt) node[below] {$-2s$};
%     %Vertical Ticks
%     \draw (2pt,1) -- (-2pt,1) node[right] {$2t$};
%     %Points
% %    \fill (-4,1) circle (1.5pt);
% %    \fill (-1,1) circle (1.5pt);
%     %Arrow
%     \draw [very thick] (-4,1) -- (-1,1);
%     \draw [->,very thick] (-1,1) -- (-1,5);
% \end{tikzpicture}  
\centering \scalebox{0.75} {
\begin{tikzpicture}	
	\draw (2.2,0) -- (-5,0);		
    \draw[->] (2,-0.2) -- (2,5);
    \draw (-1,2pt) -- (-1,-2pt) node[below] {$-s$};
    \draw (-4,2pt) -- (-4,-2pt) node[below] {$-2s$};
    \draw (1.9,1) -- (2.1,1) node[right] {$2t$};
    \draw [very thick] (-4,1) -- (-1,1);
    \draw [->,very thick] (-1,1) -- (-1,5);
\end{tikzpicture} }
\caption{$E^2( \K E\pi \tp (\K \bkspace_{(\infty,s,t)} \ten \K\bkspace_{(\infty,s,t)}))$}\label{F:e2bkinf}
\end{figure}
Write $\exte_{p,q}$ for the nonzero element in bidegree $(p,q)$ when there is one. We consider these nonzero elements $\exte_{p,q}$ of $E^\infty(\K E\pi \tp (\K \bkspace_{(\infty, s, t)})^{\ten 2})$ as the images of the external operations applied the element $\imath$ in the spectral sequence for $\K \bkspace_{(\infty, s, t)}$. Now, if $V$ is another cosimplicial simplicial module and we are given an infinite cycle $v\in Z^\infty_{-s,t}(V)$,  let $R_v: \K \bkspace_{(\infty, s, t)} \to V$ be the representing map so that $E^\infty(R_v)(\imath) = [v]$. By naturality, we should define the images of the operations on $[v]$ as the elements \[ E^\infty (1 \ten R_v^{\ten 2}) (\exte_{p,q}) \in E^\infty(\K E\pi \tp V^{\ten 2}). \]
This is, in fact, how we define the operations in \cite{me1}. 
% \begin{rem}
% All external operations on $\imath \in E^\infty_{-s,t}(\bkspace_{(\infty, s,t)})$ are nontrivial.
% \end{rem}

\section{Main Theorem and Outline of Proof}

For the remainder of the paper we will work externally (see section~\ref{S:arakikudo}) and usually do not assume that any spaces are $\msc$-spaces.  For a space $Z$ and each $m$, there is an external operation \[ q^m: H_*(Z) \to H_{*+m}(E\pi \times_\pi Z^{\times 2}) \]
given on the chain level by
\begin{align*} S_* (Z) &\overset{q^m}\to W_* \tp (S_* (Z) \ten S_* (Z)) \to S_* (E\pi \times_\pi (Z\times Z)).
\end{align*}
In particular, if we have a cosimplicial space $X$ we can take $Z=\Tot X$ and so we have
\[ q^m: H_*(\Tot X) \to H_{*+m} (E\pi \times_\pi (\Tot X)^{\times 2}). \] Furthermore, there is an interchange map \eqref{E:interchange}
\[ \inta: E\pi \times_\pi (\Tot X)^{\times 2} \to \Tot (E\pi \times_\pi X^{\times 2}) \] which is often a homology isomorphism (for instance when $X$ is Reedy-fibrant -- a fact we shall not need and hence shall not prove here). Thus we consider the composite operations
\[ Q^m: H_* (\Tot X) \to H_{*+m} (\Tot (E\pi \times_\pi X^{\times 2})) \]
to be the classic external Araki-Kudo operations. Since $\inta$ often induces an isomorphism in homology, this is a minor variant of the constructions in \cite{may}.

The comparison will be written in terms of the abutment map \eqref{SYMBOL:ABUTMENT}, which is a monomorphism
\[ \abut: (F^{-s} / F^{-s-1}) H_{t-s} (\Tot X) \hookrightarrow E_{-s,t}^\infty (X; \K) = E_{-s,t}^\infty (S_*(X)). \]

\begin{thm}\label{T:convergence} Suppose that $X$ is a cosimplicial space. Then
\[ Q^m [ F^{-s} H_{t-s} (\Tot (X))] \ci F^{-v}H_{t-s+m} (\Tot(E\pi \times_\pi X^{\times 2})) \]
where \[ v(t,s,m) = \begin{cases} s & m\geq t \\ t+s-m & t-s \leq m \leq t. \end{cases} \]Furthermore, for each $m$  the following diagram commutes:
\[ \xymatrix{
\left({F^{-s}}/{F^{-s-1}}\right) H_{t-s} (\Tot X) \ar@{->}[r]^-\abut \ar@{->}[dd]^{Q^m} & E_{-s,t}^\infty (S_*(X)) \ar@{->}[d]^{\ext^m} \\
& E_{-v, v-s+m+t}^\infty (W\tp S_*(X)^{\ten 2}) \ar@{->}[d]^{\cong}_{E^1 - iso} \\
\left( {F^{-v}}/{F^{-v-1}}\right) H_{t-s+m} (\Tot (E\pi \times_\pi X^{\times 2})) \ar@{->}[r]^-\abut & E_{-v, v-s+m+t}^\infty (S_*(E\pi \times_\pi X^{\times 2})).
}\]
%where the horizontal maps are those given by Bousfield.
\end{thm}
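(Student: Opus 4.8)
The plan is to reduce the statement to a single Bousfield--Kan universal example by naturality, and there to compute on the chain level, keeping track of the column filtration by hand. All the constructions involved — the external operation $q^m$, the interchange $\inta$, the transformation $\algr$, Bousfield's comparison $\phi_\infty$, the abutment $\abut$, and (by its very construction in \cite{me1}) the spectral-sequence operation $\ext^m$ — are natural, and $Q^m$ is additive in homology since for cycles $c,c'$ the cross term $e_{m-|c|}\ten(1+\sigma)(c\ten c')$ in $W\tp(C\ten C)$ is a $\partial_W$-boundary. Working at the level of cosimplicial simplicial modules via $\algr$, so that the module-level universal property of $\bkspace_{(\infty,s,t)}$ applies, it therefore suffices to treat $X=\bkspace_{(\infty,s,t)}$ together with the class $\imath_{\Tot}\in F^{-s}H_{t-s}(\Tot\bkspace_{(\infty,s,t)})$ abutting to the $E^\infty$-generator $\imath$; the passage back to a general $X$ along the representing maps $\K\bkspace_{(\infty,s,t)}\to\K X$ is then routine, the filtration statement following from naturality and additivity.

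Next, fix $\bkspace:=\bkspace_{(\infty,s,t)}$ and a cycle $c\in S_*(\Tot\bkspace)_{t-s}$ representing $\imath_{\Tot}$. As $\partial c=0$ the chain-level external operation gives $q^m(c)=e_{m-t+s}\ten c\ten c$; pushing this forward along the shuffle maps into $S_*(E\pi\times_\pi(\Tot\bkspace)^{\times 2})$, then along $S_*(\inta)$, then along $\phi_\infty\algr$ yields a cycle $z$ of total degree $m+t-s$ in $TCN(\K E\pi\tp(\K\bkspace)^{\ten 2})$, which via the cosimplicial shuffle ($E^1$-isomorphism) I regard inside $W\tp(TCN\K\bkspace)^{\ten 2}$. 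The key structural observation — immediate from $\phi_\infty(f)=\underline f[\prod_k\imath_k\wedge\imath_q]$, since $\imath_k\wedge\imath_q$ sits in bidegree $(-k,k+q)$ and $\underline f$ preserves bidegree — is that $z$ has a component in \emph{every} column $-k$, $k\ge 0$, lying in bidegree $(-k,\,k+m+t-s)$. It is precisely this spreading that allows the filtration of the resulting class to land above the ``naive'' column $-2s$ that $e_{m-t+s}\ten c\ten c$ by itself would suggest.

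Confronting $z$ with the $E^2=E^\infty$ page of figure~\ref{F:e2bkinf} (transported along the $E^1$-isomorphism to $E\pi\times_\pi\bkspace^{\times 2}$), whose nonzero positions are $(p,2t)$ for $-2s\le p\le-s$ and $(-s,q)$ for $q\ge 2t$, one sees that along the anti-diagonal $k\mapsto(-k,\,k+m+t-s)$ the only value of $k$ meeting a nonzero position is $k=v$: bidegree $(-v,2t)$ on the horizontal segment when $t-s\le m\le t$, and $(-s,m+t)$ on the vertical ray when $m\ge t$. In every other column the relevant bidegree is $E^1$-acyclic, so the usual staircase replaces $z$ by a homologous cycle supported in columns $\le -v$ whose column-$(-v)$ part is a non-bounding $E^1$-cycle; its class is then forced to be the generator $\exte_{-v,v-s+m+t}$. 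This gives $Q^m\imath_{\Tot}\in F^{-v}$ and $\abut(Q^m\imath_{\Tot})=\exte_{-v,v-s+m+t}=\ext^m\imath$, i.e. commutativity of the square on the universal example.

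The main obstacle is the computational heart of the second paragraph together with the non-bounding claim of the third: one must actually evaluate $\phi_\infty\algr\circ S_*(\inta)\circ(\text{shuffles})\circ q^m$ far enough — and $\pi$-equivariantly — to identify the column-$(-v)$ component of $z$ with the generator rather than with a boundary. Here $\phi_\infty$ is only a quasi-isomorphism (it is not monoidal, much less $\pi$-equivariantly so), and $\inta$ is not even a quasi-isomorphism, so the naive compatibility square between the geometric operation on $\Tot$ and the algebraic external operation on $TCN$ genuinely fails to commute — indeed the algebraic operation already vanishes on $\imath$ for $m>t-s$, so there is no shortcut. Assembling the required $\pi$-equivariant homotopies from the explicit Alexander--Whitney and shuffle formulas, controlling their effect on the column filtration, and checking convergence in the product total complex $TCN$, is the bulk of the argument, and is exactly the point at which the sketch of \cite{turner} needs to be filled in (section~\ref{S:turner}).
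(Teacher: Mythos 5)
Your proposal correctly identifies the reduction to the universal example $U=\K\bkspace_{(\infty,s,t)}$ by naturality and additivity, and correctly reads off the target bidegree $(-v,v-s+m+t)$ from the shape of $E^\infty(\K E\pi\tp(\K\bkspace)^{\ten 2})$. But the computational step on which everything hinges — that after the staircase argument the column-$(-v)$ component of $z$ is a \emph{non-bounding} $E^1$-cycle, forcing the class to equal the generator $\exte_{-v,v-s+m+t}$ — is asserted, not proven. Indeed your fourth paragraph concedes that this is ``the bulk of the argument'' and leaves it undone, which means the proposal as written is a plan of attack rather than a proof: nothing rules out that the column-$(-v)$ component produced by the staircase is a boundary, in which case $Q^m\imath_{\Tot}$ would land in $F^{-v-1}$ and the diagram would fail.

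The paper circumvents exactly this computation. Rather than chasing $\phi_\infty\circ N(\intalg)\circ\nabla\circ q^m$ column by column, it proves that $H(\intalg):H(\K E\pi\tp(\Tot U)^{\ten 2})\to H(\Tot(\K E\pi\tp U^{\ten 2}))$ is an isomorphism (Proposition~\ref{P:interchange}). This is done by endowing both sides with an $H(B\pi)$-comodule structure (Propositions~\ref{P:nzh} and~\ref{P:nnzh}), showing $\phi_\infty N(\intalg)$ is a map of comodules (Theorem~\ref{T:comodules}), observing that the source is a cofree comodule on its bottom class $e_0\ten v\ten v$, and checking that this cogenerator maps to a nonzero element (Proposition~\ref{P:tensortocommtensor}). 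Injectivity is then forced by cofreeness, and isomorphism follows from finite-dimensionality. Once $H(\intalg)$ is known to be an isomorphism, $Q^m$ factors as a nontrivial map followed by an isomorphism, hence is nontrivial; since every relevant graded piece is one-dimensional and the filtration quotients are completely described (Lemmas~\ref{L:totV},~\ref{L:Hofhomotopyorbits}, Proposition~\ref{P:filtrquotients}), the diagram is forced to commute. This is a conceptual argument that replaces the chain-level bookkeeping you flag as the obstruction; it also explains why the paper separates out the $m=t-s$ case via the multiplication theorem (Theorem~\ref{T:bottomop}), which your sketch doesn't need to do but which the paper needs in order to get the nontriviality of the cogenerator's image. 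If you want to carry out your direct approach, you will essentially end up re-proving the comodule compatibility of Theorem~\ref{T:comodules}, because the $\pi$-equivariant homotopies you mention are precisely what that theorem packages.
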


% \begin{cor}
% Theorem~\ref{T:internal} holds.
% \end{cor}

The main theorem is then a corollary of this theorem.
\begin{proof}[Proof of Theorem~\ref{T:internal}]
This follows from the definition of the $\msc$-structure on $\Tot X$ given in section~\ref{S:tpoa}, naturality, and Theorem~\ref{T:convergence}.
\end{proof}

%This theorem implies theorem~\ref{T:internal} because we 

The remainder of the paper will be dedicated to the proof of Theorem~\ref{T:convergence}. Let us describe the  strategy. We will first define `algebraic' operations for a cosimplicial simplicial $\K$-module $V$, which are maps
\[ H \Tot (V) \to H \Tot (\K E\pi \tp V^{\ten 2}), \] and show that the map
\[ \algr: \K \Tot (X) \to \Tot (\K X) \] takes the usual `geometric' operations to these algebraic operations. This passage from geometric to algebraic operations is fairly straightforward and constitutes section~\ref{S:geomtoalg}. Notice that the main result from this section, Proposition~\ref{T:geomtoalg}, does not mention filtration quotients. We will deduce the behavior of $Q^m$ with respect to filtrations much later, in section~\ref{S:secondconv}.

More difficult is the passage from these algebraic operations to operations we have previously defined on the spectral sequence (see section~\ref{S:sso} and \cite{me1}). Part of the difficulty  is that we do not, in most cases, have formulas for the $\ext^m$ in the spectral sequence. Luckily, since we are working with cosimplicial simplicial $\K$-modules instead of just cosimplicial spaces, we can use the universal property of $\K \bkspace_{(\infty, s ,t)}$.

That said, we will not use these universal examples when showing convergence for $Q^{t-s}$. This bottom operation is the squaring operation by \cite[Proposition 10.5]{me1}, so convergence of the bottom operation is a special case of convergence of multiplication. This is the topic of section~\ref{S:tensorproducts}; our main multiplication theorems are Theorem~\ref{T:externalmult} and Corollary~\ref{C:mult}, while Theorem~\ref{T:bottomop} is specifically about the bottom operation and is an essential component in the proof of Theorem~\ref{T:convergence}.

We prove algebraic convergence for the higher external operations first in the case of the cosimplicial simplicial modules $\K \bkspace_{(\infty, s,t)}$. This implies the general case since $\K \bkspace_{(\infty, s, t)}$ is universal for elements in $E_{-s,t}^\infty$.
For any cosimplicial simplicial module $V$, we will see that both $N(\K E\pi \tp (\Tot V)^{\ten 2})$ and $TCN\left(\K E\pi \tp  V^{\ten 2}\right)$ are comodules over $S_*(B\pi)$ (see Propositions~\ref{P:nzh} and \ref{P:nnzh}). It follows that $H(\K E\pi \tp (\Tot V)^{\ten 2})$ and $H(\Tot (\K E\pi \tp V^{\ten 2}))\cong HTCN\left(\K E\pi \tp  V^{\ten 2}\right)$ are comodules over $H(B\pi)$. We will show (Theorem~\ref{T:comodules}) that the interchange map $\K E\pi \tp (\Tot V)^{\ten 2} \to \Tot (\K E\pi \tp V^{\ten 2})$ induces a map of $H(B\pi)$-comodules in homology. 

We use this fact in the special case of the cosimplicial simplicial module $\K \bkspace_{(\infty, s,t)}$. In this case, it turns out that $H\left(\K E\pi \tp \Tot (\K \bkspace_{(\infty, s ,t)})^{\ten 2}\right)$ is cofree over $H(B\pi)$, which allows us to show that the interchange map 
\[ H(\intalg): H\left(\K E\pi \tp \Tot (\K \bkspace_{(\infty, s ,t)})^{\ten 2}\right) \to H\left(\Tot \left(\K E\pi \tp  (\K \bkspace_{(\infty, s ,t)})^{\ten 2}\right)\right) \]
from \eqref{E:algint} is an injection. 
Other calculations tell us that these modules are isomorphic (Lemma~\ref{L:Hofhomotopyorbits}), but not that this map is an isomorphism. 
Everything is finite dimensional, so this establishes Proposition~\ref{P:interchange}, which states that the above injection is an isomorphism.

Finally, we have a complete description of the filtration quotients for
$H_{t-s} (\Tot \K \bkspace_{(\infty, s ,t)})$ and $H_{t-s+m} (\Tot (\K E\pi \tp (\K \bkspace_{(\infty, s ,t)})^{\ten 2}))$
(see Proposition~\ref{P:filtrquotients}). As remarked in section~\ref{S:sso}, all spectral sequence operations on $\imath \in E^\infty_{-s,t} (\bkspace_{(\infty, s, t)})$ are nontrivial, from which we deduce Theorem~\ref{T:convergence} in this special case. The general result then follows from universality and naturality.

\subsection{A note on the proof sketch \texorpdfstring{of 
\cite[5.11]{turner}}{in Turner
}}\label{S:turner}
We briefly mention several places where complications arise when one tries to fill out this proof sketch, which was attributed to McClure. 
%In order to be explicit, we use line numbers which refer to those on \cite[p.~3834]{turner}, while notation will come from the present paper.
It is implied that one may (up to homology) interchange the functors $\Tot$ and $X\mapsto E\pi \times_\pi X^{\times 2}$. It seems that one may need to restrict to cosimplicial spaces which are both cofibrant and fibrant (in either the Reedy or the projective model structures), a restriction which would exclude the Bousfield-Kan examples $\bkspace_{(\infty, s,t)}$. In that special case we prove an algebraic version in Proposition~\ref{P:interchange}.

Care should be taken in distinguishing the homotopy and homology spectral sequences; for instance, the spaces $\bkspace_{(\infty, s,t)}$ are not universal for elements in the homotopy spectral sequence as asserted (see \cite[\S 8.4]{bk} for a note on the complications). Rather, this is only true if one is working with cosimplicial simplicial groups and applies the free group functor to $\bkspace_{(\infty, s,t)}$ (or the free abelian group functor, if working in the abelian setting). See \cite[\S 6.1]{bk}. 

%It is stated on line 9 that the spectral sequence associated to $\bkspace_{(\infty, s,t)}$ converges, which does not seem to follow from the results of \cite{bousfield}. 
Finally, the author found (5.4) somewhat difficult to compute; the interested reader may find this calculation in \cite{me1}.

%%%%%%%%%%%%%%%%%%%%%%%%%%%%%%%%%%%%%%%%%%%%%%%%%%
%%%%%%%%%%%%%%%%%%%%%%%%%%%%%%%%%%%%%%%%%%%%%%%%%%
\section{Geometric to Algebraic}\label{S:geomtoalg}
%%%%%%%%%%%%%%%%%%%%%%%%%%%%%%%%%%%%%%%%%%%%%%%%%%
%%%%%%%%%%%%%%%%%%%%%%%%%%%%%%%%%%%%%%%%%%%%%%%%%%

Let $X$ be a cosimplicial space. We now pass from geometric operations on $\Tot X$ to algebraic operations on $\Tot \K X$.

We first describe two interchange maps. The first is a geometric interchange map
\begin{equation} \inta: E\pi \times_\pi (\Tot X)^{\times 2} \to \Tot (E\pi \times_\pi X^{\times 2}), \label{E:interchange} \end{equation} where $X$ is a cosimplicial space. Consider $(e,f,g)$ a $q$-simplex of $E\pi \times_\pi (\Tot X)^{\times 2}$, so that
\begin{align*}
e:& \Delta^q \to E\pi \\
f:& \Delta^\bullet \times \Delta^q \to X \\
g:& \Delta^\bullet \times \Delta^q \to X. \\
\end{align*}
We form $\inta(e,f,g)$ by using the iterated diagonal on $\Delta^\bullet \times \Delta^q$ and regarding $E\pi$ as a constant cosimplicial space. Formulaically it is given as
\begin{align*}
\inta(e,f,g): \Delta^\bullet \times \Delta^q &\to E\pi \times_\pi X^{\times 2} \\
(\faked_1, \faked_2) & \mapsto (e(\faked_2), f(\faked_1, \faked_2), g(\faked_1, \faked_2)).
\end{align*}
Similarly, if $V$ is a cosimplicial simplicial $\K$-module, we have an algebraic interchange map
\begin{equation} \intalg: \K E\pi \tp (\Tot V)^{\ten 2} \to \Tot (\K E\pi \tp V^{\ten 2}) \label{E:algint}\end{equation} which is given on an elementary tensor $e\ten f \ten g$ by
\[ (\faked_1, \faked_2) \mapsto e(\faked_2) \ten f(\faked_1, \faked_2) \ten g(\faked_1, \faked_2), \]
where $e$ is an element of $E\pi$.
%The passage from geometric to algebraic we are talking about is from operations on $\Tot X$ to operations on $\Tot \K X$, where $X$ is a cosimplicial space. 

\begin{lem}\label{L:geomalgint}
The following diagram commutes: 
\[ \xymatrix{ S_*(E\pi \times_\pi  (\Tot X)^{\times 2}) \ar@{->}[r] \ar@{->}[d]^{S_*\inta} & N (\K E\pi \tp (\Tot \K X)^{\ten 2}) \ar@{->}[d]^{N\intalg} \\
S_*(\Tot(E\pi \times_\pi X^{\times 2})) \ar@{->}[r] & N (\Tot (\K E\pi \tp (\K X)^{\ten 2})). }\]
\end{lem}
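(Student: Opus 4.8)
The plan is to prove commutativity by tracing a generic $q$-simplex $(e,f,g)$ of $E\pi \times_\pi (\Tot X)^{\times 2}$ around both paths of the square and checking the two outputs agree at the level of normalized chains. The horizontal maps are, on the top, the composite $S_*(E\pi \times_\pi (\Tot X)^{\times 2}) = N\K(E\pi \times_\pi (\Tot X)^{\times 2}) \to N(\K E\pi \tp (\Tot \K X)^{\ten 2})$ built from $\algr \colon \K \Tot X \to \Tot \K X$ (together with the natural maps $\K(A\times_\pi B) \to \K A \tp_\pi \K B$ and $\K(\Tot X)^{\ten 2}\to(\Tot\K X)^{\ten 2}$), and, on the bottom, the analogous map obtained by applying $\K$ and $\algr$ to the cosimplicial space $E\pi\times_\pi X^{\times 2}$. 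So the first step is to unwind both horizontal maps explicitly on the basis element $[(e,f,g)]$: the top-right route sends it to (the class of) $e \ten \K f \ten \K g$ in $\K E\pi \tp (\Tot \K X)^{\ten 2}$, then $N\intalg$ sends that to the $q$-simplex $(\faked_1,\faked_2)\mapsto e(\faked_2)\ten (\K f)(\faked_1,\faked_2)\ten (\K g)(\faked_1,\faked_2)$ of $\Tot(\K E\pi\tp(\K X)^{\ten 2})$.

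Next I would compute the other route. Here $S_*\inta$ sends $[(e,f,g)]$ to the class of the $q$-simplex $\inta(e,f,g)\colon (\faked_1,\faked_2)\mapsto (e(\faked_2),f(\faked_1,\faked_2),g(\faked_1,\faked_2))$ of $\Tot(E\pi\times_\pi X^{\times 2})$, and then the bottom horizontal map (which is $N$ applied to $\algr$ for the cosimplicial space $E\pi\times_\pi X^{\times2}$, followed by the natural map $\K(E\pi\times_\pi X^{\times2})\to \K E\pi\tp(\K X)^{\ten2}$ applied levelwise) sends this to the $q$-simplex $(\faked_1,\faked_2)\mapsto e(\faked_2)\ten f(\faked_1,\faket_2)\ten g(\faked_1,\faked_2)$ — with a typo to avoid, I mean $e(\faked_2)\ten f(\faked_1,\faked_2)\ten g(\faked_1,\faked_2)$. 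The point is that $(\K f)(\faked_1,\faked_2)$ is by definition the image of the simplex $f(\faked_1,\faked_2)$ under $\K\colon X^p\to \K X^p$, so the two expressions literally coincide. Thus the underlying simplicial-set/simplicial-module maps agree before normalization, and since $N$ is a functor, they agree after applying $N$.

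The work is therefore almost entirely bookkeeping: identifying the horizontal maps correctly and verifying that $\algr$ for the product cosimplicial space $E\pi\times_\pi X^{\times2}$ is compatible, under the natural comparison maps $\K(A\times B)\xrightarrow{\cong}\K A\tp\K B$ and their $\pi$-equivariant quotients, with $\algr$ for $X$ in each tensor factor together with the identity on the constant factor $E\pi$. This compatibility is essentially the statement that $\algr$ is natural and monoidal (it is induced by the isomorphism $\K(\sk_\mess\Delta^p\times\Delta^q)\cong \K\sk_\mess\Delta^p\ten\K\Delta^q$ of \eqref{E:geominterchange}), combined with the fact that forming $E\pi\times_\pi(-)^{\times2}$ commutes with $\Tot$ in the strict sense recorded in section~\ref{S:tpoa} (the diagonal on $\sk_\mess\Delta^\bullet\times\Delta^q$ used to define $\inta$ is the same diagonal used there). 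I would spell this out as a short lemma: $\algr$ is lax monoidal and natural, so it intertwines the geometric and algebraic diagonals, and $\intalg$ is by construction the algebraic counterpart of $\inta$.

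The main obstacle — and it is a mild one — is keeping the many natural transformations straight: there are four maps in the square, each a composite of a "pass to chains" map, a "break up a product" map, and (for $\algr$) an "interchange $\K$ with $\Tot$" map, and one must be careful that the diagonal used to define $\inta$ geometrically and the diagonal implicit in $\intalg$ are sent to each other. Once the horizontal maps are written out on simplices, though, both composites produce the identical formula $(\faked_1,\faked_2)\mapsto e(\faked_2)\ten f(\faked_1,\faked_2)\ten g(\faked_1,\faked_2)$ at the level of cosimplicial simplicial modules, so commutativity holds on the nose before $N$ is applied, and functoriality of $N$ finishes the proof. It therefore suffices to check agreement on the generating simplices, which reduces the whole lemma to the identity $(\K f)(\faked_1,\faked_2)=\K(f(\faked_1,\faked_2))$ and the analogous statement for the constant factor.
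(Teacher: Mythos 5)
Your argument is correct and coincides with the paper's own proof: both trace a generic $q$-simplex $(e,f,g)$ through the square at the pre-normalized (simplicial) level, observe that both composites yield $(\faked_1,\faked_2)\mapsto e(\faked_2)\ten f(\faked_1,\faked_2)\ten g(\faked_1,\faked_2)$, and then conclude by functoriality of $N$. The extra commentary on naturality/monoidality of $\algr$ is consistent with, but not needed beyond, what the paper records.
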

\begin{proof}
Both composites in the diagram
\[ \xymatrix{ E\pi \times_\pi  (\Tot X)^{\times 2} \ar@{->}[r] \ar@{->}[d]^\inta & \K E\pi \tp (\Tot \K X)^{\ten 2} \ar@{->}[d]^\intalg \\
\Tot(E\pi \times_\pi X^{\times 2}) \ar@{->}[r] & \Tot (\K E\pi \tp (\K X)^{\ten 2}) }\]
take $(e,f,g)$ in simplicial degree $q$ to the function with domain $\Delta^\bullet \times \Delta^q$ sending $(\faked_1, \faked_2)$ to $(e(\faked_2)) \ten (f(\faked_1, \faked_2)) \ten (g(\faked_1, \faked_2)).$
\end{proof}

\begin{prop}\label{T:geomtoalg} Let $X$ be a cosimplicial space.
For each $m$, the diagram
\[ \xymatrix{ 
H (\Tot X) \ar@{->}[r] \ar@{->}[d]^{Q^m} & H(\Tot \K X) \ar@{->}[d]^{Q^m_{alg}}\\
H (\Tot (E\pi \times_\pi X^{\times 2}) ) \ar@{->}[r] & H(\Tot (\K E\pi \tp (\K X)^{\ten 2})) \\
}\]
commutes, where $Q^m_{alg} = H(\intalg \nabla q^m)$ and the horizontal maps come from $\algr: \K \Tot \to \Tot \K$. 
\end{prop}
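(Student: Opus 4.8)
The plan is to reduce everything to a diagram of (simplicial, or cosimplicial simplicial) $\K$-modules, then apply $N$ and pass to homology. The operation $Q^m$ on $H(\Tot X)$ is by definition $H$ applied to the composite $S_*(\Tot X) \overset{q^m}{\to} W \tp S_*(\Tot X)^{\ten 2} \overset{\nabla}{\to} S_*(E\pi \times_\pi (\Tot X)^{\times 2})$, while $Q^m_{alg}$ on $H(\Tot \K X)$ is $H$ of $N(\K \Tot X) \overset{q^m}{\to} W \tp N(\K \Tot X)^{\ten 2} \overset{\nabla}{\to} N(\K E\pi \tp (\Tot \K X)^{\ten 2}) \overset{N\intalg}{\to} N(\Tot(\K E\pi \tp (\K X)^{\ten 2}))$. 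So it suffices to produce a commuting diagram of chain complexes whose rows are induced by $\algr$ and whose columns realize these two composites, and then apply $H$.

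First I would set up the left half: the map $\algr \colon \K \Tot X \to \Tot \K X$ induces $S_*(\Tot X) = N\K\Tot X \to N\Tot\K X$, and I claim the square
\[
\xymatrix{
W\tp S_*(\Tot X)^{\ten 2} \ar[r] \ar[d]^{\nabla} & W \tp (N\Tot \K X)^{\ten 2} \ar[d]^{\nabla} \\
S_*(E\pi\times_\pi(\Tot X)^{\times 2}) \ar[r] & N(\K E\pi \tp (\Tot \K X)^{\ten 2})
}
\]
commutes, where the top horizontal map is $1 \tp \algr^{\ten 2}$ (postcomposed with the canonical $N(A\ten B)\to NA\ten NB$ issues handled by $AW$/$\nabla$ as in the cited definitions) and the bottom horizontal map is the one from the statement. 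Commutativity here is essentially naturality of $q^m$ (which is natural in the chain complex argument by its explicit formula $c\mapsto e_{m-|c|}\ten c\ten c + e_{m-|c|+1}\ten c\ten \partial c$) together with naturality of the shuffle map $\nabla$ with respect to maps of simplicial modules; the identification $W\simeq_\pi S_*(E\pi)$ is $\pi$-equivariant, so the $E\pi$-factor is carried along compatibly. The commutation with $q^m$ itself is just that $q^m$ applied to $\algr(x)$ equals $(1\tp\algr^{\ten 2})$ applied to $q^m(x)$, which is immediate from the formula since $\algr$ is a chain map (so it commutes with $|{-}|$ and $\partial$).

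Next, the right half is exactly Lemma~\ref{L:geomalgint}: that lemma provides the commuting square relating $S_*\inta$ and $N\intalg$ along the bottom of the two halves. Splicing the left square above with the Lemma~\ref{L:geomalgint} square along their common edge $S_*(E\pi\times_\pi(\Tot X)^{\times 2}) \to N(\K E\pi\tp(\Tot\K X)^{\ten 2})$ gives a commuting rectangle whose left column, after applying $H$, is $Q^m$ and whose right column, after applying $H$, is $H(N\intalg)\circ H(\nabla)\circ H(q^m) = H(\intalg\nabla q^m) = Q^m_{alg}$; the top and bottom rows become the maps "induced by $\algr$" in the statement. Applying $H$ to the whole rectangle yields the claimed commuting square.

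The step I expect to require the most care is verifying that the bottom horizontal map in my left square — the chain map $S_*(E\pi\times_\pi(\Tot X)^{\times 2}) \to N(\K E\pi\tp(\Tot\K X)^{\ten 2})$ — is genuinely the composite induced by $\algr$ and that $\nabla$ is compatible with it on the nose (not merely up to homotopy). Concretely one must track how $\nabla \colon W\tp(-)^{\ten 2}\to S_*(E\pi)\tp(-)^{\ten 2}\to S_*(E\pi\times_\pi(-\times-))$ interacts with applying $\K$ to a product of spaces versus tensoring $\K$-modules, i.e. with the interchange $\K(A\times B)\cong \K A\ten \K B$ and with $\algr$. This is a diagram chase using the explicit shuffle formula of Definition (the simplicial $\nabla$) and naturality; it is routine but must be written out carefully to confirm strict commutativity rather than just homotopy commutativity, since the latter would not by itself give equality of the induced maps fitting into the larger naturality diagrams used later.
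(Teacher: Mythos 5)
Your proposal matches the paper's proof essentially exactly: the paper stacks the $q^m$-naturality square on top of the $\nabla$-naturality square (both with horizontals induced by $\algr$, using $\K(E\pi\times_\pi X^{\times 2})\cong \K E\pi\tp(\K X)^{\ten 2}$), then splices in Lemma~\ref{L:geomalgint} to reach $N\Tot(\K E\pi\tp(\K X)^{\ten 2})$, and applies $H$. The commutativity concern you flag at the end is handled in the paper simply by naturality of $q^m$ and $\nabla$ as chain-level natural transformations, so no additional care beyond what you describe is actually needed.
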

Notice that the horizontal maps induce injections on the filtration quotients $F^{-s} / F^{-s-1}$. The situation is necessarily more complicated for the vertical maps -- we will see that they often reduce the filtration degree.

\begin{proof}[Proof of proposition~\ref{T:geomtoalg}]
For each chain level operation $q^m$, consider the diagram
\[ \xymatrix{
S_* \Tot X \ar@{->}[r] \ar@{->}[d]^{q^m} & N \Tot \K X \ar@{->}[d]^{q^m}\\
W\tp ( S_* \Tot X)^{\ten 2}  \ar@{->}[r] \ar@{->}[d]^\nabla & W\tp (N \Tot \K X)^{\ten 2} \ar@{->}[d]^\nabla\\
S_* ( E\pi \times_\pi  (\Tot X)^{\times 2}) \ar@{->}[r]  & N (\K E\pi \tp (\Tot \K X)^{\ten 2})
}\]
where $S_* = N \K$ and the horizontal maps arise from the normalization of
$ \algr: \K \Tot X \to \Tot \K X$ from \eqref{E:algr}.
Naturality of $q^m$ and $\nabla$, along with the fact that $\K (E\pi \times_\pi X^{\times 2}) \cong \K E\pi \tp (\K X)^{\ten 2}$ imply that this diagram commutes. 
The result now follows from the preceding lemma.
%\[ (\faked_1, \faked_2) \mapsto (e\faked_2) \ten (f(\faked_1, \faked_2)) \ten (g(\faked_1, \faked_2)). \]
\end{proof}

%\begin{thm}\label{T:geomtoalg}
%The map $\K \Tot X \to \Tot \K X$ is compatible with external operations, in the sense that the diagram
%\[ \xymatrix{
%S_* \Tot X \ar@{->}[r] \ar@{->}[d]^{q^m} & N \Tot \K X \ar@{->}[d]^{q^m} \\
%S_*(\Tot(E\pi \times_\pi X^{\times 2})) \ar@{->}[r] & N (\Tot (\K E\pi \tp (\K X)^{\ten 2}))
%}\] commutes for every $m$.
%\end{thm}

%%%%%%%%%%%%%%%%%%%%%%%%%%%%%%%%%%%%%%%%%%%%%%%%%%
%%%%%%%%%%%%%%%%%%%%%%%%%%%%%%%%%%%%%%%%%%%%%%%%%%
\section{Tensor Product of Cosimplicial Simplicial Modules}\label{S:tensorproducts}
%%%%%%%%%%%%%%%%%%%%%%%%%%%%%%%%%%%%%%%%%%%%%%%%%%
%%%%%%%%%%%%%%%%%%%%%%%%%%%%%%%%%%%%%%%%%%%%%%%%%%

If $X$ and $X'$ are cosimplicial spaces, then $\Tot(X) \times \Tot (X') = \Tot(X\times X')$. When dealing with cosimplicial simplicial $\K$-modules $V$ and $V'$, it is more natural to ask how $\Tot$ behaves with respect to tensor products. The purpose of the first part of this section is to give the statement of Theorem~\ref{T:tensorproducts}, which implies that in some cases the map $\Tot(V) \ten \Tot(V') \to \Tot(V\ten V')$ is an isomorphism \emph{in homology}.

Let $U$ and $V$ be cosimplicial simplicial $\K$-modules. We now show that the diagonal gives 
 an interchange map \begin{equation} \intb: \Tot_\mess U \ten \Tot_\mess V \to \Tot_\mess (U\ten V) \label{E:intb} \end{equation} which is analogous to that of $\intalg$.
To be precise, suppose that we have an elementary tensor $(f)\ten (g)$ in the set of $q$-simplices on the left hand side. We have that $(f)$ and $(g)$ are cosimplicial maps
\begin{align*}
f: \sk_\mess \Delta^\bullet \times \Delta^q &\to U \\
g: \sk_\mess \Delta^\bullet \times \Delta^q &\to V,  
\end{align*}
which we regard as linear maps from
$\K \sk_\mess \Delta^\bullet \ten \K \Delta^q$. The diagonal on $\sk_\mess \Delta^\bullet \times \Delta^q$ then gives the composite 
\[ \K \sk_\mess \Delta^\bullet \ten \K \Delta^q \to (\K \sk_\mess \Delta^\bullet \ten \K \Delta^q) \ten (\K \sk_\mess  \Delta^\bullet \ten \K \Delta^q) \overset{f\ten g}{\longrightarrow} U \ten V \]
which is the image of $(f)\ten (g)$ under $\intb$.

We thus have a map
\[  N \Tot_\mess U \ten N \Tot_\mess V \overset{\nabla}\to N ( \Tot_\mess U  \ten  \Tot_\mess V) \overset{N\intb}\longrightarrow N \Tot_\mess(U\ten V) \] which produces
\[ H(\Tot_\mess U) \ten H(\Tot_\mess V) \to H (\Tot_\mess (U\ten V)). \]
The definition of $\intb$ mirrors, for cosimplicial spaces $X$ and $X'$, the usual isomorphism $\Tot_\mess X \times \Tot_\mess X' \overset\cong\to \Tot_\mess (X\times X')$. In fact, using the natural transformation $\algr: \K \Tot_\mess \to \Tot_\mess \K$, we have the following:
\begin{prop}\label{P:geomtoalgprod} If $X$ and $X'$ are cosimplicial spaces, then
the diagram
\[ \xymatrix{
H (\Tot_\mess X) \ten H(\Tot_\mess X') \ar@{->}[r] \ar@{->}[d]^\cong & H (\Tot_\mess \K X) \ten H(\Tot_\mess \K X') \ar@{->}[d]^{H(\intb \nabla)} \\
H(\Tot_\mess (X\times X')) \ar@{->}[r] & H(\Tot_\mess(\K X \ten \K X'))
}\]
commutes. \qed
\end{prop}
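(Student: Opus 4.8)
The plan is to reduce this to the commutativity of a diagram at the level of cosimplicial simplicial $\K$-modules (or even at the level of simplicial $\K$-modules after applying $\Tot_\mess$), together with the naturality of the shuffle map $\nabla$ and the basic identity $\K(Z\times Z') \cong \K Z \ten \K Z'$ for sets. First I would observe that the left vertical map in the square is the composite
\[ H(\Tot_\mess X) \ten H(\Tot_\mess X') \cong H\big(\Tot_\mess X \times \Tot_\mess X'\big) \cong H\big(\Tot_\mess(X\times X')\big), \]
where the first isomorphism is the homology cross product (i.e.\ induced by $\nabla$ after identifying $S_*$ of a product with $S_* \ten S_*$ up to the Eilenberg-Zilber equivalence) and the second comes from the canonical isomorphism $\Tot_\mess X \times \Tot_\mess X' \cong \Tot_\mess(X\times X')$ recalled in section~\ref{S:tpoa}. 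The right vertical map is $H(\intb\,\nabla)$ by definition. The horizontal maps are induced by $\algr \ten \algr$ on the left-hand column (after the cross-product identification) and by $\algr$ on the bottom.

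The key step is to check commutativity already on chains, before passing to homology. I would write down the square
\[
\xymatrix{
N(\Tot_\mess X) \ten N(\Tot_\mess X') \ar[r] \ar[d]_{\nabla} & N(\Tot_\mess \K X)\ten N(\Tot_\mess \K X') \ar[d]^{\nabla} \\
N(\Tot_\mess X \times \Tot_\mess X') \ar[r] \ar[d]_{\cong} & N(\Tot_\mess \K X \ten \Tot_\mess \K X') \ar[d]^{N\intb} \\
N\Tot_\mess(X\times X') \ar[r] & N\Tot_\mess(\K X \ten \K X')
}
\]
where all horizontal arrows come from $\algr$ (and $\K$ applied to a map of sets). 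The top square commutes by naturality of $\nabla$ together with the identification $\K(\Tot_\mess X \times \Tot_\mess X') \cong \K\Tot_\mess X \ten \K\Tot_\mess X'$. The bottom square is the one requiring a little care: both ways around send a simplex represented by maps $f\colon \sk_\mess\Delta^\bullet\times\Delta^q\to X$ and $g\colon \sk_\mess\Delta^\bullet\times\Delta^q\to X'$ to the composite of the diagonal on $\sk_\mess\Delta^\bullet\times\Delta^q$ with $\K f \ten \K g$; this is essentially the observation in the proof of Lemma~\ref{L:geomalgint} that $\intb$ and the classical product interchange $\Tot_\mess X\times\Tot_\mess X'\cong\Tot_\mess(X\times X')$ agree after applying $\K$, which in turn is just the functoriality of $\K(-)$ and the fact that $\K$ turns Cartesian products of sets into tensor products. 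Applying $H$ to the outer rectangle and noting that the left column computes exactly the cross-product-then-canonical-isomorphism map described above gives the claim.

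The main obstacle I anticipate is purely bookkeeping: one must be careful that ``the left vertical isomorphism'' in the statement really is the composite I claimed, i.e.\ that the Eilenberg-Zilber shuffle map is used consistently on both the space side and the module side so that the square of \emph{identifications} $\K\Tot_\mess(-) \cong \Tot_\mess\K(-)$ versus $S_*(-\times-)\cong S_*(-)\ten S_*(-)$ is genuinely compatible with $\nabla$. Once the diagram is set up with $\nabla$ in the same role everywhere, each small square commutes for a reason already recorded in the excerpt (naturality of $\nabla$, or the Lemma~\ref{L:geomalgint}-style computation with the diagonal), and there is no genuinely new content — hence the \qed in the statement. I would present the chain-level diagram, remark that its two small squares commute by the cited reasons, and conclude by applying homology.
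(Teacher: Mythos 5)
Your proof is correct, and since the paper leaves this proposition with a bare \qed, you have supplied exactly the omitted routine argument. Reducing to a chain-level rectangle, invoking naturality of $\nabla$ for the top square, and checking by hand on an elementary tensor $f\ten g$ that $\intb\circ(\algr\ten\algr)$ and $\algr\circ(\text{canonical iso})$ both produce the map $a\mapsto \K f(a)\ten\K g(a)$ (via the diagonal and the identification $\K(Z\times Z')\cong\K Z\ten\K Z'$) is precisely the content of the assertion, in the same spirit as Lemma~\ref{L:geomalgint}. The only mild bookkeeping caveat you correctly flag — that the unlabeled $\cong$ on the left of the statement is the shuffle cross product followed by the canonical isomorphism $\Tot_\mess X\times\Tot_\mess X'\cong\Tot_\mess(X\times X')$ — is indeed the intended reading given the sentence preceding the proposition, and over the field $\K$ the Künneth/Eilenberg–Zilber theorem makes it an isomorphism as claimed.
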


We next consider the composite
\begin{align*} T_\mess CN U \ten T_\mess CN V 
& \to T_\mess (CNU \ten CNV) 
\\ &\overset{AW}{\to} T_\mess C(NU \ten NV) 
\\ &\overset{\nabla}{\to} T_\mess CN(U \ten V), \end{align*}
which on homology induces 
$ H (T_\mess CNU) \ten H (T_\mess CNV) \to H(T_\mess CN (U\ten V)). $

\begin{thm}\label{T:tensorproducts} In homology, the map $\phi_\mess$ from \eqref{E:phiell} is compatible with tensor products, in the sense that the diagram
\[ \xymatrix{
H(\Tot_\mess U) \ten H(\Tot_\mess V) \ar@{->}[r]^-{\phi_\mess \ten \phi_\mess} \ar@{->}[d]^{H(\intb \nabla)}  &  H (T_\mess CNU) \ten H (T_\mess CNV) \ar@{->}[d]^{ H(\nabla AW)} \\
H (\Tot_\mess (U\ten V)) \ar@{->}[r]^{\phi_\mess} & H(T_\mess CN(U\ten V))
}\] commutes.
\end{thm}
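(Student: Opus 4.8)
The plan is to lift the claimed square to a homotopy-commutative square of chain complexes and then, by naturality, to reduce the comparison to an explicit identity involving the fundamental classes defining $\phi_\mess$. Write $\psi$ for the canonical interleaving $T_\mess CNU\ten T_\mess CNV\to T_\mess(CNU\ten CNV)$ of the statement. Unwinding the definitions of the maps in the diagram, and using the K\"unneth isomorphism over $\K$, it suffices to produce a chain homotopy
\[ \phi_\mess\circ N\intb\circ\nabla \ \simeq\ \nabla\circ AW\circ\psi\circ(\phi_\mess\ten\phi_\mess) \]
of maps $N\Tot_\mess U\ten N\Tot_\mess V\to T_\mess CN(U\ten V)$ that is natural in the cosimplicial simplicial $\K$-modules $U$ and $V$. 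Recall from \eqref{E:phiell} that Bousfield's $\phi_\mess$ is evaluation of $T_\mess CN(-)$ at a distinguished class: the functor $V\mapsto(\Tot_\mess V)_q$ is corepresented by $\K\sk_\mess\Delta^\bullet\ten\K\Delta^q$, and $\phi_\mess$ sends a $q$-simplex $f$ to $\slush{f}(\omega_q)=(T_\mess CN f)(\omega_q)$, where $\omega_q:=\prod_{k=0}^\mess\imath_k\wedge\imath_q$. Since $\phi_\mess$, $\intb$, $\nabla$ and $AW$ are all natural, and every $q$-simplex of $\Tot_\mess V$ is the image of the identity simplex of $\Tot_\mess(\K\sk_\mess\Delta^\bullet\ten\K\Delta^q)$ under an induced map, I would reduce the construction of the natural homotopy to the universal case $U=\K\sk_\mess\Delta^\bullet\ten\K\Delta^p$, $V=\K\sk_\mess\Delta^\bullet\ten\K\Delta^{p'}$ with $f,g$ the identity simplices, for all $p,p'$.

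Next I would trace the two routes on this universal pair. The left-hand composite yields a sum over $(p,p')$-shuffles of maps assembled from the diagonal of $\K\sk_\mess\Delta^\bullet\ten\K\Delta^{p+p'}$ (this diagonal being the content of $\intb$) together with degeneracies, followed by evaluation against $\omega_{p+p'}$; the right-hand composite pairs each tensor factor with its own class $\omega_p$, $\omega_{p'}$ and then applies the cosimplicial $AW$ and the cosimplicial shuffle $\nabla$. Matching these should rest on three standard facts: (i) the simplicial diagonal followed by the simplicial $AW$ map is the coassociative Alexander--Whitney coproduct on chains, so the diagonal underlying $\intb$ distributes over the product defining $\omega_{p+p'}$; (ii) the simplicial $AW$ and $\nabla$ are mutually inverse chain homotopy equivalences, which lets one cancel, up to coherent homotopy, the shuffles introduced along the way; and (iii) on the bi-indexed object $\K\sk_\mess\Delta^\bullet\ten\K\Delta^q$ the simplicial and cosimplicial Eilenberg--Zilber maps commute past one another up to homotopy, which is the bicomplex compatibility of \cite[Lemma 4.1]{me1}. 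It then remains to check that everything descends along the truncations $T_\mess=TB/F^{-(\mess+1)}$, which one can do column by column, or by induction on $\mess$ using the short exact sequences relating $T_\mess$, $T_{\mess-1}$, and a single column of $CN(U\ten V)$, together with the Bousfield--Kan fibrations $\Tot_\mess\to\Tot_{\mess-1}$.

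The main obstacle will be the bookkeeping in this middle step. Several Eilenberg--Zilber maps enter with distinct roles --- the simplicial shuffle concealed inside each $\omega_q$ via $\imath_k\wedge\imath_q$, the simplicial shuffle in $N\intb\circ\nabla$, and the cosimplicial $AW$ and $\nabla$ on the $T_\mess CN$ side --- and these interact both with the diagonal underlying $\intb$ and with the regrading comparing $T_\mess(CNU\ten CNV)$ to $T_\mess CN(U\ten V)$. Keeping the ensuing degeneracies and signs straight, and arranging the comparison homotopies to be natural enough to justify the reduction to the universal pair, is where the real work lies; should a direct comparison prove unwieldy, a suitable acyclic-models argument with models the corepresenting objects $\K\sk_\mess\Delta^\bullet\ten\K\Delta^p$ would instead furnish the required natural homotopy.
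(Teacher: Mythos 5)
Your sketch correctly identifies the shape of the problem but leaves the decisive step undone, and a couple of your proposed tools point in the wrong direction.

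The paper proves the square commutes \emph{strictly} on the chain level, not just up to homotopy. After unwinding, both composites applied to an elementary tensor $(f)\ten(g)$ are explicit double sums over shuffles (Calculations~\ref{CALC:upperrightfortytwo} and \ref{CALC:lowerfortytwo}): one side a sum over pairs $(\alpha,\beta)\in\shuff{k,j}\times\shuff{\bk,\bj}$ and the other a sum over $\gamma\in\shuff{p,q}$. The whole content of the theorem is that these two sums coincide term-by-term, which rests on the Shuffle Splitting bijection (Proposition~\ref{P:shuffbijection}, a combinatorial form of Vandermonde's identity) together with four identities in the simplicial indexing category matching the arguments of $f$ and $g$ on each side. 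This is precisely the ``bookkeeping'' you flag as ``where the real work lies'' and then defer; in the paper that bookkeeping \emph{is} the proof, and there is no residual homotopy to construct.

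Your ``standard fact (ii)'' --- that simplicial $AW$ and $\nabla$ are mutually inverse up to homotopy, allowing one to ``cancel, up to coherent homotopy, the shuffles introduced along the way'' --- is a misdirection here. No cancellation of $AW$ against $\nabla$ occurs; the shuffles on the two sides are genuinely different (one side involves the simplicial shuffle in $\imath_k\wedge\imath_q$ twice, once for $U$ and once for $V$, the other involves a single simplicial shuffle on the product plus a cosimplicial Eilenberg--Zilber exchange), and they must be matched exactly, not absorbed into a homotopy. Likewise the acyclic-models fallback is not obviously available: the cosimplicial simplicial modules $\K\sk_\mess\Delta^\bullet\ten\K\Delta^q$ corepresent $(\Tot_\mess(-))_q$, but the relevant chain functor $T_\mess CN(-)$ in the target column is not calculable on these models in a way that would let an acyclic-models argument run without first doing essentially the same shuffle computation. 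The universal-example reduction you propose is sound in principle and consistent with the paper's own use of representability elsewhere, but it only relocates the calculation; it does not avoid it. In short, the gap is concrete: you have not matched the shuffle indexing on the two sides, and the ``standard facts'' you invoke do not do it for you.
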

Section~\ref{S:Ptensor} is devoted to the proof of this theorem.

% \begin{rem}
% The tensor product is the wrong target for the external squaring operation, since, for example, a noncommutative square is not a homomorphism. The correct target should be the homotopy orbits. 
% \end{rem}

We wish the external squaring operation to be a homomorphism, so we must use the homotopy orbits of the tensor product rather than the tensor product itself.
For a chain complex $C$, the standard passage from $C\ten C$ to the homotopy orbits $W\tp (C\ten C)$ is given by $c\ten c' \mapsto e_0 \ten c \ten c'$. We now extend this to the simplicial setting.

\begin{defn}\label{D:xi}
Let $Z$ be a simplicial $\K$-module. Define a map of simplicial $\K$-modules $\xi$ by \begin{align*} \xi: Z \ten Z &\to \K E\pi \tp (Z\ten Z) \\
z \ten z' &\mapsto * \ten z \ten z'. \end{align*} 
By `$*\in E\pi$', we mean the image of $e$ under the inclusion of the fiber $\pi \to E\pi$; if we use a standard bar construction for $E\pi$\label{Label:standardbarconstruction} (see \cite[p.~257 and p.~269--270]{weibel}, \cite[V.4]{gj}, or \cite[p.~87--88]{mayso}), we may write
\[ *=(\underbrace{e, \cdots , e}_{q+1}) \text{ if } z, z' \in Z_q. \] 
\end{defn}

The reason for this definition is the following lemma:

\begin{lem} Let $Z$ be a simplicial $\K$-module. Then the diagram
\[ \xymatrix{
& N(Z\ten Z) \ar@{->}[dr]^{N\xi} \ar@{->}[dl]_{e_0 \ten -} & \\
W\tp N(Z\ten Z) \ar@{->}[rr]^{\nabla} && N(\K E\pi \tp (Z\ten Z))
}\]
commutes. \qed
\end{lem}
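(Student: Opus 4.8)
The plan is to verify the triangle degreewise. Each of the three maps is a chain map: $N\xi$ is the normalization of a simplicial map, $\nabla$ is the shuffle map, and $e_0 \ten -$ is a chain map because $d(e_0) = (1+\sigma)e_{-1} = 0$ in $W$ (we have $W_{-1} = 0$, so $e_0$ is a cycle). Hence it suffices to prove, for each $n$ and each $\alpha \in N_n(Z\ten Z)$, the identity $\nabla(e_0 \ten \alpha) = N\xi(\alpha)$ in $N_n(\K E\pi \tp (Z\ten Z))$.

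First I would unwind the identifications. Under $W \cong S_*(E\pi) = N(\K E\pi)$, the degree-zero generator $e_0$ corresponds to the nondegenerate $0$-simplex $e \in E\pi_0 = \pi$, where I use the bar model $E\pi_q = \pi^{\times (q+1)}$ fixed in section~\ref{S:notationbackground}, in which the degeneracy $s_i$ duplicates the $i$-th coordinate. The map $\nabla$ in the diagram is the simplicial shuffle map $N(\K E\pi) \ten N(Z\ten Z) \to N(\K E\pi \ten (Z\ten Z))$, which is $\pi$-equivariant for the diagonal action and therefore descends to the $\ten_{\K\pi}$-quotient, producing the map into $N(\K E\pi \tp (Z\ten Z))$; on a representative $e_0 \ten \alpha$ it is computed by the shuffle formula.

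Now I would apply that formula in bidegree $(0,n)$. There is a unique $(0,n)$-shuffle, the identity permutation of $\{0,\dots,n-1\}$, so the sum collapses to the single term $\nabla(e_0 \ten \alpha) = (s_{n-1}\cdots s_0\, e_0)\ten\alpha$, with no degeneracy applied to $\alpha$. In the bar model, $s_{n-1}\cdots s_0$ applied to $e \in E\pi_0$ repeatedly duplicates the entry, yielding $(\underbrace{e,\dots,e}_{n+1}) \in E\pi_n$, which is exactly the element $*$ of Definition~\ref{D:xi}. Thus $\nabla(e_0\ten\alpha) = *\ten\alpha$, and by the definition of $\xi$ this is precisely $N\xi(\alpha)$, which completes the argument. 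The computation is elementary; the only points requiring attention are bookkeeping ones — that the degree-$0$ input forces the shuffle sum to a single term, that the simplicial shuffle map is compatible both with the $\ten_{\K\pi}$-quotient and with normalization, and that the chosen bar model sends the iterated degeneracy of $e$ to $*$ — each immediate from the conventions already in place, so no genuine obstacle arises.
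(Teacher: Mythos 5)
Your argument is correct, and it is essentially the only natural argument: the paper itself supplies no proof (the lemma is stated with an immediate $\qed$, signaling that the author regards it as routine). You supply exactly the computation the author is implicitly relying on — the shuffle formula in bidegree $(0,n)$ collapses to a single term, the iterated degeneracy of $e$ in the bar model is the basepoint $*$ of Definition~\ref{D:xi}, and $\pi$-equivariance of the shuffle map is what makes the quotient by $\ten_{\K\pi}$ well-behaved. Your side remarks (that $e_0 \ten -$ is a chain map because $\partial e_0 = 0$, and that the conventions from section~\ref{S:notationbackground} for the model of $E\pi$ matter here) are both accurate and worth noting.
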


%, and notice that it would be desirable to have a map $\xi$ of simplicial $\K$-modules making the diagram

%commute. One checks that it commutes if we define $\xi$ to be 

%where $*$ is induced from the identity element of $\pi$ in the fibration $\pi \to E\pi \to B\pi$. 
%using the standard `$W$' or bar construction as our model for $E\pi$, we have \[ *=(\underbrace{e, \cdots , e}_{q+1}) \text{ if } z, z' \in Z_q. \] 

\begin{prop}\label{P:tensortocommtensor} If $V$ is a cosimplicial simplicial $\K$-module, then the diagram
\[ \xymatrix{
H_n(\Tot_\mess V) \ar@{->}[r]^{\phi_\mess} \ar@{->}[d]^{Q^n} & H_nT_\mess CN V \ar@{->}[dd]^{Q^n} \\
H_{2n}(\K E\pi \tp (\Tot_\mess V)^{\ten 2}) \ar@{->}[d]^{H(\intalg)} & \\
H_{2n}(\Tot_\mess (\K E\pi \tp V^{\ten 2})) \ar@{->}[r]^{\phi_\mess} & H_{2n}T_\mess CN(\K E\pi \tp V^{\ten 2})
}\]
commutes.
\end{prop}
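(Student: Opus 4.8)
The plan is to factor the big square as a pasting of smaller squares, each of which is either already established in the excerpt or follows from naturality of the constructions involved. The top-level idea: the operation $Q^n$ on the left is $H(\xi)$ followed by an $E_\infty$-multiplication step and then the comparison map $\nu$, while $Q^n$ on the right is the purely algebraic operation $q^n$ built from the external formula $c \mapsto e_{n-|c|}\ten c\ten c + e_{n-|c|+1}\ten c\ten \partial c$. So I would first reduce everything to the level of the external operations $q^n : C \to W\tp(C\ten C)$, using Proposition~\ref{T:geomtoalg} (which already handles the passage from geometric $Q^m$ to the algebraic $Q^m_{alg} = H(\intalg\nabla q^m)$) together with Definition~\ref{D:xi} and the lemma immediately following it (which says $N\xi = \nabla\circ(e_0\ten -)$). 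The net effect is that both vertical composites are, up to the already-known identifications, the map induced on homology by $q^n$ composed with $\nabla$ and $\intalg$ (resp.\ with $\nabla$ and $AW$ in the conormalized world).

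Concretely, here are the steps in order. First, rewrite the left column: $Q^n = H(\intalg)\circ H(\nabla)\circ H(q^n)$ where $q^n : S_* \Tot_\mess V \to W\tp (S_*\Tot_\mess V)^{\ten 2}$ (using $S_* = N$ on simplicial modules). Second, use the naturality of $q^n$ and of $\nabla$ with respect to the quasi-isomorphism $\phi_\mess : N\Tot_\mess V \to T_\mess CNV$ to slide $\phi_\mess$ past $q^n$: since $q^n$ is defined by a universal chain-level formula that is natural in the chain complex, the square relating $q^n$ on $N\Tot_\mess V$ and $q^n$ on $T_\mess CNV$ commutes on the nose (or at least up to the specified chain homotopies, which is enough in homology). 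Third, handle the $\nabla$ step: here I must compare $\nabla : W\tp(T_\mess CNV)^{\ten 2}\to$ something with the cosimplicial $AW$/$\nabla$ machinery, and this is exactly where Theorem~\ref{T:tensorproducts} enters — applied to $U = V' = \K E\pi$-tensored copies, it tells us that $\phi_\mess$ is compatible with the interchange maps $\chi$ and $\varrho$ up to the conormalized shuffle and Alexander-Whitney maps. Fourth, assemble: the composite along the right-and-down path equals $H(\phi_\mess)\circ H(\intalg)\circ H(\nabla)\circ H(q^n)$, and the down-and-right path equals $H(q^n)$ followed by the conormalized $\nabla AW$ machinery and $\phi_\mess$ on the tensor factor; these agree by the naturality squares just listed plus Theorem~\ref{T:tensorproducts} and Lemma~\ref{L:geomalgint}.

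The main obstacle I expect is the third step — tracking how the homotopy-orbit coefficient complex $W$ interacts with the passage from normalized chains of a totalization to the conormalized total complex. The issue is that $W\tp(-)$ is a functor on chain complexes, and one must check that applying it commutes (in homology, via the relevant shuffle maps) with $\phi_\mess$ and with the $\chi$, $AW$, $\nabla$ of Theorem~\ref{T:tensorproducts}; in other words, one needs a ``$W$-equivariant'' or ``$\pi$-equivariant'' refinement of Theorem~\ref{T:tensorproducts}, or at least enough of one to push the argument through. I would try to get this by noting that $W\tp(-)$ only changes the problem by a direct sum of shifted copies, so all the naturality squares for $q^n$ reduce to naturality of $\nabla$ and $AW$ on ordinary tensor powers, which is classical (Eilenberg–Zilber). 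If a clean equivariant statement is awkward, the fallback is to expand $q^n$ via its two-term formula $e_{n-|c|}\ten c\ten c + e_{n-|c|+1}\ten c\ten\partial c$ and verify commutativity term by term, using that $\partial$ commutes with all the structure maps in sight; this is more tedious but entirely mechanical, and in homology the $\partial c$ term is harmless.
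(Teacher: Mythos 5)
Your plan of attack is close to the paper's in spirit --- reduce to the external operation, invoke Theorem~\ref{T:tensorproducts}, and use naturality --- and you correctly sense where the real difficulty lies. But the obstacle you flag in step~3 is a genuine gap in the proposal, and the two fallbacks you sketch do not resolve it.

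The specific trouble is this: you want to slide $\phi_\mess$ past the composite $\intalg\circ\nabla\colon W\tp(N\Tot_\mess V)^{\ten 2}\to N\Tot_\mess(\K E\pi\tp V^{\ten 2})$, but there is no map $W\tp(T_\mess CNV)^{\ten 2}\to T_\mess CN(\K E\pi\tp V^{\ten 2})$ sitting in the spot you would need for a naturality square; the functor $W\tp(-)$ lives at the chain level and does not interact cleanly with $\phi_\mess$ or with the cosimplicial conormalization. Your first fallback (``$W\tp(-)$ is a direct sum of shifts'') only addresses the underlying graded module, not the chain maps $\nabla$ and $AW$ that thread through the specific model $N\K E\pi\cong W$. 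Your second fallback (expand $q^n$ term by term) is closer but you stop before carrying it out, and it is precisely here that you would need the ingredient you did not isolate.

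That missing ingredient is the map $\xi\colon Z\ten Z\to\K E\pi\tp(Z\ten Z)$, $z\ten z'\mapsto *\ten z\ten z'$, from Definition~\ref{D:xi}. Because the bottom operation on a degree-$n$ class is exactly $[c]\mapsto[e_0\ten c\ten c]$, it factors at the cosimplicial-simplicial level through the diagonal and $\xi$: you never need $W\tp(-)$ at all. Concretely, the paper checks two small squares by direct evaluation and naturality: first, that $\xi$ and $\Tot_\mess(\xi)$ intertwine $\intb$ with $\intalg$; second, that since $\phi_\mess$ is a natural transformation and $\Tot_\mess(\xi)$ is a map of cosimplicial simplicial modules, $\phi_\mess$ slides past $\Tot_\mess(\xi)$ automatically. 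Then Theorem~\ref{T:tensorproducts} handles the diagonal/tensor-square stage. The reason this works is that the right-hand $Q^n$ in the statement is \emph{defined} via the diagonal, the conormalized $\nabla AW$, and $T_\mess CN(\xi)$ --- not via $q^n$ followed by some conormalized analogue of $W\tp(-)$ --- and pinning this down is what makes the ``$W$-equivariance'' issue evaporate. So the fix to your outline is to replace the $q^n/\nabla/W\tp$ bookkeeping of steps~1--3 with the $\xi$-factorization, after which everything you listed (naturality, Theorem~\ref{T:tensorproducts}) applies directly.
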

\begin{proof} If $Z$ is a simplicial module, then the bottom external operation is defined at the chain level by the composite of maps in the commutative diagram
\[ \xymatrix{ N(Z) \ar@{->}[r]^-{\text{diag}} &
N(Z) \ten N(Z) \ar@{->}[d]^\nabla \ar@{->}[r]^-{e_0\ten -} & W\tp N(Z) \ten N(Z) \ar@{->}[d]^{1\ten \nabla}_\simeq  \\
&N(Z \ten Z) \ar@{->}[r]^-{e_0\ten -} \ar@{->}[dr]_\xi & W\tp N(Z\ten Z) \ar@{->}[d]^\nabla_\simeq \\
&& N(\K E\pi \tp Z \ten Z) }\]
which becomes a homomorphism in homology. 
By \[ Q^n: H_n(\Tot_\mess V) \to H_{2n}(\K E\pi \tp (\Tot_\mess V)^{\ten 2})\] we mean this operation with $Z=\Tot_\mess V$.

Let $V$ be a cosimplicial simplicial module and let $f,g \in (\Tot_\mess V)_q$. Considering $f$ and $g$ as maps $\sk_\mess \Delta^\bullet \times \Delta^q \to V$, we can evaluate that both composites in the diagram
\[ \xymatrix{
\Tot_\mess V \ten \Tot_\mess V \ar@{->}[r]^{\intb} \ar@{->}[d]^\xi & \Tot_\mess (V\ten V) \ar@{->}[d]^{\Tot_\mess(\xi)} \\
\K E\pi \tp (\Tot_\mess V)^{\ten 2} \ar@{->}[r]^{\intalg} & \Tot_\mess(\K E\pi \tp V^{\ten 2})
}\]
take $f\ten g$ to the map
\begin{align*}
\sk_\mess \Delta^\bullet \times \Delta^q & \to \K E\pi \tp V^{\ten 2} \\
a & \mapsto * \ten f(a) \ten g(a),
\end{align*}
so this diagram commutes. Since $\phi_\mess$ is a natural transformation, the diagram
\[ \xymatrix{
H \Tot_\mess (V\ten V) \ar@{->}[r]^{\phi_\mess} \ar@{->}[d]^{H\Tot_\mess (\xi)} & HT_\mess CN(V\ten V) \ar@{->}[d]^{H\xi} \\
H\Tot_\mess (\K E\pi \tp V^{\ten 2}) \ar@{->}[r]^{\phi_\mess} & HT_\mess CN( \K E\pi \tp V^{\ten 2})
}\]
commutes. The result follows by combining this diagram with the homology of the previous one.
\end{proof}

%%%%%%%%%%%%%%%%%%%%%%%%%%%%%%%%%%%%%%%%%%%%%%%%%%
%%%%%%%%%%%%%%%%%%%%%%%%%%%%%%%%%%%%%%%%%%%%%%%%%%
\subsection{Bottom Operation}
%%%%%%%%%%%%%%%%%%%%%%%%%%%%%%%%%%%%%%%%%%%%%%%%%%
%%%%%%%%%%%%%%%%%%%%%%%%%%%%%%%%%%%%%%%%%%%%%%%%%%

We now show that Theorem~\ref{T:convergence} holds in the special case when $m=t-s$.
We must examine the way the operation behaves with respect to filtrations. To simplify notation, we begin with the bicomplex case. 

Let $B$ be a second-quadrant bicomplex. The filtration on the homology of the totalization of $B$ is given by \[ F^{-s} H(TB) = \ker (H TB\to H T_{s-1} B), \] where $T_{s-1} B = TB / F^{-s} TB$ as in \eqref{E:totalell}.

\begin{lem} Let $B$ and $B'$ be second-quadrant bicomplexes. Then the map 
\[ HTB \ten HTB' \to H(TB \ten TB') \to HT(B\ten B') \] 
takes $F^{-s} \ten F^{-s'}$ into $F^{-s-s'}$.
\end{lem}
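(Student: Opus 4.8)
The plan is to unwind the definition of the column filtration on a product total complex and track where an element of $F^{-s}(TB) \ten F^{-s'}(TB')$ lands after applying the two maps in the composite. First I would recall that an element $x \in T_m B$ is a tuple $(x_j)_j$ with $x_j \in B_{j,m-j}$, and that $x \in F^{-s}_m(TB)$ precisely when $x_j = 0$ for $j > -s$, i.e. the tuple is supported in columns $j \le -s$. So the claim is purely about column supports: I need to check that the external pairing $TB \ten TB' \to T(B\ten B')$ sends (columns $\le -s$) $\ten$ (columns $\le -s'$) into (columns $\le -s-s'$).

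The main step is to write down the pairing map $TB \ten TB' \to T(B \ten B')$ explicitly. On a bihomogeneous elementary tensor, an element of $B_{j,p-j}$ tensored with an element of $B'_{j',p'-j'}$ lands in $(B\ten B')_{j+j', (p-j)+(p'-j')}$, which sits in column $j+j'$ of the target bicomplex. Hence if the first factor is supported in columns $j \le -s$ and the second in columns $j' \le -s'$, every component of the product is supported in columns $j + j' \le -s - s'$. This is exactly the statement $F^{-s-s'}$, since $F^{-k}(T(B\ten B'))$ is by definition the product over columns $j \le -k$. (There is a minor bookkeeping point that $TB$ uses the \emph{product} total complex, so an element of $F^{-s}$ is an infinite tuple supported in columns $\le -s$; the pairing of two such tuples is still a well-defined tuple, degreewise a finite sum because $B$ and $B'$ are second-quadrant, so the infinite products cause no convergence trouble.)

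Concretely I would argue as follows. Let $x \in F^{-s}_m(TB)$ and $x' \in F^{-s'}_{m'}(TB')$. Their image in $H(TB\ten TB')$ is represented by $x \ten x'$, whose component in bidegree $(n, (m+m')-n)$ of the tensor-product double complex $TB \ten TB'$ is $\sum_{a+b=n} x_a \ten x'_b$ (using $TB_a = \prod_j B_{j,a-j}$, etc.). Applying the Eilenberg--Zilber-type pairing to land in $T(B\ten B')$, the column-$c$ component of the result is assembled from terms $B_{j,\ast}\ten B'_{j',\ast}$ with $j + j' = c$; since $x_a$ is supported in columns $\le -s$ and $x'_b$ in columns $\le -s'$, only $c \le -s-s'$ contributes. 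Therefore $x\ten x'$, hence its image in $HT(B\ten B')$, lies in $F^{-s-s'}$. I do not expect a serious obstacle here: the whole content is that the column grading is additive under the external tensor pairing of bicomplexes, so the proof is a one-paragraph support-tracking argument. The only thing to be careful about is to use the correct (product) total complex and to confirm the sign/indexing conventions from \eqref{E:total} and \eqref{E:totalell} match, but these are routine.
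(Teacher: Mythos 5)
Your proof is correct and is essentially the paper's argument, but the emphasis is inverted. The paper's one substantive sentence is the observation that, since $F^{-s}H(TB)$ is \emph{defined} as $\ker(HTB \to HT_{s-1}B)$ rather than as the image of $H(F^{-s}TB)\to H(TB)$, one must first replace a chosen cycle $x$ by $x+\partial a$ (for suitable $a$) so that the representative actually lies in $F^{-s}TB$. You begin directly with ``Let $x\in F^{-s}_m(TB)$'', which silently presupposes this reduction; that is the only step the paper bothers to record, so it is worth a sentence. Conversely, the column-support bookkeeping you carefully spell out (the external pairing is additive in the column degree, hence $F^{-s}\ten F^{-s'}$ lands in columns $\le -s-s'$) is exactly what the paper compresses into the single line ``We then have $x\ten y\in F^{-s-s'}T(B\ten B')$''. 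Add the one-line justification for the choice of representative and your argument is complete.
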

\begin{proof}
We will let $B$ and $B'$ be (second-quadrant) bicomplexes, and
\begin{align*} [x] &\in F^{-s} H(TB) = \ker(H TB \to H T_{s-1} B) \\
[y] & \in F^{-s'} H(TB'). \end{align*}
One consequence is that there is an element $a$ so that $x+\partial a \in F^{-s} TB$, thus we can and will assume that the representatives are already in the appropriate filtration:
\begin{align*} x &\in F^{-s} TB \\ y&\in F^{-s'} TB'. \end{align*}
We then have $x\ten y \in F^{-s-s'} T(B\ten B')$.
%Then \[ x\ten y \in F^{-s-s'} T(B\ten B'), \]
%thus the map
%\[ HTB \ten HTB' \to HT(B\ten B') \] takes $F^{-s} \ten F^{-s'}$ into $F^{-s-s'}$.
\end{proof}

We now return to the setting where we have a cosimplicial simplicial $\K$-module $V$, and prove Theorem~\ref{T:convergence} for for the special case when $m=t-s$. The map $\abut$ in the theorem statement is the abutment map from \eqref{SYMBOL:ABUTMENT}.

\begin{thm}\label{T:bottomop}
Suppose that $X$ is a cosimplicial space. Then
\[ Q^{t-s} [ F^{-s} H_{t-s} (\Tot (X))] \ci F^{-2s}H_{2(t-s)} (\Tot(E\pi \times_\pi X^{\times 2})) \]
and the following diagram commutes:
\[ \xymatrix{
\left({F^{-s}}/{F^{-s-1}}\right) H_{t-s} (\Tot X) \ar@{->}[r]^-\abut \ar@{->}[dd]^{Q^{t-s}} & E_{-s,t}^\infty (S_*(X)) \ar@{->}[d]^{\ext^{t-s}} \\
& E_{-2s, 2t}^\infty (W\tp S_*(X)^{\ten 2}) \ar@{->}[d]^{\cong}_{E^1 - iso} \\
\left( {F^{-2s}}/{F^{-2s-1}}\right) H_{2(t-s)} (\Tot (E\pi \times_\pi X^{\times 2})) \ar@{->}[r]^-\abut & E_{-2s, 2t}^\infty (S_*(E\pi \times_\pi X^{\times 2})).
}\]
\end{thm}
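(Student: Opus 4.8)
The plan is to reduce Theorem~\ref{T:bottomop} to the multiplication statements already in play together with the identification of the bottom operation $Q^{t-s}$ as the external squaring/multiplication map. First I would recall from \cite[Proposition 10.5]{me1} that on $E^\infty_{-s,t}$ the spectral sequence operation $\ext^{t-s}$ agrees with the multiplication $E^\infty_{-s,t}(S_*X) \ten E^\infty_{-s,t}(S_*X) \to E^\infty_{-2s,2t}(S_* X \ten S_* X)$ precomposed with the diagonal and the map $S_*X \ten S_*X \to W \tp S_*X^{\ten 2}$, $c\ten c' \mapsto e_0 \ten c\ten c'$. So the right-hand vertical composite in the diagram factors as: diagonal on $E^\infty$, then multiplication into $E^\infty(S_*X \ten S_*X)$, then the $E^1$-iso induced by $c\ten c'\mapsto e_0\ten c\ten c'$. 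On the geometric side, $Q^{t-s}$ factors analogously as the diagonal $\Tot X \to \Tot X \times \Tot X = \Tot(X\times X)$ followed by $\xi$ and the interchange $\intalg$; this is exactly Proposition~\ref{P:tensortocommtensor} specialized to $n=t-s$ combined with Proposition~\ref{P:geomtoalgprod}. Thus the theorem for $Q^{t-s}$ follows once we know the external multiplication theorem, \ref{T:externalmult} (i.e. Corollary~\ref{C:mult}), which says precisely that the product in the spectral sequence and the product on $H_*(\Tot)$ agree with each other under the abutment map, with the expected behavior on filtrations.

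The filtration bookkeeping is handled as follows. The multiplication theorem gives $\abut(\mu([x]\otimes[y])) = \mu(\abut[x]\otimes\abut[y])$ and $\mu(F^{-s}\otimes F^{-s'}) \subseteq F^{-s-s'}$ on the level of $H(\Tot(X\times X))$; the Lemma preceding the theorem statement supplies the corresponding bicomplex fact $\mu(F^{-s}\ten F^{-s'}) \subseteq F^{-s-s'}$ on $HTCN$, which is what transports the filtration estimate through $\phi_\mess$ (Theorem~\ref{T:tensorproducts}). Taking $x=y$ (so $s=s'$) gives the filtration containment $Q^{t-s}[F^{-s}H_{t-s}] \subseteq F^{-2s}H_{2(t-s)}$ once we also observe that applying the map $c\ten c'\mapsto e_0\ten c\ten c'$ does not change the filtration degree (it sends $F^{-p}$ of $S_*X \ten S_*X$ to $F^{-p}$ of $W\tp S_*X^{\ten 2}$, since $e_0$ sits in bidegree $(0,0)$, and likewise on the bicomplex side via $\intalg$ composed with the various $\nabla$, $AW$). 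The commutativity of the square is then just naturality of all the maps involved plus the two cited propositions; each small cell (diagonal compatibility, $\xi$ versus $e_0\ten-$, interchange versus the bicomplex interchange) has already been checked in sections~\ref{S:geomtoalg} and \ref{S:tensorproducts}.

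The main obstacle I expect is not any single step but rather making sure the external multiplication theorem \ref{T:externalmult}/Corollary~\ref{C:mult} is actually available with the precise form needed here — in particular that it is stated for the \emph{external} product $H(\Tot U)\ten H(\Tot V) \to H(\Tot(U\ten V))$ with the filtration estimate and the abutment compatibility, and that its proof does not secretly invoke the bottom operation (which would be circular). Since the outline in section~2 explicitly orders things so that multiplication (section~\ref{S:tensorproducts}, Theorem~\ref{T:externalmult}) precedes and feeds into Theorem~\ref{T:bottomop}, this should be fine, but the writeup must be careful to cite \ref{T:externalmult} and not \ref{C:mult} if the latter is the internal corollary. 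A secondary, purely clerical, difficulty is chasing the long composite of $\nabla$, $AW$, and the various interchange maps to confirm that the $E^1$-isomorphism labeled in the diagram is genuinely induced by $c\ten c' \mapsto e_0\ten c\ten c'$ and is compatible with $\phi_\mess$; this is a finite but somewhat tedious diagram chase that Theorem~\ref{T:tensorproducts} and Proposition~\ref{P:tensortocommtensor} are designed to absorb. Once those are in hand, the proof is essentially: specialize Proposition~\ref{P:tensortocommtensor} to $n=t-s$, insert Theorem~\ref{T:externalmult} for the diagonal square, and read off both conclusions.
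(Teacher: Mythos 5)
Your approach is essentially the paper's, but the dependency chain is reversed, and the circularity worry you raise is real and not fully resolved by appealing to the section-2 outline. In the paper, Theorem~\ref{T:bottomop} is proved \emph{before} Theorem~\ref{T:externalmult}, and the proof of Theorem~\ref{T:externalmult} explicitly reuses ``the last commutative diagram from the proof of Theorem~\ref{T:bottomop}'' to get its top square. So citing Theorem~\ref{T:externalmult} to prove Theorem~\ref{T:bottomop}, as your last paragraph proposes, would be circular in the paper's organization. The introduction's phrase ``convergence of the bottom operation is a special case of convergence of multiplication'' describes the conceptual picture, not the literal proof order.

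The fix is simple and you already have all the pieces: instead of invoking Theorem~\ref{T:externalmult}, cite the $\mess=\infty$ case of Theorem~\ref{T:tensorproducts} together with the preceding bicomplex Lemma (the one showing $F^{-s}\ten F^{-s'}\subseteq F^{-s-s'}$ after $\nabla AW$). These two give the induced map on filtration quotients
\[
(F^{-s}/F^{-s-1})H\Tot V \ten (F^{-s'}/F^{-s'-1})H\Tot V \longrightarrow (F^{-s-s'}/F^{-s-s'-1})H\Tot(V\ten V)
\]
and the commutative square comparing it with the corresponding map on $HTCN$. Setting $s=s'$ and combining with Propositions~\ref{T:geomtoalg} and~\ref{P:tensortocommtensor} (with $n=t-s$, as you note) then gives both the filtration containment and the commuting diagram. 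That is exactly what the paper does; it then derives Theorem~\ref{T:externalmult} afterwards from the same square. Your identification of the bottom operation with external squaring via \cite[Proposition 10.5]{me1}, your use of Proposition~\ref{P:geomtoalgprod} for the geometric-to-algebraic step, and your observation that $\xi$/$e_0\ten{-}$ does not alter the column filtration are all correct and appear in the paper's argument.
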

\begin{proof} In light of Lemma~\ref{L:geomalgint}, we consider $V=S_*X$.
 The filtration on $H\Tot V$ is given by
\[ F^{-s} H \Tot V = \ker (H\Tot V \overset{\phi}{\to} HTCN V \to HT_{s-1} CN V). \]
The $\mess = \infty$ case of 
Theorem~\ref{T:tensorproducts} asserts the commutativity of the following diagram:
\[ \xymatrix@C+6pt{
H\Tot V \ten H\Tot V \ar@{->}[r]^-{\phi_\infty \ten \phi_\infty} \ar@{->}[dd]^{\intb}   & HTCNV \ten HTCN V \ar@{->}[d] \\
 & HT[CNV \ten CN V] \ar@{->}[d]^{\nabla AW}  \\
H\Tot (V\ten V) \ar@{->}[r]^{\phi_\infty} & HTCN(V\ten V).
}\]
Since the diagram
\[ \xymatrix{
& HT[CNV \ten CN V] \ar@{->}[d]^{\nabla AW} \ar@{->}[r] & HT_{s+s'-1}[CNV \ten CN V] \ar@{->}[d]^{\nabla AW} \\
H\Tot (V\ten V) \ar@{->}[r]^{\phi_\infty} & HTCN(V\ten V) \ar@{->}[r]& HT_{s+s'-1}CN(V\ten V)
}\]
also commutes, we see that we have an induced map
\[  ( F^{-s} / F^{-s-1}) H\Tot V\ten ( F^{-s'} / F^{-s'-1}) H\Tot V  \to (F^{-s-s'} / F^{-s-s'-1}) H\Tot (V\ten V)  \] and an induced diagram
%FORMATTING
\[ \scalebox{.75}{ 
	\xymatrix{
	(F^{-s} / F^{-s-1}) H\Tot V \ten (F^{-s'} / F^{-s'-1}) H\Tot V \ar@{->}[r] \ar@{->}[d]   & (F^{-s} / F^{-s-1}) HTCNV \ten  (F^{-s'} / F^{-s'-1}) HTCN V \ar@{->}[d] \\
	(F^{-s-s'}/F^{-s-s'-1}) H\Tot (V\ten V) \ar@{->}[r] & (F^{-s-s'}/F^{-s-s'-1}) HTCN(V\ten V).
	} 
}
\]
Setting $s=s'$ and combining this with Propositions~\ref{T:geomtoalg} and~\ref{P:tensortocommtensor} completes the proof.
\end{proof}

The more general statement is that the abutment is compatible with external multiplication:
\begin{thm}\label{T:externalmult}
The following diagram commutes.
%FORMATTING
\[ \xymatrix%@C-10pt
{
(F^{-s} / F^{-s-1}) H\Tot X \ten (F^{-s'} / F^{-s'-1}) H\Tot X \ar@{->}[r]^-{\abut \ten \abut} \ar@{->}[d]   & E^\infty_{-s} (S_*(X)) \ten  E^\infty_{-s'} (S_*(X)) \ar@{->}[d] \\
(F^{-s-s'}/F^{-s-s'-1}) H\Tot (X\times X) \ar@{->}[r]^-\abut \ar@{->}[d] & E^\infty_{-s-s'} (S_*(X\times X)) \ar@{->}[d]\\
(F^{-s-s'}/F^{-s-s'-1}) H\Tot (E\pi \times_\pi X^{\times 2}) \ar@{->}[r]^-\abut & E^\infty_{-s-s'} (S_*(E\pi \times_\pi X^{\times 2})) \\
}\]
\end{thm}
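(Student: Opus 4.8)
The plan is to deduce Theorem~\ref{T:externalmult} from the already-established compatibility of the quasi-isomorphism $\phi_\infty$ with tensor products (Theorem~\ref{T:tensorproducts} in the case $\mess = \infty$) together with the elementary bicomplex lemma on filtration degrees. The top square is essentially a restatement of the $E^\infty$-level consequence of Theorem~\ref{T:tensorproducts}: the diagram in the proof of Theorem~\ref{T:bottomop} already shows that $H(\nabla AW)\circ(\phi_\infty\ten\phi_\infty)$ carries $F^{-s}\ten F^{-s'}$ into $F^{-s-s'}$ on $H(T CN(V\ten V))$, passes to the associated graded, and there becomes the multiplication map on $E^\infty$ pages induced by the external product of bicomplexes. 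One then reads off that the abutment maps $\abut$ intertwine the product $H\Tot X\ten H\Tot X\to H\Tot(X\times X)$ (via $\algr$ and $\intb$, using $\K(X\times X)\cong\K X\ten\K X$ and Proposition~\ref{P:geomtoalgprod}) with the $E^\infty$-product; the key point is simply that $\abut$ was \emph{defined} (in section~\ref{S:totalization}) as the map on associated graded induced by $\phi_\infty$, so compatibility of $\phi_\infty$ with $\ten$ forces compatibility of $\abut$ with products.

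For the bottom square, I would invoke naturality of $\abut$ with respect to the map of cosimplicial spaces $X\times X\to E\pi\times_\pi X^{\times 2}$ (the quotient by the $\pi$-action, precomposed with the inclusion of the fiber, i.e.\ the map underlying $\xi$ at the space level). Since $\abut$ is natural in the cosimplicial space, applying $S_*$ and $\phi_\infty$ to this map gives exactly the claimed commuting square relating $H\Tot(X\times X)\to H\Tot(E\pi\times_\pi X^{\times 2})$ and $E^\infty_{-s-s'}(S_*(X\times X))\to E^\infty_{-s-s'}(S_*(E\pi\times_\pi X^{\times 2}))$.

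Concretely, the steps in order: (1) specialize Theorem~\ref{T:tensorproducts} to $\mess=\infty$ and $U=V=S_*X$ and, using the bicomplex filtration lemma, pass to associated graded to obtain the $E^\infty$-multiplication square for $V\ten V$; (2) use Proposition~\ref{P:geomtoalgprod} (applied with $X=X'$) together with $\algr$ to replace $\Tot\K X\ten\Tot\K X\to\Tot(\K X\ten\K X)$ by $\Tot X\times\Tot X\xrightarrow{\cong}\Tot(X\times X)$ on the geometric side, yielding the top square as stated; (3) apply naturality of $\abut$ along $X\times X\to E\pi\times_\pi X^{\times 2}$ to get the bottom square; (4) stack the two squares. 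Most of this is assembling diagrams that are already available, so it can be presented compactly.

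The main obstacle is bookkeeping rather than mathematics: one must be careful that the right-hand vertical in the top square is genuinely the $E^\infty$-page multiplication associated to the external product of bicomplexes, i.e.\ that the map induced by $\nabla AW$ on associated graded agrees with the pairing $E^\infty_{-s}\ten E^\infty_{-s'}\to E^\infty_{-s-s'}$ built into the spectral sequence of $CN(V\ten V)$ — this is where Theorem~\ref{T:tensorproducts} is doing the real work, and it should be cited explicitly. A secondary point to verify is that the map of cosimplicial spaces used in step~(3) is the correct one (fiber inclusion followed by the orbit projection) so that the bottom square matches the composite appearing elsewhere in the paper; once that identification is pinned down, naturality of $\abut$ closes the argument.
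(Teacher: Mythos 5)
Your proposal is correct and follows essentially the same route as the paper: the paper's proof also handles the bottom square by naturality of $\abut$ along the map $X\times X\to E\pi\times_\pi X^{\times 2}$, and obtains the top square by combining Proposition~\ref{P:geomtoalgprod} with the filtered consequence of the $\mess=\infty$ case of Theorem~\ref{T:tensorproducts} (stated in the paper as ``the last commutative diagram from the proof of Theorem~\ref{T:bottomop}'') and the identification $\K X\ten\K X=\K(X\times X)$. You have simply unpacked the same ingredients in a bit more detail.
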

\begin{proof}
The bottom square commutes by naturality. Using the $\mess = \infty$ case of Proposition~\ref{P:geomtoalgprod} and the definition of the filtrations, we see that the square
\[ \scalebox{.75}{ 
	\xymatrix{
	(F^{-s} / F^{-s-1}) H\Tot X \ten (F^{-s'} / F^{-s'-1}) H\Tot X \ar@{->}[r] \ar@{->}[d]   
	& (F^{-s} / F^{-s-1}) H \Tot \K X \ten  (F^{-s'} / F^{-s'-1}) H\Tot  \K X \ar@{->}[d] \\
	(F^{-s-s'}/F^{-s-s'-1}) H\Tot (X\times X) \ar@{->}[r] 
	& (F^{-s-s'}/F^{-s-s'-1}) H\Tot (\K X \ten \K X)
	} 
}
\]
commutes. Combining this with the last commutative diagram from the proof of 
Theorem~\ref{T:bottomop} and the fact that $\K X \ten \K X = \K (X\times X)$, we see that the top square commutes.
\end{proof}

When $X$ is a cosimplicial $\msc$-space, we 
can use the previous theorem and apply the structure map
\[ E\pi \times_\pi X^{\times 2} \to X \] to relate the two internal multiplications.
%obtained by following the above diagram with the structure map $E\pi \times_\pi X^{\times 2} \to X$.

\begin{cor}\label{C:mult}If $X$ is a cosimplicial $\msc$-space, then the multiplication $\mu$ on $H\Tot X$ is compatible with the filtration
in the sense that
\[ \mu(F^{-s} \ten F^{-s'}) \ci F^{-s-s'}. \] Furthermore, $\mu$ descends to the quotient and the diagram
%FORMATTING
\[ \xymatrix%@C-10pt
{
(F^{-s} / F^{-s-1}) H\Tot X \ten (F^{-s'} / F^{-s'-1}) H\Tot X \ar@{->}[r]^-{\abut \ten \abut} \ar@{->}[d]^\mu   & E^\infty_{-s} (S_*(X)) \ten  E^\infty_{-s'} (S_*(X)) \ar@{->}[d]^\mu \\
(F^{-s-s'}/F^{-s-s'-1}) H\Tot X \ar@{->}[r]^-\abut & E^\infty_{-s-s'} (S_*(X)) \\
}\]
commutes.  \qed
\end{cor}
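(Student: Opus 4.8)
The plan is to deduce Corollary~\ref{C:mult} from Theorem~\ref{T:externalmult} by post-composing with the operad structure map, in the spirit of the proof of Theorem~\ref{T:internal}. Recall from section~\ref{S:tpoa} that the $\msc$-action on $\Tot X$ ultimately comes from the cosimplicial structure map $\theta\colon \msc(2)\times X^{\times 2}\to X$, which descends to $\bar\theta\colon \msc(2)\times_\pi X^{\times 2}\to X$; the binary multiplication on $H\Tot X$ is the one induced by $\bar\theta$ together with a choice of point of $\msc(2)$, and likewise for the product on $E^\infty(S_*X)$ supplied by \cite{me1}. Since $\msc$ is an $E_\infty$-operad there is a $\pi$-equivariant map $\alpha\colon E\pi\to\msc(2)$ (for the Barratt--Eccles operad one may take $\alpha$ to be the identity; in general it exists because $E\pi$ is a cofibrant $\pi$-space and $\msc(2)$ is an equivariantly contractible free one). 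I set $\omega\colon E\pi\times_\pi X^{\times 2}\to X$ to be the composite of $\alpha\times_\pi\mathrm{id}$ with $\bar\theta$, a map of cosimplicial spaces.

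First I would record that $\omega$, being a map of cosimplicial spaces, induces a filtration-preserving map $H\Tot(\omega)$ (the Bousfield filtration on $H\Tot(-)$ is functorial), so there is a naturality square for the abutment $\abut$ attached to $\omega$. Stacking this square beneath the third row of Theorem~\ref{T:externalmult}---whose image lies in $F^{-s-s'}$ of $H\Tot(E\pi\times_\pi X^{\times 2})$ and in $E^\infty_{-s-s'}(S_*(E\pi\times_\pi X^{\times 2}))$---produces a commuting square from $(F^{-s}/F^{-s-1})H\Tot X\ten(F^{-s'}/F^{-s'-1})H\Tot X$ to $E^\infty_{-s-s'}(S_*X)$ whose left leg is $H\Tot(\omega)$ composed with the external multiplication and whose right leg is $E^\infty(S_*\omega)$ composed with the external multiplication on $E^\infty$. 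The second ingredient is that the external multiplication factors through the map $\jmath\colon X\times X\to E\pi\times_\pi X^{\times 2}$, $(x,x')\mapsto[*,x,x']$---the geometric analogue of $\xi$ from Definition~\ref{D:xi}, obtained by unwinding the interchange map $\inta$ of \eqref{E:interchange}---so that $\omega\circ\jmath$ is precisely $\theta(\alpha(*);-,-)\colon X\times X\to X$, a choice of binary operad multiplication. This identifies both legs of the stacked square with the internal products $\mu$ on $H\Tot X$ and on $E^\infty(S_*X)$. For the filtration statement: Theorem~\ref{T:externalmult} says the external multiplication carries $F^{-s}\ten F^{-s'}$ into $F^{-s-s'}$, and $H\Tot(\omega)$ preserves filtration, whence $\mu(F^{-s}\ten F^{-s'})\ci F^{-s-s'}$; a filtration-preserving bilinear map descends to associated graded, which yields the displayed diagram.

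The only real subtlety, and the step I expect to require the most care, is the identification of the composite ``$\omega\circ\jmath$ followed by external multiplication'' with the \emph{standard} product $\mu$ rather than some \textit{a priori} different product. This breaks into two checks: that $\jmath$ is genuinely the map underlying the external multiplication in Theorem~\ref{T:externalmult} (a diagram chase through the interchange map and the $\xi$-map), and that the products $\mu$ on $H\Tot X$ and $E^\infty(S_*X)$ are independent of the chosen point of $\msc(2)$. The latter holds because $\msc(2)$ is connected, equivalently because $\Tot(c\,\msc(2)\times X^{\times 2})\cong\msc(2)\times(\Tot X)^{\times 2}$ with $\msc(2)$ contractible, so the $\msc(2)$-coordinate is invisible both to homology and to the $E^\infty$-page. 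Everything else is naturality and bookkeeping, as in the proof of Theorem~\ref{T:internal}.
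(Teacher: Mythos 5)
Your proposal is correct and follows the same route the paper takes: the paper's entire argument for Corollary~\ref{C:mult} is the one-line observation preceding its statement, namely to post-compose Theorem~\ref{T:externalmult} with the structure map $E\pi\times_\pi X^{\times 2}\to X$ and use naturality of the abutment. You have simply filled in the details the paper leaves implicit (the $\pi$-equivariant map $E\pi\to\msc(2)$, the factorization of the external product through $\jmath$, and the independence of $\mu$ from the chosen point of the connected space $\msc(2)$), so this is the same proof, spelled out more carefully.
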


%%%%%%%%%%%%%%%%%%%%%%%%%%%%%%%%%%%%%%%%%%%%%%%%%%
%%%%%%%%%%%%%%%%%%%%%%%%%%%%%%%%%%%%%%%%%%%%%%%%%%
\section{Comodules over \texorpdfstring{$H(B\pi)$}{H(B\unichar{960})}}\label{S:comodules}
%%%%%%%%%%%%%%%%%%%%%%%%%%%%%%%%%%%%%%%%%%%%%%%%%%
%%%%%%%%%%%%%%%%%%%%%%%%%%%%%%%%%%%%%%%%%%%%%%%%%%

Let $V$ be a cosimplicial simplicial $\K$-module. 
In this section we show that both
\[ N(\K E\pi \tp (\Tot(V))^{\ten 2}) \text { and } TCN(\K E\pi \tp V^{\ten 2}) \] 
are comodules over the coalgebra $H(B\pi)$. Using the fact that $\phi_\infty$ is a quasi-isomorphism, this gives that $H(\Tot(\K E\pi \tp V^{\ten 2}))$ is also a comodule over $H(B\pi)$. We also will state Theorem~\ref{T:comodules}, which implies that the homology of the interchange map
\[ H(\intalg): H(\K E\pi \tp (\Tot(V))^{\ten 2}) \to H(\Tot(\K E\pi \tp V^{\ten 2})) \] is a map of $H(B\pi)$-comodules. This fact will be important in the next section when we specialize to a case where the left hand side is cofree over $H(B\pi)$.

We will write $\psi$ for coproducts, for example
 \[ \psi_{E\pi}: \K E\pi \to \K [ E\pi \times E\pi ] \cong \K E\pi \ten \K E\pi\] is the map induced from the diagonal of $E\pi$.
 
\begin{lem}\label{L:rhoone}
Let $Z$ be a simplicial $\K \pi$-module.
 The homotopy orbits
\[ \zh := \K E\pi \tp Z \]
has a natural coassociative coaction by $\K B\pi$ induced from the diagonal on $E\pi$. \end{lem}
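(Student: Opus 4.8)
The plan is to exhibit the coaction explicitly and then verify coassociativity directly from the definitions, using the fact that the relevant maps all descend from combinatorial maps of simplicial sets. First I would recall that $\K B\pi$ is a coalgebra: the diagonal $B\pi \to B\pi \times B\pi$ induces $\psi_{B\pi}\colon \K B\pi \to \K B\pi \ten \K B\pi$, and this is coassociative because the iterated diagonal on $B\pi$ is. The $\pi$-action on $E\pi$ (by the models of \cite{gj}) is free, $E\pi/\pi = B\pi$, and the diagonal $E\pi \to E\pi \times E\pi$ is $\pi$-equivariant with respect to the diagonal $\pi$-action on the target, hence after quotienting one factor it induces a $\pi$-equivariant map $E\pi \to E\pi \times B\pi$ (equivalently $\K E\pi \to \K E\pi \ten \K B\pi$), where $\pi$ acts trivially on $B\pi$. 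The point is that this map is the restriction of $\psi_{E\pi}$ followed by the projection $\K E\pi \to \K B\pi$ on the second tensor factor, and it is coassociative in the evident sense: the two composites $\K E\pi \to \K E\pi \ten \K B\pi \ten \K B\pi$ agree because both arise from the iterated $E\pi$-diagonal.

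Next I would define the coaction $\coact$ on $\zh = \K E\pi \tp Z = \K E\pi \ten_{\K\pi} Z$ by tensoring the map $\K E\pi \to \K E\pi \ten \K B\pi$ (over $\K$) with $Z$ and passing to $\K\pi$-coinvariants; since $\pi$ acts trivially on $\K B\pi$, the tensor factor $\K B\pi$ carries through the coinvariants unchanged, so we obtain a well-defined natural map
\[ \coact\colon \zh \to \zh \ten \K B\pi. \]
(One should take care that the map $\K E\pi \to \K E\pi \ten \K B\pi$ really is $\K\pi$-linear for the $\pi$-action that defines $\tp$ — this is exactly the equivariance noted above.) Coassociativity of $\coact$ then follows from coassociativity of the map $\K E\pi \to \K E\pi \ten \K B\pi$ together with coassociativity of $\psi_{B\pi}$, which I would check by reducing both composites $\zh \to \zh \ten \K B\pi \ten \K B\pi$ to the single map induced by the triple diagonal on $E\pi$ (with two of the three factors projected to $B\pi$), again using that the $\K\pi$-coinvariants only interact with the first tensor factor.

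The main obstacle, such as it is, is bookkeeping rather than mathematics: one must be careful that the diagonal on $E\pi$ really does descend to the claimed map $\K E\pi \to \K E\pi \ten \K B\pi$ of $\K\pi$-modules (this is where the specific simplicial models for $E\pi$ and $B\pi$ from section~\ref{S:notationbackground} are first used, as the author notes), and that ``coaction'' is interpreted at the level of simplicial $\K$-modules — so that applying $N$ and $H$ later, via the Alexander–Whitney or shuffle maps, produces the $H(B\pi)$-comodule structure referred to in the surrounding discussion. Naturality in $Z$ is immediate since every map in the construction is natural in $Z$. I expect the actual write-up to be short: define the map from the diagonal, observe $\K\pi$-equivariance, pass to coinvariants, and cite coassociativity of the $E\pi$-diagonal.
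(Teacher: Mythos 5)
Your proposal is correct and uses essentially the same idea as the paper: the coaction is induced by the diagonal on $E\pi$ with one tensor factor projected to $B\pi$, and coassociativity is inherited from coassociativity of the $E\pi$-diagonal. The only presentational differences are (i) you place $\K B\pi$ on the right, producing $\zh \to \zh \ten \K B\pi$, whereas the paper's $\coact_1$ lands in $\K B\pi \ten \zh$ — immaterial here since $\K B\pi$ is cocommutative, though the paper's later constructions ($\coact_2$, $\coact_3$) consistently use the left convention, so one would need to match that; and (ii) you check $\K\pi$-equivariance of $\K E\pi \to \K E\pi \ten \K B\pi$ directly and then pass to coinvariants, whereas the paper organizes the same descent by writing $\zh \cong \K E\pi \tp (\K * \ten Z)$, applying $\psi_{E\pi}$, factoring through the $\pi\times\pi$-coinvariants of $(\K E\pi \ten \K E\pi) \ten (\K * \ten Z)$, and splitting the result as $(\K E\pi \tp \K *) \ten (\K E\pi \tp Z) = \K B\pi \ten \zh$. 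Both routes give the formula $\coact(a\ten z) = \bar a \ten (a \ten z)$ and the same coassociativity square.
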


\begin{proof} The coaction is defined by
\begin{align*} \coact_1: \zh = \K E\pi \tp Z &\cong \K E\pi \tp (\K * \ten Z) \\ &\to (\K E\pi \ten \K E\pi) \tp (\K * \ten Z) \\ 
& \to (\K E\pi \ten \K E\pi) \ten_{\pi \times \pi} (\K * \ten Z) \\
&\cong (\K E\pi \tp \K *) \ten (\K E\pi \tp Z)  = \K B\pi \ten \zh. \end{align*}
The comultiplication on $\K B\pi$ is also induced from $\psi_{E\pi}$, which shows that $\coact_1$ is coassociative:
\begin{equation}\label{E:Zhpi} \xymatrix{
\zh \ar@{->}[r]^{\coact_1} \ar@{->}[d]^{\coact_1}& \K B\pi \ten \zh \ar@{->}[d]^{1\ten \coact_1} \\
\K B\pi \ten \zh \ar@{->}[r]^-{ \psi_{B\pi} \ten 1} &  \K B\pi \ten \K B\pi \ten \zh.
}\end{equation}

\end{proof}

\begin{rem} Suppose $a\in E\pi_q$ and $z\in Z_q$. Then 
the formula for $\coact_1$ is
\[ \coact_1(a\ten z) = \bar{a} \ten (a\ten z). \]
\end{rem}

For simplicity, we use the model for $E\pi$ from section~\ref{S:notationbackground}, so that $N\K E\pi = W$. 
Notice that the map
\[ \psi_W : W=N\K E\pi \overset{N\psi_{E\pi}}{\longrightarrow} N(\K E\pi \ten \K E\pi) \overset{AW}{\longrightarrow} W\ten W \] is coassociative (by associativity of the Alexander-Whitney map) and sends $e_m$ to 
$\sum_{i+j=m} e_i \ten \sigma^i e_j. $
Write \[ \bw = W\tp \K = N\K B\pi = H (B\pi), \] which has comultiplication induced from $\psi_W$. Namely, $\psi_{\bw}(\bar e_m) = \sum \bar e_i \ten \bar e_j$.

\begin{prop}\label{P:nzh} Let $Z$ be a simplicial $\K$-module. Then $N\zh$ is naturally a comodule over $\bw$.
\end{prop}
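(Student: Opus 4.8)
The plan is to apply Lemma~\ref{L:rhoone} to the $\K\pi$-module $Z\ten Z$ and then transport the resulting $\K B\pi$-coaction through normalization, using the shuffle and Alexander-Whitney maps to compare $N$ of a tensor product with a tensor product of normalizations. Concretely, $\K E\pi \tp (Z\ten Z)$ — wait, we want $\zh$ to be $\K E\pi \tp (Z\ten Z)$ here, so take the simplicial $\K\pi$-module to be $Z\ten Z$ with the flip action. Lemma~\ref{L:rhoone} gives a coassociative coaction $\coact_1 \colon \zh \to \K B\pi \ten \zh$. Applying $N$ and postcomposing with the Alexander-Whitney map $AW\colon N(\K B\pi \ten \zh) \to N\K B\pi \ten N\zh = \bw \ten N\zh$ yields a candidate coaction $N\zh \to \bw \ten N\zh$.

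The key steps, in order, are: (1) record that $N\zh$ gets a map to $\bw \ten N\zh$ as the composite $N\zh \xrightarrow{N\coact_1} N(\K B\pi \ten \zh) \xrightarrow{AW} \bw \ten N\zh$; (2) verify counitality, i.e. that composing with the augmentation $\bw \to \K$ recovers the identity on $N\zh$ — this follows since $AW$ is compatible with augmentations and $\coact_1$ is counital by Lemma~\ref{L:rhoone}; (3) verify coassociativity of this composite. Step (3) is where the real work lies: one must chase the diagram
\[ \xymatrix{
N\zh \ar@{->}[r] \ar@{->}[d] & \bw \ten N\zh \ar@{->}[d] \\
\bw \ten N\zh \ar@{->}[r] & \bw \ten \bw \ten N\zh
}\]
and reduce it to the coassociativity square \eqref{E:Zhpi} for $\coact_1$ together with the coassociativity of the Alexander-Whitney map (so that the two ways of splitting $N(\K B\pi \ten \K B\pi \ten \zh)$ into $\bw \ten \bw \ten N\zh$ agree). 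Since $\psi_{\bw}$ is itself defined via $AW$ applied to $N\psi_{E\pi}$ (as recorded in the paragraph before the proposition), the compatibility is exactly the associativity of $AW$ quoted after Definition~\ref{D:awshuff}.

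Finally I would note that naturality in $Z$ is immediate, since every map in the construction ($\coact_1$, $N$, $AW$) is natural. The main obstacle is bookkeeping in step (3): making sure the Alexander-Whitney maps are inserted on the correct tensor factors and that the two resulting iterated coactions $N\zh \to \bw^{\ten 2}\ten N\zh$ literally coincide rather than merely agree up to chain homotopy — here it is essential that $AW$ is strictly (not just homotopy) coassociative, which is a standard property of the Alexander-Whitney map and is already invoked for $\psi_W$ in the text. No homotopies are needed; the coaction is defined on the nose.
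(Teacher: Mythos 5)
Your proof is correct and follows essentially the same route as the paper: define $\coact_2 := AW \circ N\coact_1$, then deduce coassociativity from strict associativity of the Alexander--Whitney map together with the coassociativity square for $\coact_1$ from Lemma~\ref{L:rhoone}. The only minor discrepancy is notational: you read $\zh$ as $\K E\pi\tp(Z\ten Z)$ to supply a $\pi$-action, whereas the paper (as its proof of Proposition~\ref{P:nnzh} makes clear, where $Z$ is set to $V^{\ten 2}$) intends $\zh=\K E\pi\tp Z$ with $Z$ already a $\K\pi$-module; either reading yields the same argument.
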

\begin{proof}
We write $\coact_2$ for the composite
\begin{equation} N \zh \overset{N\coact_1}{\longrightarrow} N( \K B\pi \ten \zh ) \overset{AW}{\longrightarrow} \bw \ten N\zh. \label{E:act2} \end{equation}
Associativity of the Alexander-Whitney map and the commutativity of (\ref{E:Zhpi}) gives commutativity of the following diagram.
\[  \xymatrix{
N\zh \ar@{->}[r]^{\coact_2} \ar@{->}[d]^{\coact_2}& \bw \ten N\zh \ar@{->}[d]^{1\ten \coact_2} \\
\bw \ten N\zh \ar@{->}[r]^-{ \psi_{\bw} \ten 1} &  \bw \ten \bw \ten N\zh
}\]
Thus the coaction is coassociative.
\end{proof}

We now consider a cosimplicial simplicial $\K$-module $V$ and the interchange map
\[ \intalg: \K E\pi \tp (\Tot (V))^{\ten 2} \to \Tot( \K E\pi \tp (V^{\ten 2})) \] from \eqref{E:algint}. Normalizing, we have
\[ N(\K E\pi \tp (\Tot(V))^{\ten 2}) \overset{N(\intalg)}{\longrightarrow} N \Tot (\K E\pi \tp V^{\ten 2})) \overset{\simeq}{\to} TCN  (\K E\pi \tp V^{\ten 2})), \]
where the quasi-isomorphism on the right is the map $\phi_\infty$.
Proposition~\ref{P:nzh} tells us that $N(\K E\pi \tp (\Tot(V))^{\ten 2})$ is a comodule over $\bw$. 
We also have
\begin{prop}\label{P:nnzh}
Let $V$ be a cosimplicial simplicial $\K$-module. Then \[TCN (\K E\pi \tp V^{\ten 2})\] is naturally a comodule over $\bw$.
\end{prop}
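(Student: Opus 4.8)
The plan is to obtain the $\bw$-coaction on $TCN(\K E\pi \tp V^{\ten 2})$ by pushing the degreewise coaction of Proposition~\ref{P:nzh} through conormalization and the total complex. Throughout set $U := \K E\pi \tp V^{\ten 2}$, a cosimplicial simplicial $\K$-module with $U^p = \K E\pi \tp (V^p)^{\ten 2}$, where $\pi$ acts on $V^{\ten 2}$ by the flip.

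First I would record that the coaction is available cosimplicial-degreewise and naturally so. For each $p$, Proposition~\ref{P:nzh} applied to the simplicial $\K\pi$-module $(V^p)^{\ten 2}$ provides a coassociative coaction $\coact_2 : N(U^p) \to \bw \ten N(U^p)$. Every cosimplicial operator $\alpha^* : V^p \to V^{p'}$ induces a $\pi$-equivariant map $(\alpha^*)^{\ten 2}$ of the squares, and the coaction $\coact_1$ of Lemma~\ref{L:rhoone}, hence also $\coact_2 = AW \circ N\coact_1$, is natural for $\pi$-equivariant maps of the argument; so the $\coact_2$'s assemble into a morphism of cosimplicial chain complexes $N(U) \to c\bw \ten N(U)$, coassociative over the constant coalgebra $c\bw$, where $c\bw$ denotes $\bw$ placed constantly in the cosimplicial direction.

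Next I would apply conormalization $C$ in the cosimplicial direction. Because $\ten = \ten_\K$ is exact over the field $\K$ and $c\bw$ is constant, the functor $c\bw \ten (-)$ commutes with the cokernels of \eqref{E:conorm}, yielding a natural identification of second-quadrant bicomplexes $C(c\bw \ten N U) \cong \bw \ten C(N U)$, with $\bw$ sitting in the internal (chain) grading and carrying the zero differential. Thus $C$ applied to the coaction is a coassociative morphism of bicomplexes $C(NU) \to \bw \ten C(NU)$, which in bidegree $(-p,q)$ is just $C^p$ of the degreewise $\coact_2$ in simplicial degree $q$.

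The main obstacle — indeed the only step that needs care — is the passage to the \emph{product} total complex $T$, where $T(\bw \ten C(NU))$ and $\bw \ten T(C(NU))$ may differ once infinitely many columns contribute. I would handle this by truncation: for $\mess < \infty$ one has $T_\mess(\bw \ten C(NU)) = \bw \ten T_\mess C(NU)$ on the nose, since $T_\mess$ involves only the columns $0, \dots, \mess$ and, in a fixed total degree, $\bw$ contributes with only finitely many weights. Hence each $T_\mess CN(U)$ is literally a coassociative $\bw$-comodule, compatibly with the tower maps $T_{\mess + 1} \to T_\mess$; passing to the inverse limit $TCN(U) = \lim_\mess T_\mess CN(U)$ — legitimate since $\bigcap_\mess F^{-\mess - 1} TB = 0$ for the filtration of \eqref{E:totalell} — gives the asserted coassociative coaction $\coact : TCN(\K E\pi \tp V^{\ten 2}) \to \bw \ten TCN(\K E\pi \tp V^{\ten 2})$, all of it natural in $V$. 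In the application of the next section, $V = \K \bkspace_{(\infty, s, t)}$ is finite-dimensional in each bidegree, so the limit is computed termwise and the construction is entirely literal.
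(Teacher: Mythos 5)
Your proof takes the same overall route as the paper's, but streamlines the conormalization step: you observe directly that $c\bw \ten (-)$ commutes with the cokernels defining $C$ (valid since $\ten_\K$ is additive and exact over a field), giving the identification $C(c\bw \ten NU) \cong \bw \ten CNU$. The paper instead routes through the cosimplicial shuffle map $\nabla \colon C(\bw \ten N\zh) \to C(\bw) \ten CN\zh$ together with $C(\bw) = \bw^v$. These amount to the same thing: because $\bw$ is a \emph{constant} cosimplicial object, every codegeneracy on it acts as the identity, $C(\bw)^p$ vanishes for $p \ge 1$, only the $p=0$ term of the shuffle formula survives, and $\nabla$ collapses to your canonical identification. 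Your presentation of this step is a genuine, if modest, simplification, and the coassociativity you assert is equivalent to the commutativity of the paper's shuffle-map diagram.

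The passage to the full total complex is where your proposal has a gap, and it tracks a point the paper's own proof also leaves somewhat exposed. Your claim $T_\mess(\bw \ten CNU) = \bw \ten T_\mess CNU$ for finite $\mess$ is correct (finitely many columns, hence finite products and sums in each total degree). But after applying $\lim_\mess$ you obtain a map $TCNU \to \lim_\mess (\bw \ten T_\mess CNU)$, and the canonical map $\bw \ten \lim_\mess T_\mess CNU \to \lim_\mess(\bw \ten T_\mess CNU)$ is injective but in general \emph{not} surjective: an element of the inverse limit may have nonzero $\bw_c$-components for unboundedly many $c$, and such an element does not come from the ordinary tensor product $\bw \ten TCNU$, since $\bw = H(B\pi)$ is nonzero in every nonnegative degree. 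Your closing remark about $V = \K\bkspace_{(\infty,s,t)}$ being finite-dimensional in each bidegree does not repair this, because $CN(\K E\pi \tp V^{\ten 2})$ has nonzero entries in infinitely many columns (cf.\ figure~\ref{F:inftyconormed}), so the product totalization remains genuinely infinite. For comparison, the paper handles this step by asserting an isomorphism $T(\bw^v \ten B) \cong \bw \ten TB$ via the natural transformation $T(-)\ten T(-) \to T(-\ten -)$; but that natural transformation is exactly the injection $\bw \ten TB \hookrightarrow T(\bw^v \ten B)$, whose surjectivity here is the same point at issue. Your version makes the subtlety more visible than the paper does, but in either argument one must still verify that the coaction actually lands in the uncompleted tensor $\bw \ten TCNU$, or else work with a suitably completed tensor product.
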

\begin{proof}
We let $Z=V^{\ten 2}$ and consider the coaction on the cosimplicial chain complex $N\zh$ by applying $\coact_2$ from Proposition~\ref{P:nzh} in each cosimplicial degree. We have the following diagram,
\[ \xymatrix{ 
CN\zh \ar@{->}[r]^{C(\coact_2)} \ar@{->}[d]^{C(\coact_2)} 
& C(\bw \ten N\zh) \ar@{->}[r]^\nabla \ar@{->}[d]_{C(1\ten \coact_2)} 
& C(\bw) \ten CN\zh \ar@{->}[d]_{1\ten C(\coact_2)} 
\\C(\bw \ten N\zh) \ar@{->}[r]^-{C(\psi_{\bw} \ten 1)} \ar@{->}[d]^\nabla 
& C(\bw \ten \bw \ten N\zh) \ar@{->}[r]^\nabla \ar@{->}[d]^\nabla 
& C(\bw) \ten C(\bw \ten N\zh)  \ar@{->}[d]^\nabla 
\\C(\bw) \ten CN\zh \ar@{->}[r]^-{C(\psi_{\bw})\ten 1} & C(\bw \ten \bw) \ten CN\zh \ar@{->}[r]^\nabla & C(\bw) \ten C(\bw) \ten CN\zh
}\]
which commutes by Proposition~\ref{P:nzh} (upper-left), associativity of the shuffle map $\nabla$ (bottom-right), and the fact that $\nabla$ is a natural transformation. Here we are considering $\bw$ as a constant cosimplicial complex. Furthermore, $C(\bw) = \bw^v$ where $\bw^v$ is the bicomplex which is zero outside of bidegrees $(0,*)$ and $\bw^v_{0,q} = \bw_q$. Finally, we have that $T(\bw^v \ten B) \cong \bw \ten TB$  when  $B$  is a second-quadrant bicomplex using the natural transformation $T(-) \ten T(-) \to T(-\ten -)$. We define the coaction 
\begin{equation} \coact_3:  TCN\zh \to \bw \ten TCN\zh \label{E:act3}\end{equation} as the map induced from $\nabla \coact_2$, which we see is coassociative by the above commutative diagram and naturality of the isomorphism $T(\bw^v \ten B) \cong \bw \ten TB$. 
%
%The fact that $TCN  (\K E\pi \tp V^{\ten 2}))$ is a $\bw$-comodule comes from the diagram
%\[ \xymatrix{
%TCN\zh \ar@{->}[r]^{\coact_2} \ar@{->}[d]^{\coact_2}& TC (\bw \ten N\zh) \ar@{->}[d]^{1\ten \coact_2} \\
%TC(\bw \ten N\zh) \ar@{->}[r]^-{ \psi_{\bw} \ten 1} &  TC(\bw \ten \bw \ten N\zh
%)}\]
%for $Z=V^{\ten 2}$, the fact that $
%C(\bw) = \bw^v$ if $\bw$ is considered as a constant cosimplicial complex, and the fact that $T(\bw^v \ten B) \cong \bw \ten TB$  when  $B$  is a second-quadrant bicomplex. Here $\bw^v$ is the bicomplex which is zero outside of bidegrees $(0,*)$ and $\bw^v_{0,q} = \bw_q$.
\end{proof}

\begin{rem}
Let $Z$ be a cosimplicial simplicial $\K \pi$-module. It is unclear how to make $\Tot \zh$ into a $\K B\pi$-comodule, although we can easily make $H(\Tot \zh)$ into an $H(B\pi)$-comodule by declaring that  $H\phi_\infty$ is an isomorphism of homology comodules. The coaction is given by the diagram
\[ \xymatrix{
N\Tot \zh \ar@{->}[r]^-{N\Tot \coact_1} \ar@{->}[d]_{\phi_\infty}^\simeq& N\Tot(\K B\pi \ten \zh) \ar@{->}[d]_{\phi_\infty}^\simeq \\
TCN \zh \ar@{->}[r]^-{TCN \coact_1} & TCN(\K B\pi \ten \zh) \ar@{->}[d] \\
& \bw \ten TCN\zh \\
& \bw \ten N\Tot \zh \ar@{->}[u]_{1\ten \phi_\infty}^\simeq
}\]
\end{rem}

The key result in the proof of Proposition~\ref{P:interchange} is the following theorem, whose proof will be deferred until Section~\ref{S:prooftcomodules}.

\begin{thm}\label{T:comodules} Let $V$ be a cosimplicial simplicial $\K$-module. Then \[
\phi_\infty N(\intalg): N(\K E\pi \tp (\Tot(V))^{\ten 2}) \to  TCN  (\K E\pi \tp V^{\ten 2}) \] is a map of $\bw$-comodules.
\end{thm}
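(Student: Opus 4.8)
The plan is to verify the $\bw$-comodule map condition by chasing a large diagram built from the three coactions $\coact_1$, $\coact_2=AW\circ N\coact_1$, and $\coact_3=\nabla\coact_2$ (suitably totalized), together with the interchange map $\intalg$ and the quasi-isomorphism $\phi_\infty$. Concretely, I must show that the outer rectangle
\[ \xymatrix@C+10pt{
N(\K E\pi\tp(\Tot V)^{\ten 2}) \ar@{->}[r]^-{\phi_\infty N\intalg} \ar@{->}[d]_{\coact_2} & TCN(\K E\pi\tp V^{\ten 2}) \ar@{->}[d]^{\coact_3} \\
\bw\ten N(\K E\pi\tp(\Tot V)^{\ten 2}) \ar@{->}[r]^-{1\ten\phi_\infty N\intalg} & \bw\ten TCN(\K E\pi\tp V^{\ten 2})
}\]
commutes, where the left vertical map is the $\bw$-coaction from Proposition~\ref{P:nzh} applied with $Z=(\Tot V)^{\ten 2}$ and the right vertical map is $\coact_3$ from Proposition~\ref{P:nnzh} applied with $Z=V^{\ten 2}$. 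The strategy is to split this into a ``geometric'' layer (the underlying simplicial/cosimplicial map $\intalg$ interacts correctly with the diagonal-induced coactions $\coact_1$) and an ``algebraic'' layer (the normalization machinery $N$, $AW$, $\nabla$, $\phi_\infty$ assembles these into the actual comodule structures), and then to glue.

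The first and main step is the geometric one: I claim that $\intalg\colon \K E\pi\tp(\Tot V)^{\ten 2}\to\Tot(\K E\pi\tp V^{\ten 2})$ intertwines the $\coact_1$-coactions, i.e. the square
\[ \xymatrix@C+10pt{
\K E\pi\tp(\Tot V)^{\ten 2} \ar@{->}[r]^-{\intalg} \ar@{->}[d]_{\coact_1} & \Tot(\K E\pi\tp V^{\ten 2}) \ar@{->}[d]^{\Tot\coact_1} \\
\K B\pi\ten\big(\K E\pi\tp(\Tot V)^{\ten 2}\big) \ar@{->}[r] & \Tot\big(\K B\pi\ten(\K E\pi\tp V^{\ten 2})\big)
}\]
commutes, where the bottom map combines $\intalg$ with the diagonal interchange used to define $\coact_1$. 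This should be a direct computation on $q$-simplices, in exactly the style of the proof of Proposition~\ref{P:tensortocommtensor}: both composites send an elementary tensor $e\ten f\ten g$ (with $e\in E\pi_q$, $f,g\in(\Tot V)_q$) to the map $(\faked_1,\faked_2)\mapsto \bar{e}(\faked_2)\ten\big(e(\faked_2)\ten f(\faked_1,\faked_2)\ten g(\faked_1,\faked_2)\big)$, using that the diagonal on $E\pi$ is compatible with evaluation and with the iterated diagonal on $\sk_\mess\Delta^\bullet\times\Delta^q$. The key point here is bookkeeping: $\coact_1$ is defined via the diagonal on $E\pi$, and $\intalg$ is defined via the diagonal on the simplices, so one must check these two diagonals do not interfere — which they don't, because they act on different tensor factors. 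I expect this to be the main obstacle, not because it is deep but because the indexing of the cosimplicial/simplicial variables and the bar-construction formula for $\bar{e}$ must be handled with care.

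The second step is to normalize and apply $\phi_\infty$. Applying $N$ to the geometric square and post-composing with $AW$ gives that $N\intalg$ intertwines $\coact_2$ on the left with $N\Tot\coact_2$ on the right (using associativity/naturality of $AW$, exactly as in Proposition~\ref{P:nzh}). Then I invoke that $\phi_\infty$ is a natural transformation $N\Tot\Rightarrow TCN$: naturality applied to the cosimplicial map $\coact_1$ (and its iterate) shows $\phi_\infty$ carries $N\Tot\coact_1$ to $TCN\coact_1$, and hence, after the $AW$/$\nabla$ manipulations that define $\coact_3$ in Proposition~\ref{P:nnzh}, carries the $\coact_2$-on-totalization side to $\coact_3$ on $TCN(\K E\pi\tp V^{\ten 2})$. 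Here I also need the compatibility of $\phi_\infty$ with the identification $T(\bw^v\ten B)\cong\bw\ten TB$ and with the shuffle map — this is essentially the content of Theorem~\ref{T:tensorproducts} (the $\mess=\infty$ case), applied with the constant cosimplicial complex $\bw$ in one slot, so I can cite it rather than redo it. Stacking the normalized geometric square on top of the $\phi_\infty$-naturality square, and collapsing via the definitions \eqref{E:act2} and \eqref{E:act3}, yields the desired commuting rectangle, which is exactly the assertion that $\phi_\infty N(\intalg)$ is a map of $\bw$-comodules. Finally, I should double-check naturality of the whole construction in $V$, which is automatic since every map in sight ($\intalg$, $\phi_\infty$, $AW$, $\nabla$, the coactions) is natural.
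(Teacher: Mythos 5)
Your proposal takes a genuinely different route from the paper's. The paper proves Theorem~\ref{T:comodules} (Section~\ref{S:prooftcomodules}) by a direct computation: it evaluates both composites $\coact_3\phi_\infty N(\intalg)$ and $(1\ten\phi_\infty N\intalg)\coact_2$ on an elementary tensor $e\ten f\ten g$, reducing each to an explicit sum over shuffles, and then matches the two sums term by term via a bijection $\booeta$ between certain $(k,p)$-shuffles and $(k,p-r)$-shuffles (supplemented by the observation that the shuffles with $\mu(0)<r$ give degenerate elements of $\bw$). Your proposal instead decomposes the claim into a ``geometric'' square, $\phi_\infty$-naturality, and an appeal to Theorem~\ref{T:tensorproducts}. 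That is a more conceptual and modular decomposition, and the geometric square does commute by the direct check you indicate. But there are real gaps in the way you propose to close the loop.

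First, Theorem~\ref{T:tensorproducts} is stated as a commutativity in \emph{homology}, whereas Theorem~\ref{T:comodules} is a chain-level statement (the coactions $\coact_2$ and $\coact_3$ are on the chain complexes $N(\K E\pi\tp(\Tot V)^{\ten 2})$ and $TCN(\K E\pi\tp V^{\ten 2})$, not on their homologies). To make your step 2 go through at the chain level you would have to cite not the statement of Theorem~\ref{T:tensorproducts} but the chain-level commuting diagram established in its proof (Section~\ref{S:Ptensor}); as written you invoke the homology statement and so at best prove the weaker assertion that $H(\phi_\infty N\intalg)$ is a map of $H(B\pi)$-comodules.

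Second, and more substantively, there is a directionality mismatch you do not address. Theorem~\ref{T:tensorproducts} compares $\phi_\infty$ through maps that go \emph{into} the tensor or totalized tensor: $N(\intb)$, the cosimplicial $AW$ from $CW\ten CW'$ to $C(W\ten W')$. But the coaction $\coact_3$ is built (Proposition~\ref{P:nnzh}) from the cosimplicial shuffle $\nabla$ going the \emph{opposite} direction, $C(\bw\ten N\zh)\to C(\bw)\ten CN\zh$, followed by the isomorphism $T(\bw^v\ten B)\cong\bw\ten TB$. To reconcile these you must observe that because $\K B\pi$ sits in one slot as a \emph{constant} cosimplicial object, the cosimplicial $AW$ and $\nabla$ between $\bw^v\ten CNZ$ and $C(\bw\ten NZ)$ are mutually inverse isomorphisms (in fact both are the identity modulo the canonical identification $C(c\K B\pi\ten NZ)\cong\bw^v\ten CNZ$), which then lets you invert the relevant arrows in the chain-level diagram from Theorem~\ref{T:tensorproducts} and recognize $\coact_3$. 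This observation is the crux of making your step~2 work and is not made in your proposal. Finally, the assembly of the overall diagram (geometric square stacked on the $\phi_\infty$-naturality square, then the $AW$/$\nabla$ bottom panel) needs to be written out; your phrase ``applying $N$ to the geometric square and post-composing with $AW$'' is loose, since the simplicial $AW$ on the right column can only be applied after $\phi_\infty$ and conormalization. The plan is sound in spirit and would likely yield a cleaner proof once these gaps are filled, but as written the central ``algebraic layer'' step is not justified.
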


%%%%%%%%%%%%%%%%%%%%%%%%%%%%%%%%%%%%%%%%%%%%%%%%%%
%%%%%%%%%%%%%%%%%%%%%%%%%%%%%%%%%%%%%%%%%%%%%%%%%%
\section{Completion of the Proof of Theorem~\ref{T:convergence}}\label{S:secondconv}
%%%%%%%%%%%%%%%%%%%%%%%%%%%%%%%%%%%%%%%%%%%%%%%%%%
%%%%%%%%%%%%%%%%%%%%%%%%%%%%%%%%%%%%%%%%%%%%%%%%%%
We now have all of the building blocks in place to prove our main theorem:

\theoremstyle{plain}
\newtheorem*{mainThm}{Theorem \ref{T:convergence}}

\begin{mainThm} Suppose that $X$ is a cosimplicial space. Then
\[ Q^m [ F^{-s} H_{t-s} (\Tot (X))] \ci F^{-v}H_{t-s+m} (\Tot(E\pi \times_\pi X^{\times 2})) \]
where \[ v(t,s,m) = \begin{cases} s & m\geq t \\ t+s-m & t-s \leq m \leq t. \end{cases} \]Furthermore, for each $m$  the following diagram commutes:
\[ \xymatrix{
\left({F^{-s}}/{F^{-s-1}}\right) H_{t-s} (\Tot X) \ar@{->}[r]^-\abut \ar@{->}[dd]^{Q^m} & E_{-s,t}^\infty (S_*(X)) \ar@{->}[d]^{\ext^m} \\
& E_{-v, v-s+m+t}^\infty (W\tp S_*(X)^{\ten 2}) \ar@{->}[d]^{\cong}_{E^1 - iso} \\
\left( {F^{-v}}/{F^{-v-1}}\right) H_{t-s+m} (\Tot (E\pi \times_\pi X^{\times 2})) \ar@{->}[r]^-\abut & E_{-v, v-s+m+t}^\infty (S_*(E\pi \times_\pi X^{\times 2})).
}\]
%where the horizontal maps are those given by Bousfield.
\end{mainThm}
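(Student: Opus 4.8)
The strategy is to reduce the general statement to the two cases we have already isolated: the bottom operation $m=t-s$, handled by Theorem~\ref{T:bottomop}, and the higher operations $m>t-s$, which we attack via the Bousfield--Kan universal examples. First I would observe that by Lemma~\ref{L:geomalgint} and Proposition~\ref{T:geomtoalg}, the geometric operation $Q^m$ on $H_*(\Tot X)$ is carried by $\algr$ to the algebraic operation $Q^m_{alg}=H(\intalg\nabla q^m)$ on $H_*(\Tot\K X)$, and the horizontal maps induced by $\algr$ inject on each filtration quotient $F^{-s}/F^{-s-1}$. Thus it suffices to prove the analogous statement for a cosimplicial simplicial $\K$-module $V$, with $Q^m_{alg}$ in place of $Q^m$ and with the chain of maps $N(\intalg)$ followed by $\phi_\infty$ identifying $H(\Tot(\K E\pi\tp V^{\ten 2}))$ with $HTCN(\K E\pi\tp V^{\ten 2})$. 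Proposition~\ref{P:tensortocommtensor} already matches the algebraic operation on the $\Tot$ side with the spectral-sequence-level operation $Q^n$ on $HT_\mess CN V$ for the bottom case; for the higher operations the point is that the operation $\ext^m$ in the spectral sequence is \emph{defined} by naturality from the universal example $\K\bkspace_{(\infty,s,t)}$.

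Next, given an infinite cycle $v$ with $[v]\in E^\infty_{-s,t}(V)$, pick the representing map $R_v\colon\K\bkspace_{(\infty,s,t)}\to V$ with $E^\infty(R_v)(\imath)=[v]$. By naturality of $\algr$, $\intalg$, $\phi_\infty$, and of the external chain operations, the whole comparison diagram for $V$ is obtained from the corresponding diagram for $\K\bkspace_{(\infty,s,t)}$ by applying the functor and the maps induced by $R_v$ and $1\ten R_v^{\ten 2}$. So it is enough to verify the theorem when $V=\K\bkspace_{(\infty,s,t)}$. In that case I would invoke Proposition~\ref{P:filtrquotients}, which gives the complete description of the filtration quotients of $H_{t-s}(\Tot\K\bkspace_{(\infty,s,t)})$ and of $H_{t-s+m}(\Tot(\K E\pi\tp(\K\bkspace_{(\infty,s,t)})^{\ten 2}))$, together with Proposition~\ref{P:interchange}, which says the algebraic interchange map $H(\intalg)$ is an isomorphism here. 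Comparing dimensions of the filtration quotients against the $E^\infty$ page shown in figure~\ref{F:e2bkinf}, and using that every spectral sequence operation $\ext^m$ applied to $\imath$ is nonzero (section~\ref{S:sso}), we see that $Q^m_{alg}[\imath]$ must land in the stated filtration $F^{-v}$ and cannot land in $F^{-v-1}$, because the only nonzero class in the relevant total degree living in that filtration is the one hitting $\ext^m(\imath)$. This pins down both the filtration statement and the commutativity of the square, for the universal example.

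Finally I would assemble the general case: the bottom operation $m=t-s$ is Theorem~\ref{T:bottomop} verbatim, so assume $t-s<m\le t$ (the range $m\ge t$ and $m<t$ together with $v$ as in \eqref{E:v} are handled uniformly once the value of $v$ is read off from figure~\ref{F:e2bkinf}). For a general cosimplicial space $X$ and a class $\bar c\in(F^{-s}/F^{-s-1})H_{t-s}(\Tot X)$, lift along $\algr$ and along $\phi_\infty N(\intalg)$ to reduce to $V=\K X$ with the algebraic operation, represent the abutment class $\abut(\bar c)=[v]\in E^\infty_{-s,t}$ by a map $R_v$ from the universal example, and transport the universal-case conclusion through $R_v$; naturality of the abutment map $\abut$ (it is induced by $\phi_\infty$, which is natural) ensures the right-hand vertical composite in the big diagram commutes, and the $E^1$-isomorphism $E^\infty_{-v,\bullet}(W\tp S_*(X)^{\ten 2})\cong E^\infty_{-v,\bullet}(S_*(E\pi\times_\pi X^{\times 2}))$ is exactly the identification coming from $S_*(\msc(2))\simeq_\pi W$ in the external setting, i.e.\ from $\nabla$ being a $\pi$-equivariant quasi-isomorphism. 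The main obstacle is the universal-example computation: we do not have formulas for $\ext^m$, so the argument that $Q^m_{alg}[\imath]$ is \emph{nonzero} in the expected filtration quotient rests entirely on the combination of Proposition~\ref{P:interchange} (the interchange map is an iso on this small example), Proposition~\ref{P:filtrquotients} (the filtration quotients are one-dimensional in the right spots), and the nontriviality of spectral sequence operations on $\imath$; marshaling these three facts so that ``nonzero target, one-dimensional filtration quotient, nonzero operation'' forces the conclusion is the delicate step. Everything is finite-dimensional, which is what makes this counting argument go through.
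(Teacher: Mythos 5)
Your proposal is correct and follows essentially the same route as the paper's own proof: reduce via $\algr$ and Proposition~\ref{T:geomtoalg} to the algebraic statement for a cosimplicial simplicial module $V$, reduce by representability (section~\ref{S:universality}) to the universal example $U=\K\bkspace_{(\infty,s,t)}$, and then use the one-dimensionality of the relevant filtration quotients (Proposition~\ref{P:filtrquotients}), the interchange isomorphism (Proposition~\ref{P:interchange}), and the nontriviality of $\ext^m(\imath)$ to force commutativity of the square, with naturality giving the general case. The only stylistic difference is that you carve out the bottom operation $m=t-s$ as a separate appeal to Theorem~\ref{T:bottomop}, whereas the paper's final assembly treats all $m\ge t-s$ uniformly (Theorem~\ref{T:bottomop}, via Proposition~\ref{P:tensortocommtensor}, enters instead as an ingredient of Proposition~\ref{P:interchange}); both packagings are valid and rest on the same lemmas.
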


By Theorem~\ref{T:geomtoalg} we need only to prove the algebraic version. Namely, for a cosimplicial simplicial module $V$, we must show that the external operations  $Q^m$ on $H(\Tot V)$ land in the correct filtration degree and that the diagram
\[ \xymatrix{
(F^{-s} / F^{-s-1}) H_{t-s} (\Tot V) \ar@{->}[r] \ar@{->}[dd]^{Q^m} & E_{-s,t}^\infty (NV) \ar@{->}[d]^{\ext^m} \\
& E_{-v, v-s+m+t}^\infty (W\tp NV^{\ten 2}) \ar@{->}[d]^{\cong}_{E^1 - iso} \\
(F^{-v} / F^{-v-1}) H_{t-s+m} (\Tot (\K E\pi \tp V^{\ten 2})) \ar@{->}[r] & E_{-v, v-s+m+t}^\infty (N(\K E\pi \tp V^{\ten 2}))
}\]
commutes. But notice that each element of
\[ (F^{-s}/F^{-s-1}) H_{t-s} \Tot V \hookrightarrow E_{-s,t}^\infty (V) \]
is represented by a map $\K \bkspace_{(\infty, s, t)} \to V$, so by universality (see section~\ref{S:universality}) we may prove convergence for the special case of $\K \bkspace_{(\infty, s, t)}$. 

For the rest of the section we restrict to this special case and abbreviate \[ U=\K \bkspace_{(\infty, s, t)}.\] 

\begin{lem}\label{L:totV} Let $U=\K \bkspace_{(\infty, s, t)}$.
Then
\[ H_k (\Tot U) = \begin{cases} \K & k=t-s \\  0 & k\neq t-s. \end{cases}\]
Furthermore, the filtration on $H(\Tot U)$ is given by \[ 
\K = F^{-s} = F^{-s+1} = \cdots = H(\Tot U) \]  
and $F^{-s-1} = 0$.
\end{lem}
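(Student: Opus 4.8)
The plan is to compute $H_*(\Tot U)$ directly from the description of the spectral sequence of $U = \K \bkspace_{(\infty,s,t)}$ and then read off the filtration. Recall from section~\ref{S:universality} that the $E^1$ page of the spectral sequence associated to $\bkspace_{(\infty,s,t)}$ consists of a single nonzero class $\imath$ in bidegree $(-s,t)$, with everything else zero. Consequently all differentials vanish for degree reasons, so $E^1 = E^\infty$, and $E^\infty_{-p,q}(NU) = \K$ if $(p,q) = (s,t)$ and is zero otherwise.

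First I would invoke the convergence of the Bousfield spectral sequence for $\Tot U$. Since $U = \K\bkspace_{(\infty,s,t)}$ is bounded in the relevant sense (the conormalization $CNU$ is concentrated in a single column, as indicated in figure~\ref{F:inftyconormed}), the spectral sequence converges strongly to $H_*(T CN U) \cong H_*(\Tot U)$ via the quasi-isomorphism $\phi_\infty$ of \cite[Lemma 2.2]{bousfield}. Strong convergence gives $\gr^{-p} H_{q-p}(\Tot U) = E^\infty_{-p,q}(NU)$ for all $p,q$; since the $E^\infty$ page has exactly one nonzero entry, namely a copy of $\K$ in position $(-s,t)$, we conclude $H_k(\Tot U) = \K$ for $k = t-s$ and $H_k(\Tot U) = 0$ otherwise.

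For the filtration statement, recall from section~\ref{S:totalization} that $F^{-p} H(\Tot U) = \ker(H\Tot U \to H T_{p-1} CN U)$, so the $F^{-p}$ form a decreasing filtration with $F^{-p-1} \subseteq F^{-p}$, and that the associated graded in filtration degree $-p$ is $E^\infty_{-p,*}(NU)$. Since the only nonzero $E^\infty$ term lies in filtration degree $-s$, the filtration on the one-dimensional space $H_{t-s}(\Tot U)$ jumps exactly once: it is all of $H(\Tot U)$ in filtration degrees $\geq -s$ (that is, $F^{-s} = F^{-s+1} = \cdots = H(\Tot U)$) and is zero in filtration degrees $\leq -s-1$ (that is, $F^{-s-1} = 0$). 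The fact that $CNU$ vanishes outside a bounded range of columns guarantees $F^{-p} = 0$ for $p$ large and $F^{-p} = H(\Tot U)$ for $p$ small, so there are no hidden extension or lim$^1$ issues.

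The main obstacle is justifying strong convergence of the spectral sequence for this particular $U$ — i.e., that there are no $\lim^1$ obstructions and that the filtration is exhaustive and Hausdorff. This is where the explicit boundedness of $CN\bkspace_{(\infty,s,t)}$ (its concentration in the column containing the single $E^1$ class, cf.\ figure~\ref{F:inftyconormed}) does the work: the tower $\{T_\ell CN U\}$ stabilizes in each homological degree, so $H(TCN U) = \lim_\ell H(T_\ell CN U)$ with vanishing $\lim^1$, and the filtration quotients are exactly the $E^\infty$ terms. Everything else is a direct read-off.
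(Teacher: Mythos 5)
Your high-level strategy—deduce $H_*(\Tot U)$ from the $E^\infty$ page via strong convergence—is plausible, but the step on which it all hinges is based on a factually incorrect reading of figure~\ref{F:inftyconormed}. You claim that $CNU = C(D_{\infty st})$ is ``concentrated in a single column'' and that it ``vanishes outside a bounded range of columns,'' but the figure shows the opposite: the bicomplex is an infinite staircase, with a copy of $\K$ in every column $-p$ for $p\geq s$ (the entries at positions $(-s-i, t+i)$ and $(-s-1-i, t+i)$ for $i\geq 0$). What is concentrated in a single position is the $E^1$ page (the homology of the columns), not $CNU$ itself. Consequently the tower $\{T_\ell CNU\}$ does \emph{not} stabilize in any fixed total degree—in degree $t-s$ there are infinitely many nonzero terms—so your vanishing-$\lim^1$ and Hausdorff-filtration claims are not justified as written. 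Strong convergence is a genuine issue for product total complexes of second-quadrant bicomplexes; note that for the related bicomplex in Lemma~\ref{L:Hofhomotopyorbits} the paper has to invoke Boardman's conditional-convergence theorems explicitly, and no such argument appears in your proof.

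The paper's actual proof sidesteps the convergence question entirely: it cites the direct homology computation of $H(TC D_{\infty st})$ from an earlier paper, and then identifies the filtration by explicitly exhibiting the one cycle-that-is-not-a-boundary in the staircase bicomplex (the ``product of all generators in total degree $t-s$'') and observing that it lies in $F^{-s}$ but not $F^{-s-1}$. If you want to salvage your route, you would need to replace the false boundedness claim with an honest convergence argument (e.g. a Boardman-style conditional convergence argument combined with the degenerate $E^\infty$ page), or else compute $H(TCNU)$ and the location of its generating cycle directly from the staircase, as the paper does.
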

\begin{proof}
Note that, using the notation of \cite[Section 2.2]{me1}, $NU = D_{\infty s t}$. 
By \cite[Lemma 2.2]{bousfield}, we have $H(\Tot U) = H(TCNU) = H(TC D_{\infty st})$, so the first statement is a consequence of \cite[Proposition 2.4]{me1}. In the proof of that proposition we explicitly wrote down the bicomplex $C(D_{\infty st})$, which we reproduce as figure~\ref{F:inftyconormed}. 
\begin{figure}[ht]
\centering
\begin{tikzpicture}
    %Axes
    \draw[->] (0.2,0) -- (-4,0);
    \draw[->] (0,-0.2) -- (0,4);
    %Horizontal ticks
    \draw (-.5,2pt) -- (-.5,-2pt) node[below] {$-s$};
    \draw (-1,2pt) -- (-1,-2pt); % node[below] {$-(s+r)$};
%    \draw (-2.5,2pt) -- (-2.5,-2pt); 
%    \draw (-3,2pt) -- (-3,-2pt); %node[below] {$-v$}; 
    %Vertical Ticks
    \draw (2pt,.5) -- (-2pt,.5) node[right] {$t$};
    \draw (2pt,1) -- (-2pt,1) node[right] {$t+1$};
%    \draw (2pt,2.5) -- (-2pt,2.5) node[right] {$t+r-2$};
%    \draw (2pt,3) -- (-2pt,3) node[right] {$t+r-1$};
    %Points on x=-y
    \fill (-.5,.5) circle (2pt);
    \fill (-1,1) circle (2pt);
    \fill (-2.5,2.5) circle (2pt);
    \fill (-3,3) circle (2pt);
    %Points on x=-y-1
    \fill (-1,.5) circle (2pt);
    \fill (-1.5, 1) circle (2pt);
    \fill (-2.5,2) circle (2pt);
    \fill (-3,2.5) circle (2pt);
    \fill (-3.5,3) circle (2pt);
    %Horizontal Arrows
    \draw [->] (-.6,.5) -- (-.9,.5);
    \draw [->] (-1.1,1) -- (-1.4,1);
    \draw [->] (-2.1,2) -- (-2.4,2);
    \draw [->] (-2.6,2.5) -- (-2.9,2.5);
    \draw [->] (-3.1,3) -- (-3.4,3);    
    %Vertical Arrows
    \draw [->] (-1,.9) -- (-1,.6);
    \draw [->] (-1.5,1.4) -- (-1.5,1.1);
    \draw [->] (-2.5,2.4) -- (-2.5,2.1);
    \draw [->] (-3,2.9) -- (-3,2.6);
    \draw [->] (-3.5,3.4) -- (-3.5,3.1);
    %Dotted Lines
    \draw [dotted] (-1.5,1.5)--(-2,2);
%    \draw [dotted] (-1.5,1)--(-2.5,2);
    \draw [dotted] (-1.8,1.3)--(-2.2,1.7);
        \draw [dotted] (-3.5,3.5)--(-4,4);
         \draw [dotted] (-3.8,3.3)--(-4.2,3.7);
\end{tikzpicture}
\caption{The Bicomplex $C(D_{\infty st})$}\label{F:inftyconormed}
\end{figure}
The product of all generators in total degree $t-s$ is then the only cycle which is not also a boundary, and this product lies in $F^{-s}$ but not in $F^{-s-1}$.

\end{proof}

\begin{lem}\label{L:Hofhomotopyorbits}
Let $U=\K \bkspace_{(\infty, s, t)}$. Then \[ H(\Tot (\K E\pi \tp U^{\ten 2})) \cong HTCN(\K E\pi \tp U^{\ten 2}) \cong H(\K E\pi \tp (\Tot U)^{\ten 2}).\] In particular, the $k^{\text{th}}$ graded piece of each is $\K$ for $k\geq 2(t-s)$ and $0$ for $k<2(t-s)$. Furthermore, for $m\geq t-s$, the filtration is given by
\begin{gather*} \K = F^{-v} H_{t-s+m}(\Tot (\K E\pi \tp U^{\ten 2})) = \dots = H_{t-s+m}(\Tot (\K E\pi \tp U^{\ten 2})) \\
0 = F^{-v-1} H_{t-s+m}(\Tot (\K E\pi \tp U^{\ten 2})) \end{gather*} 
where \[ v = \begin{cases} s & m\geq t \\ t+s-m & t-s \leq m \leq t. \end{cases} \]
\end{lem}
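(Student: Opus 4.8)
The plan is to compute all three graded modules directly and then identify the filtrations, rather than to prove the two isomorphisms abstractly. The leftmost isomorphism $H(\Tot(\K E\pi \tp U^{\ten 2})) \cong HTCN(\K E\pi \tp U^{\ten 2})$ is immediate from \cite[Lemma 2.2]{bousfield}, since $\phi_\infty$ is a quasi-isomorphism; this also transports the column filtration on $TCN(\K E\pi \tp U^{\ten 2})$ to the filtration on $H(\Tot(\K E\pi \tp U^{\ten 2}))$. So the real content is (i) computing $HTCN(\K E\pi \tp U^{\ten 2})$, (ii) computing $H(\K E\pi \tp (\Tot U)^{\ten 2})$, (iii) matching them, and (iv) reading off the filtration degrees.

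First I would handle (ii), which is the easy side. By Lemma~\ref{L:totV} we know $\Tot U$ has homology $\K$ concentrated in degree $t-s$, so $(\Tot U)^{\ten 2}$ has homology $\K$ in degree $2(t-s)$ by K\"unneth. Then $\K E\pi \tp (\Tot U)^{\ten 2}$ computes the homotopy orbits $H_*^{\pi}(\Tot U \ten \Tot U)$, and since $\pi$ acts on the homology $\K$ (in degree $2(t-s)$) trivially --- the swap on $x\ten x$ fixes it --- we get $H_k(\K E\pi \tp (\Tot U)^{\ten 2}) = H_{k-2(t-s)}(B\pi;\K) = H_{k-2(t-s)}(\K B\pi)$, which is $\K$ for $k\geq 2(t-s)$ and $0$ below. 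Concretely the nonzero class in degree $2(t-s)+j$ is represented by $e_j \ten x \ten x$ where $x$ generates $H_{t-s}(\Tot U)$.

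For (i), I would use the spectral sequence of the bicomplex $CN(\K E\pi \tp U^{\ten 2})$, or more efficiently observe that $N(\K E\pi \tp U^{\ten 2}) \simeq W \tp (NU)^{\ten 2}$ via the shuffle map (as in section~\ref{S:arakikudo}), so this is exactly the $E^1$-input for the spectral-sequence operations on $\imath \in E^\infty_{-s,t}(U)$ computed in \cite[Theorem 7.1]{me1}, i.e. figure~\ref{F:e2bkinf}. That figure shows $E^2 = E^\infty$ is a single $\K$ at each lattice point of the depicted solid lines: the horizontal segment in row $2t$ running from column $-2s$ to column $-s$, and the vertical ray from $(-s, 2t)$ upward. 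Translating bidegree $(-p,q)$ to total degree $q - p$ and column filtration $-p$: for a class in the vertical ray at $(-s, 2t+j)$, $j\geq 0$, the total degree is $2t+j-s = 2(t-s) + (s+j)$ and filtration is $-s$; for a class in the horizontal segment at $(-s-i, 2t)$, $0\leq i\leq s$, total degree is $2t-s-i = 2(t-s) + (s-i)$ and filtration is $-s-i$. In both families the total degree sweeps out exactly $\{2(t-s), 2(t-s)+1, \dots\}$ with exactly one $\K$ in each degree (the two families only overlap at the corner $(-s,2t)$, and elsewhere they partition the degrees), so $HTCN(\K E\pi \tp U^{\ten 2})$ is $\K$ in each degree $\geq 2(t-s)$ and $0$ below, proving the second assertion and, together with (ii), all three isomorphisms by dimension count.

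Finally, for (iv): fix $k = t-s+m$ with $m\geq t-s$, i.e. $k\geq 2(t-s)$, and write $k = 2(t-s) + \delta$ with $\delta = m - (t-s) \geq 0$. The unique nonzero class of $HTCN(\K E\pi \tp U^{\ten 2})$ in degree $k$ sits on the vertical ray when $\delta \geq s$ (i.e. $m\geq t$), giving filtration $-s$, and on the horizontal segment when $0\leq \delta \leq s$ (i.e. $t-s\leq m\leq t$), with $\delta = s - i$, giving filtration $-s-i = -(s + (s-\delta)) = -(2s - m + (t-s)) = -(t+s-m)$. In both cases this is exactly $-v$ with $v$ as in the statement, and since there is a single $\K$ in degree $k$, it is nonzero in filtration $F^{-v}$ and zero in $F^{-v-1}$, which is the claimed filtration; the filtration is exhaustive so $F^{-v} = \cdots = H_k$. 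I expect step (i) --- carefully reading off the total degrees and filtration levels from figure~\ref{F:e2bkinf} and checking the two families of classes partition the degrees $\geq 2(t-s)$ --- to be the main bookkeeping obstacle, but it is entirely a matter of translating the already-computed $E^\infty$ picture, not a new computation.
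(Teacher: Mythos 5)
Your overall route is the same as the paper's: compute $H(\K E\pi \tp (\Tot U)^{\ten 2})$ from Lemma~\ref{L:totV} and the K\"unneth theorem, compute the spectral sequence of $CN(\K E\pi\tp U^{\ten 2})$ by passing to $W\tp D_{\infty st}^{\ten 2}$ via the shuffle map and invoking \cite[Theorem 7.1]{me1} (figure~\ref{F:e2bkinf}), match the two by dimension count, and read off the filtration from the lattice of nonzero $E^\infty$ terms. Your bookkeeping in step (iv) is correct: the vertical ray at $(-s,2t+j)$ gives filtration $-s$ when $\delta=m-(t-s)\geq s$, the horizontal segment at $(-s-i,2t)$ gives $-(t+s-m)$ when $0\leq\delta\leq s$, and these are exactly $-v$.

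The gap is the leap from $E^\infty$ to $HTCN(\K E\pi\tp U^{\ten 2})$ and its filtration. You write ``That figure shows $E^2=E^\infty$ \dots so $HTCN(\K E\pi\tp U^{\ten 2})$ is $\K$ in each degree $\geq 2(t-s)$'' and later assume that a single $\K$ at lattice point $(-p,q)$ forces $F^{-p}=H_{q-p}$ and $F^{-p-1}=0$. Both inferences need a convergence statement, and it is not automatic: $CN(\K E\pi\tp U^{\ten 2})$ is a second-quadrant bicomplex and we are taking the \emph{product} total complex, so the column filtration need not be exhaustive and conditional/strong convergence is not free. The paper handles this explicitly: it observes that the shuffle map $CN(\K E\pi\tp U^{\ten 2})\to C(W\tp D_{\infty st}^{\ten 2})$ is an $E^1$-isomorphism, recalls that the target spectral sequence converges strongly (\cite[Theorem 8.3]{me1}, via Boardman's \cite[Theorems 7.1 and 10.1]{boardman}), and then transports strong convergence back. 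You should insert exactly this step; without it, the claim about the filtration --- which is the part of the lemma actually used downstream in Propositions~\ref{P:interchange} and~\ref{P:filtrquotients} --- is unsupported.
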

\begin{proof} 
We first compute $H(\K E\pi \tp (\Tot U)^{\ten 2})$.
Notice that the shuffle map gives an isomorphism
\[ HN(\K E\pi \tp (\Tot U)^{\ten 2}) \cong H( (N\K E\pi) \tp (N\Tot U)^{\ten 2}) = H(W\tp (N\Tot U)^{\ten 2}).\] We computed the homology of $\Tot U$ in Lemma~\ref{L:totV}. We thus have
\[ H_k(W\tp (N\Tot U)^{\ten 2}) = H_k(W\tp (HN\Tot U)^{\ten 2}) = \begin{cases} \K & k\geq 2(t-s) \\ 0 & k< 2(t-s)\end{cases} \]
by \cite[Lemmas 1.1 and 1.3]{may} using the same arguments from \cite[Section 3.1]{me1}.

The first isomorphism in the statement of the lemma comes from the quasi-isomorphism $\phi_\infty$  from \eqref{E:phiell}. The shuffle map  \[ \nabla: CN(\K E\pi \tp U^{\ten 2}) \to C(W\tp (NU)^{\ten 2}) = C(W\tp D_{\infty st}^{\ten 2})\] is an isomorphism on $E^1$ of the associated spectral sequences, hence is an isomorphism at $E^\infty$. We saw in \cite[Theorem 8.3]{me1} that the spectral sequence for the latter bicomplex converged strongly, and the same argument (using \cite[Theorems 7.1 and 10.1]{boardman}) gives that the spectral sequence associated to $CN(\K E\pi \tp U^{\ten 2})$ converges strongly. We know what the spectral sequence for $W\tp D_{\infty st}^{\ten 2}$ looks like at $E^2$ by \cite[Theorem 7.1]{me1}, and all higher differentials $\ssd^2, \ssd^3, \dots$ are zero for structural reasons. Thus we have computed that 
\[ H_k TCN(\K E\pi \tp U^{\ten 2}) = H_k TC (W\tp D_{\infty st}^{\ten 2}) = \begin{cases} \K & k\geq 2(t-s) \\ 0 & k < 2(t-s). \end{cases} \]

To prove the statement about the filtration on $H(\Tot (\K E\pi \tp U^{\ten 2}))$, we just prove the corresponding statement for the filtration on $HTCN(\K E\pi \tp U^{\ten 2})$ since these are isomorphic as filtered modules via the quasi-isomorphism $\phi_\infty$. But we just saw that we have strong convergence for the spectral sequence associated to $CN(\K E\pi \tp U^{\ten 2})$,
so, using the same maps as in the previous paragraph,
\begin{align*} (F^p / F^{p-1}) H_{q+p} TCN(\K E\pi \tp U^{\ten 2}) & \cong E^\infty_{p,q}(W\tp D_{\infty st}^{\ten 2})\\  &= \begin{cases} \K & (p,q) \in \set{-s} \times [2t, \infty) \\
\K & (p,q) \in [-2s,-s-1] \times \set{2t} \\ 0 & \text{otherwise}\end{cases} \end{align*}
by \cite[Theorem 7.1]{me1}.
\end{proof}

Most of the work of the past few sections was done to prove
\begin{prop}\label{P:interchange}
Let $U=\K \bkspace_{(\infty, s, t)}$. Then 
\[ H(\intalg): H(\K E\pi \tp (\Tot U)^{\ten 2}) \to H(\Tot (\K E\pi \tp U^{\ten 2})) \] is an isomorphism.
\end{prop}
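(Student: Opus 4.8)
The plan is to deduce the proposition from Theorem~\ref{T:comodules} together with a small amount of comodule algebra, with the one genuinely non-formal input supplied by Theorem~\ref{T:bottomop}. Write $d=2(t-s)$. By Lemma~\ref{L:Hofhomotopyorbits}, both $H(\K E\pi\tp(\Tot U)^{\ten 2})$ and $H(\Tot(\K E\pi\tp U^{\ten 2}))$ are one-dimensional in each degree $\ge d$ and zero below $d$; in particular they are finite-dimensional in each degree, so it is enough to prove that $H(\intalg)$ is injective. Since $\phi_\infty$ is a quasi-isomorphism, $H\phi_\infty$ identifies $H(\Tot(\K E\pi\tp U^{\ten 2}))$ with $HTCN(\K E\pi\tp U^{\ten 2})$ as $\bw$-comodules (cf.\ the remark following Proposition~\ref{P:nnzh}); with this identification Theorem~\ref{T:comodules} says that $H(\intalg)$ is a homomorphism of $\bw$-comodules, which I will call $f\colon A\to B$.

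First I would exhibit $A$ as a cofree $\bw$-comodule. The shuffle map gives a $\pi$-equivariant quasi-isomorphism $W\tp(N\Tot U)^{\ten 2}\to N(\K E\pi\tp(\Tot U)^{\ten 2})$, and by Lemma~\ref{L:totV} we have $HN\Tot U=\K$ concentrated in degree $t-s$, so $H(W\tp(N\Tot U)^{\ten 2})\cong H(W\tp\K[d])$, where $\pi$ acts trivially on $\K[d]=(\K[t-s])^{\ten 2}$ over $\K=\Z/2$. Then $W\tp\K[d]$ has zero differential and equals $\bw[d]$, and using the formula $\psi_W(e_m)=\sum_{i+j=m}e_i\ten\sigma^i e_j$ recorded before Proposition~\ref{P:nzh} one checks that the resulting coaction is the standard (cofree) one. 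Thus $A\cong\bw\ten\K[d]$ as a $\bw$-comodule; writing $\bar e_i$ for the nonzero element of $A_{d+i}$, the coaction is $\coact_A(\bar e_i)=\sum_{j+k=i}\bar e_j\ten\bar e_k$.

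The formal step is then immediate: if $f(\bar e_i)=0$, applying $\coact_B$ and using that $f$ is a comodule map gives
\[ 0 \;=\; \coact_B(f(\bar e_i)) \;=\; (1\ten f)\,\coact_A(\bar e_i) \;=\; \sum_{j+k=i}\bar e_j\ten f(\bar e_k)\;\in\;\bw\ten B. \]
Since $\{\bar e_j\}$ is a $\K$-basis of $\bw$, we have $\bw\ten B=\bigoplus_j\bar e_j\ten B$, so every summand vanishes and $f(\bar e_k)=0$ for all $k\le i$; in particular $f(\bar e_0)=0$. Contrapositively, if $f$ is nonzero in degree $d$ then $f$ is injective, hence --- being an injection between graded vector spaces with equal finite dimensions in each degree --- an isomorphism.

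It remains to show that $f=H(\intalg)$ is nonzero in degree $d$, and this is the main obstacle. By the formula $Q^{t-s}_{alg}=H(\intalg\nabla q^{t-s})$ of Proposition~\ref{T:geomtoalg}, the algebraic bottom operation $Q^{t-s}_{alg}\colon H_{t-s}(\Tot U)\to H_d(\Tot(\K E\pi\tp U^{\ten 2}))$ factors as $H(\intalg)$ composed with the external squaring $H(\nabla q^{t-s})\colon H_{t-s}(\Tot U)\to H_d(\K E\pi\tp(\Tot U)^{\ten 2})$; the latter is an isomorphism $\K\to\K$, since at the chain level it sends a cycle $c$ representing the generator of $H_{t-s}(\Tot U)=\K$ to $[e_0\ten c\ten c]$, which is visibly nonzero. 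On the other hand, the proof of Theorem~\ref{T:bottomop} yields an algebraic version valid for any cosimplicial simplicial module $V$ (the only use of a cosimplicial space there is the closing appeal to Proposition~\ref{T:geomtoalg}, which merely transports the conclusion from modules to spaces and may be omitted); applied to $V=U$, and combined with Lemmas~\ref{L:totV} and~\ref{L:Hofhomotopyorbits}, it identifies $Q^{t-s}_{alg}$ with the composite $E^\infty_{-s,t}(NU)\xrightarrow{\ext^{t-s}}E^\infty_{-2s,2t}(W\tp(NU)^{\ten 2})\cong E^\infty_{-2s,2t}(N(\K E\pi\tp U^{\ten 2}))$ under the abutments, which here are isomorphisms since source and target are one-dimensional and $\abut$ is injective. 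Since $\ext^{t-s}(\imath)=\exte_{-2s,2t}\neq0$ by section~\ref{S:sso}, we conclude $Q^{t-s}_{alg}\neq0$, hence $H(\intalg)$ is nonzero in degree $d$, as required. The work left to be checked carefully is confined to this last paragraph --- that the bottom square really is an isomorphism in degree $d$ for this particular $U$, and that the relevant portion of the independently proved Theorem~\ref{T:bottomop} supplies the nonvanishing --- everything else being the bookkeeping of Lemma~\ref{L:Hofhomotopyorbits} and the comodule manipulation above.
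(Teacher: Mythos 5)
Your proof is correct and follows the same overall strategy as the paper's: identify $H(\K E\pi\tp(\Tot U)^{\ten 2})$ as a cofree $\bw$-comodule, invoke Theorem~\ref{T:comodules} to see that $H(\intalg)$ is a comodule map, and reduce to checking non-vanishing in the bottom degree $d=2(t-s)$. The one place you diverge is in that last step: the paper cites Proposition~\ref{P:tensortocommtensor} directly, which immediately gives $\phi_\infty\, H(\intalg)\,Q^{t-s}=Q^{t-s}\,\phi_\infty$ and hence that $H(\intalg)(e_0\ten v\ten v)$ is nonzero; you instead pass through the algebraic version of Theorem~\ref{T:bottomop} and the non-triviality of $\ext^{t-s}(\imath)$ on the $E^\infty$-page, which is a correct but longer route (and ultimately rests on the same Proposition~\ref{P:tensortocommtensor} via Theorem~\ref{T:bottomop}). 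Your observation that Theorem~\ref{T:bottomop} has a module-level version that avoids Proposition~\ref{T:geomtoalg} is accurate and important for avoiding circularity, as is your care to invoke Lemmas~\ref{L:totV} and~\ref{L:Hofhomotopyorbits} rather than Proposition~\ref{P:filtrquotients} (whose proof already uses the present proposition). The comodule-algebra step phrased via the graded decomposition $\bw\ten B=\bigoplus_j \bar e_j\ten B$ is a clean equivalent of the paper's projection-onto-degree-$d$ argument.
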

\begin{proof}
We showed in Lemma~\ref{L:Hofhomotopyorbits} that \[H(\Tot (\K E\pi \tp U^{\ten 2})) \cong HTCN(\K E\pi \tp U^{\ten 2})\] and that these are of finite type, but mentioned nothing about any chain maps. In this proof we will show that $H(\intalg)$ is an injection, which implies that it is an isomorphism.

We now use the fact that $\Tot U$ has the homology of a sphere (Lemma~\ref{L:totV}). Write $v$ for the generator of $H_{t-s}(\Tot U)$. 
We then have, as in the previous lemma, that \begin{align*} H_k (\K E\pi \tp (H\Tot U)^{\ten 2}) &= H_k(\K E\pi \tp (\Tot U)^{\ten 2} ) \\&= \begin{cases} \K = \K ( e_{k-2t+2s} \ten v \ten v) & k\geq 2(t-s) \\ 0 & k< 2(t-s). \end{cases} \end{align*} 
This implies that $H(\K E\pi \tp (\Tot U)^{\ten 2} )$ is a cofree $H(B\pi)$-comodule, with coaction given by
\[ \coact ( e_n \ten v \ten v) = \sum_{i+j=n} \bar e_i \ten (e_j \ten v \ten v) \] 
as in Proposition~\ref{P:nzh}.
The cofreeness, combined with the fact that the cogenerator $e_0\ten v \ten v$ maps to a nonzero element under $H(\intalg)$, will imply that the comodule map  $H(\intalg)$ is an injection. We give details in the next paragraph.

By Proposition~\ref{P:tensortocommtensor} and the fact that $\phi_\infty$ is a  quasi-isomorphism, the map
\[ H(\K E\pi \tp (H\Tot U)^{\ten 2}) = H(\K E\pi \tp (\Tot U)^{\ten 2} ) \to H(\Tot(\K E\pi \tp U^{\ten 2}))  \] 
takes $e_0 \ten v \ten v$ to a nonzero element $h$, so  is an isomorphism in degree $2(t-s)$.
%on $H_{2(t-s)}$. 
Theorem~\ref{T:comodules} gives commutativity of the top square of the following diagram:
\[ \xymatrix{
H(\K E\pi \tp (\Tot U)^{\ten 2}) \ar@{->}[r]^{H\intalg} \ar@{->}[d]^{\coact_1} & H(\Tot\K E\pi \tp U^{\ten 2}) \ar@{->}[d]^{\coact_3}\\ 
HB\pi \ten H(\K E\pi \tp (\Tot U)^{\ten 2}) \ar@{->}[r]^{1\ten H\intalg} \ar@{->}[d] & HB\pi \ten H(\Tot\K E\pi \tp U^{\ten 2}) \ar@{->}[d]\\ 
HB\pi \ten H_{2(t-s)}(\K E\pi \tp (\Tot U)^{\ten 2}) \ar@{->}[r]^{1\ten H\intalg}  & HB\pi \ten H_{2(t-s)}(\Tot\K E\pi \tp U^{\ten 2}). \\ 
}\]
The unlabeled vertical arrows are just projection onto the appropriate graded piece, so the bottom square commutes as well.
Remember that we wish to show that the top map is an injection. Since $H_{n+2(t-s)} ( \K E\pi \tp (\Tot U)^{\ten 2})$ is $1$-dimensional for $n\geq 0$, we must only check that $e_n \ten v \ten v$ does not map to zero. To do this we compute its image in $H(B\pi) \ten H_{2(t-s)}(\Tot\K E\pi \tp U^{\ten 2})$ by the bottom left composite. We have
\[ e_n \ten v \ten v \mapsto \sum_{i+j = n} \bar e_i \ten (e_j \ten v \ten v) \mapsto \bar e_n \ten  (e_0 \ten v \ten v) \mapsto \bar e_n \ten h \neq 0. \]
Thus the comodule map $H(\intalg)$ is an injection.
\end{proof}

\begin{prop}\label{P:filtrquotients}
Let $U = \K \bkspace_{(\infty, s, t)}$. Then, for $m\geq t-s$, 
\begin{align*}  H_{t-s} (\Tot U) &= (F^{-s} / F^{-s-1}) H_{t-s} (\Tot U) \qquad \text{and} \\
 \ H_{t-s+m} (\Tot (\K E\pi \tp U^{\ten 2})) &= (F^{-v} / F^{-v-1}) H_{t-s+m} (\Tot (\K E\pi \tp U^{\ten 2})) \end{align*} 
where \[ v = \begin{cases} s & m\geq t \\ t+s-m & t-s \leq m \leq t. \end{cases} \] Furthermore, the modules listed above are one-dimensional vector spaces
and $Q^m(F^{-s}(H\Tot U)) \ci F^{-v}H(\Tot (\K E\pi \tp U^{\ten 2}))$.
\end{prop}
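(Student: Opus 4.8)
The plan is to assemble this statement from Lemmas~\ref{L:totV} and~\ref{L:Hofhomotopyorbits}, which have already carried out all of the substantive work; there is no genuine obstacle here, only bookkeeping. I would state at the outset that the proposition is essentially formal given those two lemmas, and then verify each of its three assertions in turn.

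First I would dispose of the claims about $H_{t-s}(\Tot U)$. Lemma~\ref{L:totV} tells us that $H(\Tot U)$ is concentrated in degree $t-s$, where it is one-dimensional, and that $F^{-s}(H\Tot U) = H(\Tot U)$ while $F^{-s-1}(H\Tot U) = 0$. Hence $(F^{-s}/F^{-s-1}) H_{t-s}(\Tot U) = H_{t-s}(\Tot U) = \K$, which is the first displayed equality, and one-dimensionality is immediate.

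Next I would treat $H_{t-s+m}(\Tot(\K E\pi \tp U^{\ten 2}))$. Since $m \geq t-s$ we have $t-s+m \geq 2(t-s)$, so Lemma~\ref{L:Hofhomotopyorbits} applies: this group is one-dimensional, and in this range of degrees the filtration satisfies $F^{-v}H_{t-s+m}(\Tot(\K E\pi \tp U^{\ten 2})) = H_{t-s+m}(\Tot(\K E\pi \tp U^{\ten 2}))$ and $F^{-v-1}H_{t-s+m}(\Tot(\K E\pi \tp U^{\ten 2})) = 0$, with $v$ exactly as in the statement. Taking the quotient yields the second displayed equality, and again one-dimensionality.

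For the final inclusion, note that $F^{-s}(H\Tot U) = H(\Tot U)$ lives entirely in degree $t-s$ by Lemma~\ref{L:totV}, so $Q^m$ carries it into $H_{t-s+m}(\Tot(\K E\pi \tp U^{\ten 2}))$ (the external operation $q^m$ has degree $m$ and the remaining maps preserve degree). But we have just seen, using $m \geq t-s$, that all of $H_{t-s+m}(\Tot(\K E\pi \tp U^{\ten 2}))$ is contained in $F^{-v}H(\Tot(\K E\pi \tp U^{\ten 2}))$, so $Q^m(F^{-s}(H\Tot U)) \ci F^{-v}H(\Tot(\K E\pi \tp U^{\ten 2}))$ holds for purely dimensional reasons. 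The only point worth flagging as subtle is that this last step is legitimate precisely because the convergence argument inside the proof of Lemma~\ref{L:Hofhomotopyorbits} pins down exactly where the filtration jumps; once that input is granted, the present proposition is immediate.
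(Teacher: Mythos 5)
Your proof is correct and takes essentially the same route as the paper: both reduce everything to Lemmas~\ref{L:totV} and~\ref{L:Hofhomotopyorbits}, and both obtain the inclusion $Q^m(F^{-s}) \ci F^{-v}$ from the fact that $Q^m$ raises degree by $m$ while the entire degree $t-s+m$ piece sits inside $F^{-v}$. The only cosmetic difference is that the paper's proof also records, via the factorization through $H(\intalg)$ and Proposition~\ref{P:interchange}, that $Q^m$ is nontrivial on $H(\Tot U)$ — a fact used later in the proof of Theorem~\ref{T:convergence} but not actually part of the proposition's statement, so your omission of it is harmless.
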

\begin{proof}
%\begin{proof}
%The statements about rank and filtration quotients are just 
%\[ H_{t-s} (\Tot V) \text{ and } H_{t-s+m} (\Tot (\K E\pi \tp V^{\ten 2})) \] are one-dimensional (the latter when $m\geq t-s$) are just Lemmas~\ref{L:totV} and~\ref{L:Hofhomotopyorbits}. The equality $H_{t-s}(\Tot V) = (F^{-s} / F^{-s-1}) H_{t-s}(\Tot V)$ is a direct consequence of Lemma~\ref{L:totV}.
%By strong convergence of the spectral sequence associated to $CN(\K E\pi \tp V^{\ten 2})$, we have that
%\[ (F^{-p} / F^{-p-1}) H_{t-s+m} (\Tot (\K E\pi \tp V^{\ten 2})) \cong E^\infty_{} =\begin{cases} \K & k\geq 2(t-s) \\ 0 & k< 2(t-s)\end{cases} \]
The first statements follow directly from Lemmas~\ref{L:totV} and~\ref{L:Hofhomotopyorbits}. The definition given for $Q^m: H(\Tot U) \to H(\Tot (\K E\pi \tp U^{\ten 2})$ in Proposition~\ref{T:geomtoalg} shows that it factors as \[ H(\Tot U) \overset{H(\nabla q^m)}{\longrightarrow} H(\K E\pi \tp (\Tot U)^{\ten 2}) \overset{\cong}{\to} H(\Tot (\K E\pi \tp U^{\ten 2})) \]
where the first map is nontrivial and the second map is an isomorphism by Proposition~\ref{P:interchange}. Thus $Q^m$ is nontrivial. We know everything about the filtrations from Lemmas~\ref{L:totV} and~\ref{L:Hofhomotopyorbits}, so we see that $Q^m(F^{-s}) \ci F^{-v}$, as desired.
\end{proof}

\begin{proof}[Proof of Theorem \ref{T:convergence}]
Let $V$ be a cosimplicial simplicial module and let $v$ be as in the statement of the theorem. 
When $V=U=\K \bkspace_{(\infty, s,t)}$, we know that the top left term and the bottom right term in the diagram \[ \xymatrix{
(F^{-s} / F^{-s-1}) H_{t-s} (\Tot V) \ar@{->}[r] \ar@{->}[dd]^{Q^m} & E_{-s,t}^\infty (NV) \ar@{->}[d]^{\ext^m} \\
& E_{-v, v-s+m+t}^\infty (W\tp NV^{\ten 2}) \ar@{->}[d]^{\cong}_{E^1 - iso} \\
(F^{-v} / F^{-v-1}) H_{t-s+m} (\Tot (\K E\pi \tp V^{\ten 2})) \ar@{->}[r] & E_{-v, v-s+m+t}^\infty (N(\K E\pi \tp V^{\ten 2}))
}\]
%commutes because
%\[ E_{-v, v-s+m+t}^\infty (N(\K E\pi \tp V^{\ten 2})) \cong \K \]
are both $\K$ by Proposition~\ref{P:filtrquotients} and \cite[Theorem 7.1]{me1}. Of course the horizontal maps are %just the abutment maps, hence 
injections, and the right hand composite is an injection by the definition of $Q^m$ on the spectral sequence level from %\cite[\textbf{p.~29}]{me1}.
\cite[Section 10]{me1}. Finally, the external operation \[ Q^m: H_{t-s}(\Tot U) \to H_{t-s+m} (\K E\pi \tp (\Tot U)^{\ten 2}) \] is nontrivial, so the $Q^m$ on the left is an injection by Propositions~\ref{P:interchange} and~\ref{P:filtrquotients}. Thus this diagram commutes.

All maps in this diagram are natural in $V$. We now show that this diagram commutes for an arbitrary cosimplicial simplicial module $V$. Let $x$ be an element of 
\[ (F^{-s} / F^{-s-1}) H_{t-s} (\Tot V) \ci E^\infty_{-s,t}(NV), \] 
which is then represented by a map $\K \bkspace_{(\infty, s, t)} \to V$. By naturality the two composites in the diagram evaluate to the same thing on the element $x$. But $x$ was arbitrary, so the diagram commutes.

Now let $X$ be a cosimplicial space. By Proposition~\ref{T:geomtoalg} the diagram 
\[ \xymatrix{ 
H (\Tot X) \ar@{->}[r] \ar@{->}[d]^{Q^m} & H(\Tot \K X) \ar@{->}[d]^{Q^m}\\
H (\Tot (E\pi \times_\pi X^{\times 2}) ) \ar@{->}[r] & H(\Tot (\K E\pi \tp (\K X)^{\ten 2})) \\
}\]
commutes. Then
\[ \xymatrix{ 
(F^{-s} / F^{-s-1}) H_{t-s} (\Tot X) \ar@{->}[r] \ar@{->}[d]^{Q^m} & (F^{-s} / F^{-s-1}) H_{t-s}(\Tot \K X) \ar@{->}[d]^{Q^m}\\
(F^{-v} / F^{-v-1}) H_{t-s+m} (\Tot (E\pi \times_\pi X^{\times 2}) ) \ar@{->}[r] & (F^{-v} / F^{-v-1}) H_{t-s+m}(\Tot (\K E\pi \tp (\K X)^{\ten 2})) \\
}\]
commutes as well. Combining this diagram with the one from the beginning of the proof, we see that the diagram in the theorem statement commutes.

Finally, we have 
\[ Q^m(F^{-s}(H\Tot V)) \ci F^{-v}H(\Tot (\K E\pi \tp V^{\ten 2}))\]
for an arbitrary cosimplicial simplicial module $V$ by Proposition~\ref{P:filtrquotients}, the fact that elements of $H(\Tot V)$ are representable, and naturality. By definition of the filtration and Proposition~\ref{T:geomtoalg} we thus have that
\[ Q^m [ F^{-s} H_{t-s} (\Tot (X))] \ci F^{-v}H_{t-s+m} (\Tot(E\pi \times_\pi X^{\times 2})) \]
for any cosimplicial space $X$.
\end{proof}

%%%%%%%%%%%%%%%%%%%%%%%%%%%%%%%%%%%%%%%%%%%%%%%%%%
%%%%%%%%%%%%%%%%%%%%%%%%%%%%%%%%%%%%%%%%%%%%%%%%%%
%%%%%%%%%%%%%%%%%%%%%%%%%%%%%%%%
%\appendix
%%%%%%%%%%%%%%%%%%%%%%%%%%%%%%%%
\section{Proof of Theorem~\ref{T:tensorproducts}}\label{S:Ptensor}
%%%%%%%%%%%%%%%%%%%%%%%%%%%%%%%%
%%%%%%%%%%%%%%%%%%%%%%%%%%%%%%%%%%%%%%%%%%%%%%%%%%
%%%%%%%%%%%%%%%%%%%%%%%%%%%%%%%%%%%%%%%%%%%%%%%%%%

In this section, we prove the following theorem, which states that in homology the map $\phi_\mess$ is compatible with tensor products.
\theoremstyle{plain}
\newtheorem*{thmfortytwo}{Theorem \ref{T:tensorproducts}}

\begin{thmfortytwo} The diagram
\[ \xymatrix{
H(\Tot_\mess U) \ten H(\Tot_\mess V) \ar@{->}[r]^-{\phi_\mess \ten \phi_\mess} \ar@{->}[d]^{H(\intb \nabla)}  &  H (T_\mess CNU) \ten H (T_\mess CNV) \ar@{->}[d]^{ H(\nabla AW)} \\
H (\Tot_\mess (U\ten V)) \ar@{->}[r]^{\phi_\mess} & H(T_\mess CN(U\ten V))
}\] commutes.
\end{thmfortytwo}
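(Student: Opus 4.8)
The plan is to prove the statement at the level of normalized chain complexes and then pass to homology. Write $\Psi$ for the right-hand vertical map, i.e.\ the composite of the lax structure map $T_\mess(-)\ten T_\mess(-)\to T_\mess(-\ten -)$, the cosimplicial Alexander--Whitney map of Definition~\ref{D:awshuff}, and the conormalization of the simplicial shuffle, and write the left-hand map as $N\intb\circ\nabla$ as in the discussion preceding \eqref{E:intb}. I will show that the two chain maps $\Psi\circ(\phi_\mess\ten\phi_\mess)$ and $\phi_\mess\circ N\intb\circ\nabla$, both running from $N\Tot_\mess U\ten N\Tot_\mess V$ to $T_\mess CN(U\ten V)$, coincide on the nose --- or, failing that, are naturally chain homotopic. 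All four maps involved are natural transformations of bifunctors of $(U,V)$ on cosimplicial simplicial $\K$-modules.

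The first step is a reduction to universal examples. Recall from section~\ref{S:totalization} that a $p$-simplex of $\Tot_\mess V$ is the same thing as a map of cosimplicial simplicial modules $f\colon \K\sk_\mess\Delta^\bullet\ten\K\Delta^p\to V$, and that $f=\Tot_\mess(f)$ evaluated on the identity $p$-simplex $\mathrm{id}_p$ of $\Tot_\mess(\K\sk_\mess\Delta^\bullet\ten\K\Delta^p)$. Hence every elementary tensor of $N\Tot_\mess U\ten N\Tot_\mess V$ is pulled back, along a map of pairs into $(U,V)$, from $\mathrm{id}_p\ten\mathrm{id}_q$ for the representing objects $\K\sk_\mess\Delta^\bullet\ten\K\Delta^p$ and $\K\sk_\mess\Delta^\bullet\ten\K\Delta^q$. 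By naturality of the four maps and bilinearity, it suffices to verify that $\Psi\circ(\phi_\mess\ten\phi_\mess)$ and $\phi_\mess\circ N\intb\circ\nabla$ agree on $\mathrm{id}_p\ten\mathrm{id}_q$ in this universal situation; since this is a chain-level identity, strict commutativity there gives strict commutativity everywhere.

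The second step is the universal computation. Because $\slush{\mathrm{id}_p}=T_\mess CN(\mathrm{id}_p)$ is the identity, $\phi_\mess(\mathrm{id}_p)=\prod_{k=0}^\mess \imath_k\wedge\imath_p$ is literally the product of shuffled fundamental classes from \eqref{E:phiell}, and likewise for $\mathrm{id}_q$; so applying the lax map, the cosimplicial $AW$, and the simplicial shuffle expands $\Psi(\phi_\mess(\mathrm{id}_p)\ten\phi_\mess(\mathrm{id}_q))$ into an explicit column-by-column sum of shuffled fundamental simplices of products of standard simplices. On the other path, $\nabla(\mathrm{id}_p\ten\mathrm{id}_q)$ is the usual sum over $(p,q)$-shuffles of degenerate copies of $\mathrm{id}_p$ and $\mathrm{id}_q$, $\intb$ inserts the diagonal of $\K\sk_\mess\Delta^\bullet\ten\K\Delta^{p+q}$, and $\phi_\mess$ then evaluates against $\prod_{k=0}^\mess \imath_k\wedge\imath_{p+q}$; invoking the standard Eilenberg--Zilber formulas for the shuffle of fundamental classes and for $AW$ applied to a diagonal, together with coassociativity of the diagonal, this reduces to the same explicit sum. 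The identification of the two sides is then a purely combinatorial matching of iterated shuffles.

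The main obstacle is exactly this combinatorial matching: $\phi_\mess$ already contains shuffle maps in the definition of the classes $\imath_k\wedge\imath_q$, while the right-hand vertical contributes further Alexander--Whitney and shuffle maps, so reconciling the two sides requires careful bookkeeping using the associativity and coherence of the shuffle map, its compatibility with $AW$ along diagonals, and the interaction between the simplicial shuffle and the cosimplicial Alexander--Whitney map. If the two chain maps should turn out not to agree strictly, the fallback is an acyclic-models argument: the source bifunctor $(U,V)\mapsto N\Tot_\mess U\ten N\Tot_\mess V$ is free on the models $(\K\sk_\mess\Delta^\bullet\ten\K\Delta^p,\,\K\sk_\mess\Delta^\bullet\ten\K\Delta^q)$, and the target $(U,V)\mapsto T_\mess CN(U\ten V)$ is acyclic on these models in the range that matters --- using that $\phi_\mess$ is a quasi-isomorphism and that $\Tot_\mess$ of such a representing object is readily computed --- which supplies the required natural chain homotopy and hence the equality on homology. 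Either way, the crux of the proof is the combinatorics of the shuffle and Alexander--Whitney maps applied to fundamental classes.
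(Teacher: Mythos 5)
Your outline captures the right \emph{shape} of the argument --- the paper, too, proves a chain-level identity by writing out both composites applied to a general element and matching the resulting sums of iterated face/degeneracy operators --- and your observation that everything is pulled back from the identity $p$-simplex of $\Tot_\mess(\K\sk_\mess\Delta^\bullet\ten\K\Delta^p)$ is a valid reformulation of what Calculations~\ref{CALC:upperrightfortytwo} and~\ref{CALC:lowerfortytwo} do with arbitrary $f$ and $g$. The descriptions of the two vertical maps are also correct.

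The genuine gap is that you stop precisely where the work begins. You write that both expansions ``reduce to the same explicit sum'' and that ``the identification of the two sides is then a purely combinatorial matching of iterated shuffles,'' but that matching is not routine bookkeeping: the two composites produce sums indexed by qualitatively different shuffle data. One side is indexed by pairs of shuffles $(\alpha,\beta)\in\shuff{k,j}\times\shuff{\bk,\bj}$ running over a splitting $p=k+\bk$, $q=j+\bj$, while the other side is indexed by a single $(p,q)$-shuffle $\gamma$ together with a cut at homological degree $z$. Reconciling them requires the explicit bijection
\[
\eta:\coprod_k \shuff{k,z-k}\times\shuff{p-k,q+k-z}\to\shuff{p,q}
\]
of Proposition~\ref{P:shuffbijection} (a combinatorial refinement of Vandermonde's identity), together with a verification that $\eta$ intertwines the face/degeneracy strings arising on the two sides. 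That proposition and the ensuing check of four operator identities constitute the substance of the proof; your proposal names the obstacle but does not overcome it, so it is not yet a proof. Your acyclic-models fallback is also only asserted, not established: you would need to prove that the bifunctor $(U,V)\mapsto N\Tot_\mess U\ten N\Tot_\mess V$ is free on the proposed models, that $(U,V)\mapsto T_\mess CN(U\ten V)$ is acyclic on them (in the appropriate bifunctorial sense), and that the two natural transformations agree on a base stratum --- none of which is immediate, and none of which the paper uses. As it stands, the proposal identifies the correct strategy but omits the central combinatorial lemma that makes it work.
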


We prove this on the chain level by showing that the diagram
\[ \xymatrix{
N  \Tot_\mess U \ten N \Tot_\mess V \ar@{->}[r]^{\phi_\mess \ten \phi_\mess} 
& T_\mess CNU \ten T_\mess CNV \ar@{->}[d] 
\\ N[ \Tot_\mess U \ten  \Tot_\mess V] \ar@{->}[u]^{AW} \ar@{->}[dd]^{N(\intb)}
& T_\mess[CNU \ten CN V] \ar@{->}[d]^{T_\mess AW} 
\\ & T_\mess C[NU \ten N V]
\\ N[ \Tot_\mess (U \ten V)] \ar@{->}[r]_{\phi_\mess} 
&T_\mess CN (U\ten V)  \ar@{->}[u]_{T_\mess C(AW)}
}\]
commutes. We will do this by calculating each composite explicitly and using an elementary fact about shuffles. The result will then follow from the K\"unneth theorem and the Eilenberg-Zilber theorem.

The reader may wish to review the definitions of $\slush{f}$ from \eqref{E:funderline} and $\phi_\mess$ from \eqref{E:phiell}. We let $\shuff{p,q}$ be the set of $(p,q)$-shuffles, considered as permutations of the set $\set{0, 1, \dots, p+q-1}$.

\begin{calc}\label{CALC:upperrightfortytwo}
Let $(f) \ten (g) \in N[  \Tot_\mess (U) \ten  \Tot_\mess(V)]_q$, and consider the image of $(f)\ten (g)$ under the composite 
\[ \xymatrix{
N  \Tot_\mess U \ten N \Tot_\mess V \ar@{->}[r]^-{\phi_\mess \ten \phi_\mess} & T_\mess CNU \ten T_\mess CNV \ar@{->}[d] \\
N[ \Tot_\mess U \ten  \Tot_\mess V] \ar@{->}[u]^{AW} & T_\mess[CNU \ten CN V] \ar@{->}[d]^{T_\mess (AW)} \\
& T_\mess C[NU \ten N V].
}\] The restriction of the image to the component in cosimplicial degree $p$ with the first component of degree $z$ (i.e. the part in $T_\mess C^p[N_z U \ten NV]$) is
\[ \sum_{j+\bj=q} \sum_{\alpha, \beta \in \shuff{k,j} \times \shuff{\bk,\bj}}\left( \begin{gathered} f ( d^{k+\bk} \cdots d^{k+1}s^{\alpha(k)} \cdots s^{\alpha(k+j-1)}, d^q \cdots d^{j+1}s^{\alpha(0)} \cdots s^{\alpha(k-1)})\\ \ten g( d^{k-1} \cdots d^{0} s^{\beta(\bk)}  \cdots s^{\beta(\bk + \bj -1)},  d^{j-1} \cdots d^0 s^{\beta(0)} \cdots s^{\beta(\bk-1)})
\end{gathered} \right). \]
\end{calc}
%
%We will show that the following two composites are the same. The first is (I)
%
%and the second is (II)
%
%
%We will take tensor $(f) \ten (g) \in N[  \Tot_\mess (U) \ten  \Tot_\mess(V)]_q$ and compute the image of both (I) and (II) in cosimplicial degree $p$ and simplicial degree of the first component $z$.
\begin{proof}
Let \begin{align*}
(F) &\in (\Tot_\mess U)_j \\
(G)&\in(\Tot_\mess V)_\bj
\end{align*}
and consider the elementary tensor $(F) \ten (G) \in (N \Tot_\mess U)_j \ten (N \Tot_\mess  V)_\bj$.
We have, on the top,
\[ (F) \ten (G) \mapsto \left[ \slush{ F} \sum_{k=0}^\mess \imath_k \wedge \imath_j \right]  \ten \left[ \slush{ G} \sum_{\bk=0}^\mess \imath_\bk \wedge \imath_\bj \right]. \] 
In cosimplicial degree $p\leq \mess$, the image of this element under the Alexander-Whitney map is
\begin{multline*} \sum_{k=0}^\mess \sum_{\bk=0}^{k-p} \left[ d^{k+\bk} \cdots d^{k+1} F^k (\imath_k \wedge \imath_j) \right] \ten \left[ d^{k-1} \cdots d^0 G^{\bk} (\imath_\bk \wedge \imath_\bj) \right] \\
= \sum_{k+\bk=p} \Big[ F^{p} ((d^{k+\bk} \cdots d^{k+1} \imath_k)\wedge \imath_j) \Big] \ten \Big[ G^{p} ((d^{k-1} \cdots d^{0} \imath_\bk)\wedge \imath_\bj) \Big]. \end{multline*}
Focusing only on the elements for which the first component is in homological degree $z$, we have $k=z-j$ and $\bk=p+j-z$. 
Recall from section~\ref{S:totalization} that $\imath_k \wedge \imath_q$ is $\nabla(\imath_k\ten \imath_q)$.
Thus the previous expression becomes
\[  \sum_{\alpha \in \shuff{k,j}} \sum_{\beta \in \shuff{\bk, \bj}} \left( \begin{gathered} F^p (s_{\alpha(k+j-1)} \cdots s_{\alpha(k)} d^{k+\bk} \cdots d^{k+1} \imath_k,  s_{\alpha(k-1)} \cdots s_{\alpha(0)} \imath_j) \\
\ten G^p (s_{\beta(\bk + \bj -1)} \cdots s_{\beta(\bk)} d^{k-1} \cdots d^{0} \imath_\bk,  s_{\beta(\bk-1)} \cdots s_{\beta(0)} \imath_\bj) \end{gathered} \right) \]
where $\shuff{a,b}$ is the set of $(a,b)$-shuffles, thought of as permutations of the set $\set{0, \dots, a+b-1}$. Of course $\imath_k$ is just the identity map $[k] \to [k]$, so this is
\begin{equation*}\label{E:bigFG} \sum_{\alpha, \beta \in \shuff{k,j} \times \shuff{\bk,\bj}}\left( \begin{gathered} F ( d^{k+\bk} \cdots d^{k+1}s^{\alpha(k)} \cdots s^{\alpha(k+j-1)},s^{\alpha(0)} \cdots s^{\alpha(k-1)})\\ \ten G( d^{k-1} \cdots d^{0} s^{\beta(\bk)}  \cdots s^{\beta(\bk + \bj -1)},  s^{\beta(0)} \cdots s^{\beta(\bk-1)})
\end{gathered} \right). \end{equation*}
%Here the input for $F$ is a map $[z] = [j+k] \to [k] \to [k+\bk] = [p]$ and a map $[z]=[j+k] \to [j]$.
The domains for $F$ and $G$ are $\Delta_z^p \times \Delta_z^j$.

Now if we consider $(f) \in \Tot_\mess (U)_{q}$ and $(g) \in \Tot_\mess(V)_q$, we can apply the Alexander-Whitney map to $((f),(g))$ and get
\[ \sum_{j+\bj = q} \big((f \circ (\id_{\sk}\times d^q \cdots d^{j+1})),  (g \circ (\id_{\sk} \times d^{j-1} \cdots d^0))\big) \]
where we are precomposing $f$ with the map $\sk_\mess \Delta \times \Delta^q \to \sk_\mess \Delta \times \Delta^j$ and similarly for $g$.
Precomposing with $AW$, equation (\ref{E:bigFG}) becomes
\begin{equation*}\label{E:smallfg} \sum_{j+\bj=q} \sum_{\alpha, \beta \in \shuff{k,j} \times \shuff{\bk,\bj}}\left( \begin{gathered} f ( d^{k+\bk} \cdots d^{k+1}s^{\alpha(k)} \cdots s^{\alpha(k+j-1)}, d^q \cdots d^{j+1}s^{\alpha(0)} \cdots s^{\alpha(k-1)})\\ \ten g( d^{k-1} \cdots d^{0} s^{\beta(\bk)}  \cdots s^{\beta(\bk + \bj -1)},  d^{j-1} \cdots d^0 s^{\beta(0)} \cdots s^{\beta(\bk-1)})
\end{gathered} \right). \end{equation*}

\end{proof}

%There is, of course, a bijection of sets
%\[ \coprod_{j=z-\min(z,p)}^z \shuff{z-j,j} \times \shuff{p+j-z,q-j} \overset{\cong}{\to} \shuff{p,q} \]
%obtained by letting $\shuff{z-j,j}$ act on $\set{0, 1, \dots, z-1}$ and $\shuff{p+j-z,q-j}$ act on $\set{z, \dots, p+q-1}$. Given the conditions above we have $z-j=k$, $p+j-z=\bk$, and $q-j=\bj$.
%\textbf{COME BACK HERE AFTER DOING THE OTHER WAY}

\begin{calc}\label{CALC:lowerfortytwo}
Let $(f) \ten (g) \in N[  \Tot_\mess (U) \ten  \Tot_\mess(V)]_q$, and consider the image of $(f)\ten (g)$ under the composite 
\[ \xymatrix{
N[ \Tot_\mess U \ten \Tot_\mess V] \ar@{->}[d]^{N(\intb)} &  T_\mess C[NU \ten NV] \\
N[ \Tot_\mess (U \ten V)] \ar@{->}[r]^{\phi_\mess} &T_\mess CN (U\ten V)  \ar@{->}[u]^{T_\mess C (AW)}.
}\]
 The restriction of the image to the component in cosimplicial degree $p$ with the first component of degree $z$ (i.e. the part in $T_\mess C^p[N_z U \ten NV]$) is
 \[
\sum_{\gamma \in \shuff{p,q}} \left( \begin{gathered}
f( s^{\gamma(p)} \cdots s^{\gamma(p+q-1)}d^{p+q} \cdots d^{z+1},  s^{\gamma(0)} \cdots s^{\gamma(p-1)} d^{p+q} \cdots d^{z+1}) \\
\ten g(s^{\gamma(p)} \cdots s^{\gamma(p+q-1)} d^{z-1} \cdots d^0,  s^{\gamma(0)} \cdots s^{\gamma(p-1)} d^{z-1} \cdots d^0)
 \end{gathered} \right).
 \]
\end{calc}
\begin{proof}

%We now turn our attention to (II). 
Let $((f),(g))$ be as in the statement. Applying $\phi_\mess$ and the interchange map $\intb$ from \eqref{E:intb}, we obtain
\[ \sum_{l=0}^\mess \slush{\intb(f,g)} (\imath_l \wedge \imath_q). \] The part in cosimplicial degree $p$ is
\[  \slush{\intb(f,g) } (\imath_p \wedge \imath_q) = \sum_{\gamma \in \shuff{p,q}} \left( \begin{gathered}
f(s_{\gamma(p+q-1)} \cdots s_{\gamma(p)} \imath_p, s_{\gamma(p-1)} \cdots s_{\gamma(0)} \imath_q) \\
\ten g(s_{\gamma(p+q-1)} \cdots s_{\gamma(p)} \imath_p, s_{\gamma(p-1)} \cdots s_{\gamma(0)} \imath_q)
 \end{gathered} \right). \]
Applying $d_{z+1} d_{z+2} \cdots d_{p+q} \ten d_0 \cdots d_{z-1}$ amounts to applying the Alexander-Whitney map and then projecting onto the component where the first term is in homological degree $z$. The result is then
\[ d_{z+1} \cdots d_{p+q} \ten d_0 \cdots d_{z-1} \sum_{\gamma \in \shuff{p,q}} \left( \begin{gathered}
f( s^{\gamma(p)} \cdots s^{\gamma(p+q-1)} ,  s^{\gamma(0)} \cdots s^{\gamma(p-1)}) \\
\ten g(s^{\gamma(p)} \cdots s^{\gamma(p+q-1)} ,  s^{\gamma(0)} \cdots s^{\gamma(p-1)})
 \end{gathered} \right) \]
\begin{equation*}\label{E:othercomposite}
=\sum_{\gamma \in \shuff{p,q}} \left( \begin{gathered}
f( s^{\gamma(p)} \cdots s^{\gamma(p+q-1)}d^{p+q} \cdots d^{z+1},  s^{\gamma(0)} \cdots s^{\gamma(p-1)} d^{p+q} \cdots d^{z+1}) \\
\ten g(s^{\gamma(p)} \cdots s^{\gamma(p+q-1)} d^{z-1} \cdots d^0,  s^{\gamma(0)} \cdots s^{\gamma(p-1)} d^{z-1} \cdots d^0)
 \end{gathered} \right).
 \end{equation*}

\end{proof}

We must show that the two calculations give the same result. 
%We must show that expression~(\ref{E:smallfg}) is equal to this. 
To do so, we use the following version of Vandermonde's identity.

\begin{prop}[Shuffle Splitting]\label{P:shuffbijection} Fix $p, q \geq 0$ and $z\in (0,p+q)$ integers.
There is a bijection of sets
\[ \eta: \coprod_{k=\max (0,z-q)}^{\min(p,z)} \shuff{k,z-k} \times \shuff{p-k,q+k-z} \to \shuff{p,q} \] given, for $(\alpha, \beta) \in \shuff{k,z-k} \times \shuff{p-k,q+k-z}$, by 
\[
%\alpha, \beta & \mapsto \gamma \\
\eta(\alpha, \beta)(c)= \begin{cases}
\alpha(c) & c\in [0,k-1] \\
\beta(c-k) + z & c\in [k,p-1] \\
\alpha(c-p+k) & c\in [p,p+z-k-1] \\
\beta(c-z) + z & c\in [p+z-k,p+q-1].
\end{cases}
\]\end{prop}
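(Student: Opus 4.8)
The plan is to verify directly that the map $\eta$ defined by the four-case formula is a well-defined injection into $\shuff{p,q}$ between two finite sets of equal cardinality, so that counting finishes the job. The cardinality bookkeeping is exactly Vandermonde: $|\shuff{a,b}| = \binom{a+b}{a}$, so the left-hand side has size $\sum_{k} \binom{z}{k}\binom{p+q-z}{p-k} = \binom{p+q}{p} = |\shuff{p,q}|$, where the sum ranges over $k$ from $\max(0,z-q)$ to $\min(p,z)$ (the constraints $0 \le k \le p$, $0 \le z-k \le q$). So it suffices to show $\eta$ lands in $\shuff{p,q}$ and is injective; surjectivity then comes for free.

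First I would recall/fix the combinatorial model: a $(p,q)$-shuffle $\gamma$ is a permutation of $\set{0,1,\dots,p+q-1}$ that is increasing on the block $[0,p-1]$ and increasing on the block $[p,p+q-1]$; equivalently it is determined by the increasing word $\gamma(0) < \dots < \gamma(p-1)$ together with the complementary increasing word $\gamma(p) < \dots < \gamma(p+q-1)$. Then I would check, for $(\alpha,\beta) \in \shuff{k,z-k}\times\shuff{p-k,q+k-z}$, that $\eta(\alpha,\beta)$ is increasing on $[0,p-1]$ and on $[p,p+q-1]$, and that it is a bijection of $\set{0,\dots,p+q-1}$. The conceptual picture that organizes the case analysis: think of $\shuff{k,z-k}$ as a shuffle of "first $k$ of the $p$-block" with "first $z-k$ of the $q$-block" living in the value range $[0,z-1]$, and $\shuff{p-k,q+k-z}$ as a shuffle of "last $p-k$ of the $p$-block" with "last $q+k-z$ of the $q$-block" living in the value range $[z, p+q-1]$ (which is why we add $z$ to $\beta$). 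Concretely: on $[0,k-1]$, $\eta(\alpha,\beta) = \alpha$ takes values in $[0,z-1]$ and is increasing; on $[p, p+z-k-1]$ it is $\alpha$ shifted in its argument (reading off the part of $\alpha$ on its own $q$-side block $[k, z-1]$), still taking values in $[0,z-1]$, still increasing; similarly the $\beta$-parts land in $[z,p+q-1]$. To see it is increasing \emph{across} the pieces of the $p$-block $[0,p-1]$, note the piece $[0,k-1]$ has values in $[0,z-1]$ and the piece $[k,p-1]$ has values in $[z,p+q-1]$ via $\beta$, so the first piece's values all precede the second's; monotonicity within each piece is inherited from $\alpha$ and $\beta$ being shuffles. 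The same argument applies to the $q$-block $[p,p+q-1]$, split at $p+z-k$.

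Having established that $\eta(\alpha,\beta)$ is well-defined with values in $\shuff{p,q}$, injectivity is the step to be a little careful with, and it is where I expect the only real friction. I would argue it by constructing the inverse explicitly, or equivalently by observing that $\eta(\alpha,\beta)$ determines $(\alpha,\beta)$: from $\gamma = \eta(\alpha,\beta)$ one recovers $k$ as $|\{\,c \in [0,p-1] : \gamma(c) \in [0,z-1]\,\}|$ (equivalently, $k$ is determined by where the value $z$ sits relative to the $p$-block word), then $\alpha$ is read off as the sub-shuffle of $\gamma$ on values $[0,z-1]$ and $\beta$ (after subtracting $z$) as the sub-shuffle on values $[z,p+q-1]$, each re-indexed appropriately. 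Since an injection between finite sets of the same cardinality is automatically a bijection, this completes the proof; I would simply remark that the explicit inverse is the four-case formula solved for $\alpha,\beta$, and not belabor the re-indexing arithmetic. The main obstacle, such as it is, is purely notational — keeping the argument-shifts ($c \mapsto c-k$, $c\mapsto c-p+k$, $c\mapsto c-z$) and value-shifts ($+z$) straight across the four cases and checking the interfaces at $c=k$, $c=p$, and $c=p+z-k$; there is no genuine mathematical difficulty beyond Vandermonde's identity itself.
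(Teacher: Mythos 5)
Your proof takes essentially the same route as the paper: verify that $\eta(\alpha,\beta)$ is a well-defined $(p,q)$-shuffle, show that $\gamma=\eta(\alpha,\beta)$ determines $(k,\alpha,\beta)$, and finish with Vandermonde's identity. Incidentally, your recovery formula $k = |\{\,c\in[0,p-1] : \gamma(c) < z\,\}|$ is a touch more robust than the paper's $\min\gamma^{-1}[z,p+q-1]$, which misreports $k$ in the edge case $k=p<z$ (there it returns $z$, though this causes no harm since $z$ then lies outside the allowed range for the index $k$).
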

\begin{proof}
One checks that $\gamma = \eta (\alpha, \beta)$ is a $(p,q)$-shuffle and that  \[ \eta_k: \shuff{k,z-k} \times \shuff{p-k,q+k-z} \to \shuff{p,q} \] is injective for all $k$. Furthermore, if $\gamma = \eta_k (\alpha, \beta)$ then we can recover  $k$ as $\min \gamma^\inv [z,p+q-1]$,  so $\eta =\coprod \eta_k$ is an injection as well.
%
%Now we have an injection of sets
%\[ \eta = \sqcup \eta_k: \coprod_{k=\max (0,z-q)}^{\min(p,z)} \shuff{k,z-k} \times \shuff{p-k,q+k-z} \hookrightarrow \shuff{p,q}. \]
Now we can see that $\eta$ is a bijection by comparing sizes of sets. This follows from  the fact that $|\shuff{p,q}| = \binom{p+q}{p}$ and Vandermonde's identity
\[ \sum_{k=\max (0,z-q)}^{\min(p,z)} \binom{z}{k}\binom{p+q-z}{p-k} = \sum_{k=0}^z \binom{z}{k}\binom{p+q-z}{p-k}  = \binom{p+q}{p}.\]
\end{proof}

\begin{proof}[Proof of Theorem~\ref{T:tensorproducts}]
%Consider the composite $s^{a_0} \cdots s^{a_{n-1}}$ considered as an ordered map $[w] \to [w-n]$, where  $a_0 < a_1 < \cdots < a_{n-1} < w$.
%It is given by the formula 
%\[
%s^{a_0} \cdots s^{a_{n-1}} (c) = \begin{cases} c & c\leq a_0 \\ c-t & c\in (a_{t-1}, a_t] \\ c-n & c> a_{n-1}. \end{cases}
%\]

Isolate $(\alpha, \beta) \in \shuff{k,j} \times \shuff{\bk, \bj}$ and let $\gamma = \eta(\alpha, \beta)$. When applying other results from this section, we set $z=j+k$, $p= k+\bk$, and $q=j+\bj$. 
In light of Proposition~\ref{P:shuffbijection}, to show that the results from calculations~\ref{CALC:upperrightfortytwo} and \ref{CALC:lowerfortytwo} are equal, we must show the following equalities of ordered maps originating from $[z]$:
\begin{align*}
s^{\gamma(p)} \cdots s^{\gamma(p+q-1)}d^{p+q} \cdots d^{z+1} &= d^{k+\bk} \cdots d^{k+1}s^{\alpha(k)} \cdots s^{\alpha(k+j-1)}\\
s^{\gamma(0)} \cdots s^{\gamma(p-1)} d^{p+q} \cdots d^{z+1} &= d^q \cdots d^{j+1}s^{\alpha(0)} \cdots s^{\alpha(k-1)}\\
s^{\gamma(p)} \cdots s^{\gamma(p+q-1)} d^{z-1} \cdots d^0 &= d^{k-1} \cdots d^{0} s^{\beta(\bk)}  \cdots s^{\beta(\bk + \bj -1)} \\
s^{\gamma(0)} \cdots s^{\gamma(p-1)} d^{z-1} \cdots d^0 &= d^{j-1} \cdots d^0 s^{\beta(0)} \cdots s^{\beta(\bk-1)}.
\end{align*}
The sequence $\gamma(p), \cdots, \gamma(p+q-1)$ is, by definition of $\eta$, \[ \alpha(k), \dots,  \alpha(k+j-1), \beta(\bk)+z, \dots, \beta(\bk + \bj -1) + z. \] This establishes the first and third equalities.  The sequence $\gamma(0), \dots, \gamma(p-1)$ is
\[ \alpha(0), \dots, \alpha(k-1), \beta(0)+z, \dots, \beta(\bk-1) + z,\]
which establishes the second and fourth equalities.

%
%To see that expressions~(\ref{E:smallfg}) and (\ref{E:othercomposite}) are equal,
%consider $(\alpha, \beta) \in \shuff{k,j} \times \shuff{\bk, \bj}$ and let $\gamma = \eta(\alpha, \beta)$. One shows the following equalities
%
%which is easy, but tedious, to do by evaluating using the formula
%
%whenever .
Hence calculations~\ref{CALC:upperrightfortytwo} and \ref{CALC:lowerfortytwo} give the same result, which completes the proof.
\end{proof}

%%%%%%%%%%%%%%%%%%%%%%%%%%%%%%%%%%%%%%%%%%%%%%%%%%
%%%%%%%%%%%%%%%%%%%%%%%%%%%%%%%%%%%%%%%%%%%%%%%%%%
\section{Proof of Theorem~\ref{T:comodules}}\label{S:prooftcomodules}
%%%%%%%%%%%%%%%%%%%%%%%%%%%%%%%%%%%%%%%%%%%%%%%%%%
%%%%%%%%%%%%%%%%%%%%%%%%%%%%%%%%%%%%%%%%%%%%%%%%%%

We now return to the proof of

\theoremstyle{plain}
\newtheorem*{thmfiftyfour}{Theorem \ref{T:comodules}}

\begin{thmfiftyfour} Let $V$ be a cosimplicial simplicial $\K$-module. Then \[
\phi_\infty N(\intalg): N(\K E\pi \tp (\Tot(V))^{\ten 2}) \to  TCN  (\K E\pi \tp V^{\ten 2})) \] is a map of $\bw$-comodules.
\end{thmfiftyfour}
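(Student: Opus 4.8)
The plan is to reduce the statement to a single square of cosimplicial simplicial $\K$-modules that commutes strictly, and then obtain the comodule-map property by decorating that square with normalization, the Alexander--Whitney and shuffle maps, and $\phi_\infty$, invoking only naturality and compatibilities already in hand. Recall that the $\bw$-coaction on the source is $\coact_2=AW\circ N\coact_1$ (Proposition~\ref{P:nzh}) applied with $Z=(\Tot V)^{\ten 2}$, while the $\bw$-coaction on the target is $\coact_3=(\mathrm{iso})\circ T\nabla\circ TC(AW)\circ TCN(\coact_1)$, where $\coact_1$ is applied in each cosimplicial degree, $AW$ is the simplicial Alexander--Whitney map, $\nabla$ is the cosimplicial shuffle map, and $(\mathrm{iso})$ is the canonical isomorphism $T(\bw^v\ten B)\cong\bw\ten TB$ (Proposition~\ref{P:nnzh}). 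Every ingredient of both coactions other than $\intalg$ and $\phi_\infty$ is applied functorially or is a natural transformation, so it suffices to prove that the square
\[ \xymatrix{
\K E\pi\tp(\Tot V)^{\ten 2} \ar@{->}[r]^-{\intalg} \ar@{->}[d]_{\coact_1} & \Tot(\K E\pi\tp V^{\ten 2}) \ar@{->}[d]^{\Tot(\coact_1)} \\
\K B\pi\ten\K E\pi\tp(\Tot V)^{\ten 2} \ar@{->}[r]^-{\wild{\intalg}} & \Tot(\K B\pi\ten\K E\pi\tp V^{\ten 2})
}\]
of cosimplicial simplicial $\K$-modules commutes, where $\Tot(\coact_1)$ applies $\coact_1$ in each cosimplicial degree (so on the target $\K B\pi$ is constant cosimplicial) and $\wild{\intalg}=\intb\circ(\mathrm{id}_{\K B\pi}\ten\intalg)$ is the interchange map for the functor $U\mapsto\K B\pi\ten\K E\pi\tp U^{\ten 2}$, using the identification $\Tot(c\,\K B\pi)=\K B\pi$.

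I would check this square by evaluating both composites on an elementary tensor $e\ten f\ten g$ in simplicial degree $q$, with $e\in E\pi_q$ and $f,g\in(\Tot V)_q$. Using $\coact_1(a\ten z)=\bar a\ten(a\ten z)$ from the Remark after Lemma~\ref{L:rhoone} and the fact that $E\pi\to B\pi$ is a simplicial map, both routes send $e\ten f\ten g$ to the cosimplicial map $(\faked_1,\faked_2)\mapsto\overline{e(\faked_2)}\ten e(\faked_2)\ten f(\faked_1,\faked_2)\ten g(\faked_1,\faked_2)$. This is the only genuinely new computation and it is routine; coassociativity of $\coact_1$, hence of $\coact_2$ and $\coact_3$, was already settled in Propositions~\ref{P:nzh} and~\ref{P:nnzh}, so this compatibility square is all that is required from the $E\pi$-side.

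Granting the square, the assembly is: apply $N$, postcompose the left edge $N\coact_1$ with $AW$ to recognize $\coact_2$, and postcompose the right edge $N\intalg$ with $\phi_\infty$. Naturality of $\phi_\infty$ along the cosimplicial map $\coact_1$ gives $TCN(\coact_1)\circ\phi_\infty=\phi_\infty\circ N\Tot(\coact_1)$, so after further applying $TC(AW)$, $T\nabla$ and $(\mathrm{iso})$ the right edge becomes $\coact_3\circ\phi_\infty N\intalg$. What then remains is the identity
\[ (\mathrm{iso})\circ T\nabla\circ TC(AW)\circ\phi_\infty\circ N\wild{\intalg}=(\mathrm{id}_{\bw}\ten\phi_\infty N\intalg)\circ AW, \]
after which stacking the pieces yields exactly the square expressing that $\phi_\infty N\intalg$ is a map of $\bw$-comodules. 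This last identity is the chain-level compatibility of $\phi_\infty$ with the interchange $\intb$ proved in Section~\ref{S:Ptensor} (the chain-level form of Theorem~\ref{T:tensorproducts}), specialized to the constant cosimplicial object $c\,\K B\pi$: for that object $\phi_\infty$ is the identity of $\bw=N\K B\pi$ and $CN(c\,\K B\pi)=\bw^v$ is concentrated in the zeroth column, so the cosimplicial Alexander--Whitney and shuffle maps occurring in Section~\ref{S:Ptensor} trivialize under $T(\bw^v\ten B)\cong\bw\ten TB$ and that diagram collapses to precisely the displayed equality.

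The step I expect to be the main obstacle is this final identification: one must carefully match the several Alexander--Whitney and shuffle maps built into the definitions of $\coact_2$ and $\coact_3$ against those produced by the specialization of Theorem~\ref{T:tensorproducts}, and in particular verify that $\phi_\infty$ on a constant cosimplicial object is the identity and that the maps involving $\bw^v$ trivialize as claimed. By contrast, the primitive square and the naturality bookkeeping are routine.
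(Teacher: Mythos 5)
Your proposal is correct and takes a genuinely different route from the paper's. The paper proves Theorem~\ref{T:comodules} by a head-on computation: it expands $(1\ten\phi_\infty)(1\ten N\intalg)\coact_2$ as Calculation~\ref{CALC:fiftyfour}, expands $\coact_3\phi_\infty N\intalg$ by hand in the proof, then restricts to the $(k,p)$-shuffles $\mu$ with $r\le\mu(0)$ (the others being degenerate in $\bw$) and matches the two sides via an explicit bijection $\booeta$ onto $(k,p-r)$-shuffles. Your approach instead factors the whole thing into (a) the strictly commuting square
\[
\xymatrix{\K E\pi\tp(\Tot V)^{\ten 2}\ar[r]^-{\intalg}\ar[d]_{\coact_1} & \Tot(\K E\pi\tp V^{\ten 2})\ar[d]^{\Tot\coact_1}\\ \K B\pi\ten\K E\pi\tp(\Tot V)^{\ten 2}\ar[r]^-{\wild{\intalg}} & \Tot(\K B\pi\ten\K E\pi\tp V^{\ten 2})}
\]
which is a one-line check with the formula $\coact_1(a\ten z)=\bar a\ten a\ten z$, and (b) the chain-level form of Theorem~\ref{T:tensorproducts} (Section~\ref{S:Ptensor}) specialized to the constant first factor $U=c\,\K B\pi$; you then note that $\phi_\infty$ and the cosimplicial $AW$/$\nabla$ trivialize on the constant factor and that the unlabeled map $TCNU\ten TCNV'\to T(CNU\ten CNV')$ is the inverse of the isomorphism $T(\bw^v\ten B)\cong\bw\ten TB$, whence everything cancels to give $\coact_3\,\phi_\infty N\intalg=(1\ten\phi_\infty N\intalg)\coact_2$. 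This is logically sound: the chain-level diagram in Section~\ref{S:Ptensor} is indeed proved on the nose (not merely up to homotopy), and your naturality bookkeeping, including $AW\circ N(1\ten\intalg)=(1\ten N\intalg)\circ AW$ and $TCN(\coact_1)\circ\phi_\infty=\phi_\infty\circ N\Tot(\coact_1)$, goes through. What the paper's route buys is that it proves the statement from scratch, independently of whether one notices that $\coact_3$ is literally built from $\intb$ for a constant factor; what your route buys is modularity and reuse, replacing a second shuffle calculation by a soft reduction to Section~\ref{S:Ptensor}. The only places that need the care you already flagged are: verifying that $\phi_\infty$ on $N\Tot(c\,\K B\pi)$ is the identity of $\bw$, that the cosimplicial $\nabla\circ AW$ is the identity when one factor is constant (so that $T\nabla\circ T(AW_{\text{cosimp}})=\id$), and that the iso and unlabeled map genuinely cancel. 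All three are routine consequences of the definitions in Sections~\ref{S:totalization} and~\ref{S:comodules}.
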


Take an elementary tensor $e\ten f \ten g \in \K E\pi \tp (\Tot V)^{\ten 2}$, where $e$, $f$, and $g$ are maps
\begin{align*}
e:& \Delta^p \to \K E\pi \\
f:& \Delta^\bullet \times \Delta^p  \to V \\
g:& \Delta^\bullet \times \Delta^p  \to V. \\
\end{align*}
Recall that
\[ N_p(\intalg): N_p(\K E\pi \tp (\Tot(V))^{\ten 2}) \to N_p \Tot (\K E\pi \tp V^{\ten 2})) \]
comes from
\[ \Delta^\bullet \times \Delta^p \overset{\text{diag}}{\longrightarrow} (\Delta^\bullet \times \Delta^p )^{\times 3} \twoheadrightarrow \Delta^p \times (\Delta^\bullet \times \Delta^p )^{\times 2},\]
so that $\intalg (e\ten f \ten g)$ is a map $\Delta^\bullet \times \Delta^p  \to\K E\pi \tp V^{\ten 2} $ which sends
\[ (\faked_1 , \faked_2) \mapsto e(\faked_2) \ten f(\faked_1 , \faked_2) \ten g(\faked_1 , \faked_2). \]

We wish to show that the diagram
\[ \scalebox{.80}{ 
\xymatrix{
N(\K E\pi \tp (\Tot(V))^{\ten 2}) \ar@{->}[d]^{\coact_2} \ar@{->}[r]^{N\intalg} & N \Tot (\K E\pi \tp V^{\ten 2})) \ar@{->}[r]^{\phi_\infty} & TCN (\K E\pi \tp V^{\ten 2})) \ar@{->}[d]^{\coact_3} \\
\bw \ten N (\K E\pi \tp (\Tot(V))^{\ten 2}) \ar@{->}[r]^{1\ten N\intalg} & \bw \ten N \Tot (\K E\pi \tp V^{\ten 2})) \ar@{->}[r]^{1\ten \phi_\infty} & \bw \ten TCN (\K E\pi \tp V^{\ten 2}))
} 
}
\]
commutes. Write $\bar{e}$ for the composite $\Delta^p \to \K E\pi \to \K B\pi$ and begin with the bottom left. 

%BOTTOM COMPOSITE

\begin{calc}\label{CALC:fiftyfour}
Let $e, f, g$ be $p$-simplices as above, and consider the image of $e\ten f \ten g$ under $(1\ten \phi_\infty) \circ (1 \ten N(\intalg)) \circ \coact_2$ in $\bw \ten TCN(\K E\pi \tp V^{\ten 2})$. Restricting to the piece in  \[ \bw_r \ten C^k N_{p+k-r}(\K E\pi \tp V^{\ten 2}) \cong C^k N_{p+k-r}(\K E\pi \tp V^{\ten 2}) \] for some $r\leq p$, we have that this is equal to
\[ \sum_{\alpha\in \shuff{k,p-r}} \left( \begin{gathered}
e(d^{r-1} \cdots d^0 s_{\alpha(k-1)} \cdots s_{\alpha(0)} \imath_{p-r} ) \\
\ten f^k (s_{\alpha(p+k-r-1)} \cdots s_{\alpha(k)} \imath_k, d^{r-1} \cdots d^0 s_{\alpha(k-1)} \cdots s_{\alpha(0)} \imath_{p-r} ) \\
\ten g^k (s_{\alpha(p+k-r-1)} \cdots s_{\alpha(k)} \imath_k, d^{r-1} \cdots d^0 s_{\alpha(k-1)} \cdots s_{\alpha(0)} \imath_{p-r} )
 \end{gathered} \right).\]
\end{calc}
\begin{proof}
Applying $\coact_2$ from \eqref{E:act2}, we get
\[ e\ten f \ten g \mapsto \sum_{r+s=p} d_{r+1} \cdots d_{p} \bar{e} \ten d_0 \cdots d_{r-1} (e\ten f \ten g). \] The bottom composite is the identity on the $\bw$ component, so at this point we restrict to the part in $\bw_{r}$ for a fixed $r \leq p$. We are thus only looking at
\[ d_{r+1} \cdots d_{p} \bar{e} \ten d_0 \cdots d_{r-1} (e\ten f \ten g), \]
%(e \ten f \ten g) \circ (1 \times d^{r-1} \cdots d^0)
and we write down
\[ \phi_\infty \intalg [ d_0 \cdots d_{r-1} (e\ten f \ten g)] = \phi_\infty [ \intalg (e \ten f \ten g ) \circ (1 \times d^{r-1} \cdots d^0)]. \]
Recall from \eqref{E:funderline} that if $f\in (\Tot V)_p$, then $\slush{f}$ is the induced  map
\[ \slush{f}: TCN(\K \Delta^\bullet \ten \Delta^p) \to TCN(V).\]  By definition of $\phi_\infty$, we have \[ \phi_\infty [ \intalg (e \ten f \ten g ) \circ (1 \times d^{r-1} \cdots d^0)]  =
\sum_k \slush{ \intalg (e \ten f \ten g ) \circ (1 \times d^{r-1} \cdots d^0)}  \imath_k \wedge \imath_{p-r}. \]
We now restrict to a fixed cosimplicial degree $k$. The element
\[ \slush{ \intalg (e \ten f \ten g ) \circ (1 \times d^{r-1} \cdots d^0)}  \imath_k \wedge \imath_{p-r} \]
is
\begin{multline*}
\slush{ \intalg (e \ten f \ten g ) \circ (1 \times d^{r-1} \cdots d^0)}  \nabla (\imath_k \ten \imath_{p-r}) \\
= \slush{ \intalg (e \ten f \ten g ) \circ (1 \times d^{r-1} \cdots d^0)} \sum_{\alpha} s_{\alpha(p+k-r-1)} \cdots s_{\alpha(k)} \imath_k \ten s_{\alpha(k-1)} \cdots s_{\alpha(0)} \imath_{p-r} \\
= \sum_\alpha \left( \begin{gathered}
e(d^{r-1} \cdots d^0 s_{\alpha(k-1)} \cdots s_{\alpha(0)} \imath_{p-r} ) \\
\ten f^k (s_{\alpha(p+k-r-1)} \cdots s_{\alpha(k)} \imath_k, d^{r-1} \cdots d^0 s_{\alpha(k-1)} \cdots s_{\alpha(0)} \imath_{p-r} ) \\
\ten g^k (s_{\alpha(p+k-r-1)} \cdots s_{\alpha(k)} \imath_k, d^{r-1} \cdots d^0 s_{\alpha(k-1)} \cdots s_{\alpha(0)} \imath_{p-r} )
 \end{gathered} \right),
\end{multline*}
where $\alpha$ runs among the set of $(k, p-r)$-shuffles. 
\end{proof}

\begin{rem} 
For a fixed $\alpha$, the important data  are the arguments to $e$, $f$, and $g$. We know $e$ only takes one argument, which we could write as
\begin{equation}\label{E:argumenttoe}
d^{r-1} \cdots d^0 s_{\alpha(k-1)} \cdots s_{\alpha(0)} (\imath_{p-r}) = d^{r-1} \cdots d^0 s^{\alpha(0)} \cdots s^{\alpha(k-1)} \in \Delta_{p-r+k}^p
\end{equation}
while $f$ and $g$ have an additional argument
\begin{equation}\label{E:argumenttofg}
s_{\alpha(p+k-r-1)} \cdots s_{\alpha(k)} \imath_k = s^{\alpha(k)} \cdots s^{\alpha(p+k-r-1)} \in \Delta_{p-r+k}^k.
\end{equation}
We will return to these later.
\end{rem}

%%% IDENTIFY THE ARGUMENTS!

%TOP COMPOSITE

\begin{proof}[Proof of Theorem~\ref{T:comodules}]
We begin with a calculation of the composite $\coact_3 \circ \phi_\infty \circ \intalg$. Applying $\phi_\infty \intalg$ to $e\ten f \ten g$ and examining the part in cosimplicial degree $k$ yields
\[ \slush{\intalg(e\ten f \ten g)} \imath_k \wedge \imath_p \\
= \sum_{\mu} \left( \begin{gathered} e(s_{\mu(k-1)} \cdots s_{\mu(0)} \imath_p) \\
\ten f^k(s_{\mu(p+k-1)} \cdots s_{\mu(k)} \imath_k, s_{\mu(k-1)} \cdots s_{\mu(0)} \imath_p ) \\
\ten g^k(s_{\mu(p+k-1)} \cdots s_{\mu(k)} \imath_k, s_{\mu(k-1)} \cdots s_{\mu(0)} \imath_p ) \end{gathered}\right), \]
where $\mu$ runs over the set of $(k,p)$ shuffles. Noting that
\[ e(s_{\mu(k-1)} \cdots s_{\mu(0)} \imath_p) =  e\circ s^{\mu(0)} \cdots s^{\mu(k-1)} \] when thought of as a map
\[ \Delta^{p+k} \to \K E\pi, \]
we apply $\coact_3$ from \eqref{E:act3} and the result is
\[ \sum_{\mu} \sum_r  d_{r+1} \cdots d_{p+k} \overline{ e s^{\mu(0)} \cdots s^{\mu(k-1)}}\ten d_0 \cdots d_{r-1}  \left(    e s^{\mu(0)} \cdots s^{\mu(k-1)}
\ten f^k(-)
\ten g^k(-) \right). \]
For a fixed $(k,p)$-shuffle $\mu$, we can examine the arguments to $e$, $f$, and $g$, noting that the action of $d_0 \cdots d_{r-1}$ is as follows
\[ \xymatrix{ \K (\Delta^k \times \Delta^p)_{k+p} \ar@{->}[d]^{d_0 \cdots d_{r-1}} \ar@{->}[r]^-{f^k \text{ or } g^k} & V_{k+p}^k \ar@{->}[d]^{d_0 \cdots d_{r-1}}  \\
\K (\Delta^k \times \Delta^p)_{k+p-r} \ar@{->}[r] & V_{k+p-r}^k} \]
and similarly for $e$. The argument to $e$ is then
\begin{equation}\label{E:Targumenttoe}
d_0 \cdots d_{r-1} s^{\mu(0)} \cdots s^{\mu(k-1)} = s^{\mu(0)} \cdots s^{\mu(k-1)} d^{r-1} \cdots d^0  \in \Delta_{p-r+k}^p
\end{equation}
while $f$ and $g$ have an additional argument
\begin{equation}\label{E:Targumenttofg}
d_0 \cdots d_{r-1} s_{\mu(p+k-1)} \cdots s_{\mu(k)} (\imath_k) = s^{\mu(k)} \cdots s^{\mu(p+k-1)} d^{r-1} \cdots d^0  \in \Delta_{p-r+k}^k.
\end{equation}

To compare this calculation to calculation~\ref{CALC:fiftyfour}, 
we can cut down the set of shuffles from $\shuff{k,p}$ to $\shuff{k,p-r}$.  Notice that
\[ d_{r+1} \cdots d_{p+k} \overline{ e s^{\mu(0)} \cdots s^{\mu(k-1)}} = \overline{e} s^{\mu(0)} \cdots s^{\mu(k-1)} d^{p+k} \cdots d^{r+1} \] will often be degenerate. We have
\[ s^{\mu(0)} \cdots s^{\mu(k-1)} d^{p+k} \cdots d^{r+1} (i) = \begin{cases} i & i \leq \mu(0) \\
i-t & i\in (\mu(t-1), \mu(t)] \\
i-k & i > \mu(k-1), \end{cases} \]
so the function \[ s^{\mu(0)} \cdots s^{\mu(k-1)} d^{p+k} \cdots d^{r+1}: [r] \to [p+k] \to [p] \]
is injective precisely when $r \leq \mu(0)$. If this is not injective then
\[ \overline{e} s^{\mu(0)} \cdots s^{\mu(k-1)} d^{p+k} \cdots d^{r+1} \] is degenerate, hence zero in $\bw$.

We thus only need to consider $(k,p)$-shuffles with $r \leq \mu(0)$, so that $r\leq p$ and
\[ \mu(k) = 0, \mu(k+1) = 1, \dots, \mu(k+r-1) = r-1. \]
The set of $(k,p)$ shuffles having this property is in bijection with $\shuff{k,p-r}$. The bijection
\[ \booeta: \setm{\mu\in \shuff{k,p}}{r \leq \mu(0)} \to \shuff{k,p-r} \] is given by
\[ \booeta(\mu) (t) = \begin{cases} \mu(t) -r & t \in [0 , k-1] \\
\mu(t+r)-r & t \in [ k, k+p-r-1] \end{cases} \]
and
\[ \booeta^\inv (\alpha) (t) = \begin{cases} \alpha(t) +r & t \in [0, k-1] \\
t-k & t\in [k, k+r -1] \\
\alpha(t-r) + r & t\in[ k+r , k+p-1]. \end{cases} \]

But then, if $\alpha = \booeta(\mu)$, the sequence $\alpha(0), \dots, \alpha(k-1)$ is equal to 
\[ \mu(0) - r, \dots, \mu(k-1) -r\]
and the sequence $\alpha(k), \dots, \alpha(p+k-r-1)$ is equal to
\[ \mu(k+r) -r, \dots, \mu(p+k-1)-r,\]
both from the definition of $\booeta$. This implies the following identities:
\begin{align}\label{E:sddseta}
s^{\mu(0)} \cdots s^{\mu(k-1)} d^{r-1} \cdots d^0 &= d^{r-1} \cdots d^0 s^{\alpha(0)} \cdots s^{\alpha(k-1)}  
\\ 
\label{E:sdseta}
s^{\mu(k)} \cdots s^{\mu(p+k-1)} d^{r-1} \cdots d^0 &= s^{\alpha(k)} \cdots s^{\alpha(p+k-r-1)}.
\end{align}

Using these for a fixed $\alpha$ and $\mu = \booeta^\inv(\alpha)$, (\ref{E:sddseta}) shows that (\ref{E:Targumenttoe}) = (\ref{E:argumenttoe}) while (\ref{E:sdseta}) shows that (\ref{E:Targumenttofg}) = (\ref{E:argumenttofg}). The arguments for the $\bw$ part, $\overline{e}$, are the same in each, so we have shown, as desired, that
\[ \coact_3 \phi_\infty N(\intalg) = (1\ten \phi_\infty) (1\ten N(\intalg)) \coact_2. \]

\end{proof}

\section*{Acknowledgements}
The author thanks Jim McClure for helpful discussions and suggestions concerning this paper, including the suggestion to think about this problem in the first place. 
The author thanks the referee for detailed and extensive feedback, which helped the author improve the paper considerably.
The author also thanks Sean Tilson for useful comments and questions.

\bibliographystyle{plain}
\bibliography{operations}
\end{document}